\def\PZdefchar#1{
  \expandafter\def\csname frak#1\endcsname{\mathfrak{#1}}
  \expandafter\def\csname bf#1\endcsname{\mathbf{#1}}
  \expandafter\def\csname scr#1\endcsname{\mathcal{#1}}
  \expandafter\def\csname cal#1\endcsname{\mathcal{#1}}}
\def\PZdefloop#1{\ifx#1\PZdefloop\else\PZdefchar#1\expandafter\PZdefloop\fi}
\newcommand{\R}{\mathbb{R}}
\newcommand{\N}{\mathbb{N}}
\newcommand{\dif}{\mathrm{d}}
\newcommand{\dist}{\operatorname{dist}}
\numberwithin{equation}{section}
\newtheorem{theorem}[equation]{Theorem}
\newtheorem{proposition}[equation]{Proposition}
\newtheorem{lemma}[equation]{Lemma}
\newtheorem{corollary}[equation]{Corollary}
\theoremstyle{definition}
\newtheorem{definition}[equation]{Definition}
\newtheorem{hypothesis}[equation]{Hypothesis}
\theoremstyle{remark}
\newtheorem{remark}[equation]{Remark}
\DeclarePairedDelimiter\abs{\lvert}{\rvert}
\DeclarePairedDelimiter\norm{\lVert}{\rVert}
\providecommand\given{}
\newcommand\SetSymbol[1][]{%
\nonscript\:#1\vert
\allowbreak
\nonscript\:
\mathopen{}}
\DeclarePairedDelimiterX\Set[1]\{\}{\renewcommand\given{\SetSymbol[\delimsize]}#1}
\DeclarePairedDelimiterXPP\EE[1]{\E}{\lparen}{\rparen}{}{\renewcommand\given{\SetSymbol[\delimsize]}#1} 
\newcommand\@avsum[2]{%
  {\sbox0{$\m@th#1\sum$}%
   \vphantom{\usebox0}%
   \ooalign{%
     \hidewidth
     \smash{\vrule height\dimexpr\ht0+1pt\relax depth\dimexpr\dp0+1pt\relax}%
     \hidewidth\cr
     $\m@th#1\sum$\cr
   }%
  }%
}
\newcommand{\avsum}{\mathop{\mathpalette\@avsum\relax}\displaylimits}
\newcommand\@avprod[2]{%
  {\sbox0{$\m@th#1\prod$}%
   \vphantom{\usebox0}%
   \ooalign{%
     \hidewidth
     \smash{\vrule height\dimexpr\ht0+1pt\relax depth\dimexpr\dp0+1pt\relax}%
     \hidewidth\cr
     $\m@th#1\prod$\cr
   }%
  }%
}
\newcommand{\avprod}{\mathop{\mathpalette\@avprod\relax}\displaylimits}
\newcommand{\@avL}[2]{%
\ooalign{{$\m@th#1\mbox{--}$}\cr {$\m@th#1 L$}\cr}}
\newcommand{\avL}{\mathpalette\@avL\relax}
\newcommand{\@avell}[2]{%
\ooalign{{$\m@th#1\mbox{--}$}\cr {$\m@th#1 \ell$}\cr}}
\newcommand{\avell}{\mathpalette\@avell\relax}
\newcommand{\@avD}{%
  \ooalign{{$\mathrm{D}$}\cr \hidewidth\raise.2ex\hbox{$\vert$}\hidewidth\cr}}
\newcommand{\avDec}{\@avD\mathrm{ec}}
\newcommand{\Dec}{\mathrm{Dec}}
\newcommand{\MulDec}{\mathrm{MulDec}}
\DeclareMathOperator{\supp}{supp}
\DeclareMathOperator{\rank}{rank}
\DeclareMathOperator{\diag}{diag}
\newcommand{\ED}{E^{\calD}}
\newcommand{\Part}[2][]{\calP(\ifstrempty{#1}{}{#1,}#2)} 
\newcommand{\Cov}[2][\R^{d+n}]{\calB(#1,#2)} 
\title[Decoupling for quadratic surfaces]{Decoupling for certain quadratic surfaces\\ of low co-dimensions}
\author{Shaoming Guo}
\address[SG]{Department of Mathematics\\ University of Wisconsin Madison\\ USA}
\email{shaomingguo@math.wisc.edu}
\author{Pavel Zorin-Kranich}
\address[PZ]{Mathematical Institute\\ University of Bonn\\ Germany}
\email{pzorin@math.uni-bonn.de}
\subjclass[2010]{42B25 (Primary) 11L15, 26D05 (Secondary)}
\begin{document}
\maketitle
\begin{abstract}
We prove sharp $\ell^{p}L^{p}$ decoupling inequalities for $2$ quadratic forms in $4$ variables.
We also recover several previous results \cite{MR3736493, MR3447712, MR3848437, MR3945730} in a unified way.
\end{abstract}

\section{Introduction}

Let $1\le n\le d$ be integers and $P_1, \dotsc, P_n$ real quadratic forms on $\R^{d}$.
We study decoupling inequalities associated to the quadratic surface
\begin{equation}
\calS = \calS_{d, n}:=\Set{(t, P_1(t), \dotsc, P_{n}(t)) \given t\in [0, 1]^d}.
\end{equation}
For a subset $R\subset [0, 1]^d$, define an extension operator
\begin{equation}\label{extension_operator}
E_R g(x):=\int_R g(t)e^{2 \pi i(t_1 x_1+\dots+t_d x_d+P_1(t)x_{d+1}+\dots P_n(t)x_{d+n})} \dif t,
\quad
x\in\R^{d+n}.
\end{equation}
For a ball $B=B(c_B, r_B)\subset \R^{d+n}$ and $E>0$, define an associated weight
\begin{equation}
w_{B, E}(x):=\Bigl( 1+\frac{\abs{x-c_B}}{r_B} \Bigr)^{-E}.
\end{equation}
Typically, $E$ is a fixed number that is much bigger than $(d+n)$, and will be omitted from the notation $w_{B, E}$.
All implicit constants are allowed to depend on $E$.
For $\delta \in 2^{-\N}$, we will denote by $\Part{\delta}$ the set of all dyadic cubes with side length $\delta$ in $[0,1]^{d}$.

\begin{theorem}\label{thm:main:extension}
Let $1\le n\le 3$ and
\begin{equation}\label{d_n_constraint}
\begin{cases}
d\ge 1 & \text{ if } n=1,\\
2 \leq d \leq 4 & \text{ if } n=2,\\
d=3 & \text{ if } n=3.
\end{cases}
\end{equation}
Assume that for every choice of linearly independent vectors $\vec{w}_{1}, \dotsc, \vec{w}_{d-n}\in \R^d$,
\begin{equation}\label{bl_assumption}
\det [\nabla P_1(t); \dots ; \nabla P_n(t); \vec{w}_{1}; \dots; \vec{w}_{d-n} ]
\not\equiv 0,
\end{equation}
as a polynomial in the variable $t$.
Assume in addition that, for every hyperplane $H\subset \R^d$,
\begin{equation}\label{low_dim_assumption}
\rank \Big(\big(\lambda_1 P_1+\dotsb+\lambda_n P_n\big)|_H\Big)\ge d-2
\end{equation}
for some $\lambda_{1}, \dotsc, \lambda_n\in \R$.

Let $2\le p\le 2+\frac{4n}{d}$, $\epsilon>0$, and $E>0$.
Then, for every locally integrable function $g$, every dyadic number $\delta \in 2^{-\N}$, and every ball $B\subset \R^{d+n}$ of radius $\delta^{-2}$, we have
\begin{equation}\label{eq:main:extension}
\norm{E_{[0, 1]^d}g}_{L^p(w_{B,E})}
\lesssim_{\epsilon,E}
\delta^{-d(\frac{1}{2}-\frac{1}{p})-\epsilon}
\Bigl( \sum_{\Delta \in \Part{\delta}} \norm{ E_{\Delta}g }_{L^p(w_{B,E})}^p \Bigr)^{1/p}.
\end{equation}
\end{theorem}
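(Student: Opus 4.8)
The plan is to establish the renormalised estimate $\Dec_{p}(\delta)\lesssim_{\epsilon}\delta^{-\epsilon}$ at the critical exponent $p_{c}:=2+\frac{4n}{d}$ by a Bourgain--Demeter--Guth induction on scales, where $\Dec_{p}(\delta)$ denotes the best constant in \eqref{eq:main:extension} after removing the factor $\delta^{-d(\frac12-\frac1p)}$; the range $2\le p\le p_{c}$ then follows by interpolation with the trivial $\ell^{2}L^{2}$ bound. After the usual reductions --- parabolic-rescaling submultiplicativity $\Dec_{p}(\delta_{1}\delta_{2})\lesssim\Dec_{p}(\delta_{1})\Dec_{p}(\delta_{2})$ and pigeonholing onto a sparse well-separated subfamily of caps --- one fixes a large $K$, partitions $[0,1]^{d}$ into cubes $\tau\in\Part{K^{-1}}$, and applies the Bourgain--Guth dichotomy at each point of $\R^{d+n}$: either $\abs{E_{[0,1]^{d}}g}$ is essentially the sum over the $O(1)$ caps $\tau$ within $O(K^{-1})$ of a single hyperplane $H=H(x)\subset\R^{d}$ (the \emph{narrow} case), or over a transverse collection of caps (the \emph{broad} case). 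Since the purely quadratic part of the phase is translation invariant, parabolically rescaling a cube of $\Part{K^{-1}}$ returns the same surface $\calS_{d,n}$, so the decoupling inside a fixed $\tau$ down to scale $\delta$ is exactly $\Dec_{p}(\delta K)$, and the broad and narrow inputs can be iterated over $\sim\log(1/\delta)/\log K$ generations.

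For the broad case I would run a ball-inflation argument. To each cap $\tau$, centred at $t_{\tau}$, attach the $n$-dimensional normal space of $\calS$ at $(t_{\tau},P_{1}(t_{\tau}),\dots,P_{n}(t_{\tau}))$, namely $\{(-\sum_{i}b_{i}\nabla P_{i}(t_{\tau}),\,b):b\in\R^{n}\}$, and let $\pi_{\tau}$ be the projection onto its $d$-dimensional orthogonal complement (the tangent space). Upgrading local $L^{2}$ averages of $E_{\tau}g$ from $\sigma^{-1}$-balls to $\sigma^{-2}$-balls, raised to weights $\theta_{\tau}$ and multiplied over a transverse tuple, is governed by the Brascamp--Lieb constant of the datum $\bigl((\pi_{\tau})_{\tau},(\theta_{\tau})_{\tau}\bigr)$. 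By the Bennett--Carbery--Christ--Tao criterion this constant is finite precisely when the scaling identity $\sum_{\tau}\theta_{\tau}\dim\pi_{\tau}(\R^{d+n})=d+n$ holds --- forcing $\sum_{\tau}\theta_{\tau}=1+\frac{n}{d}$, which is the source of the exponent $p_{c}=2+\frac{4n}{d}$ --- together with $\dim V\le\sum_{\tau}\theta_{\tau}\dim\pi_{\tau}(V)$ for every subspace $V\subset\R^{d+n}$. For a sufficiently transverse tuple this last family of inequalities can be reduced, using the graph form of the normal spaces, to lower bounds on the ranks of $P_{1},\dots,P_{n}$ and their real linear combinations restricted to coordinate-type subspaces of $\R^{d}$ --- which is exactly the content of \eqref{bl_assumption} (that the gradients $\nabla P_{i}(t)$ stay transverse to every fixed $(d-n)$-plane). \emph{I expect the main obstacle to be this verification: showing that \eqref{bl_assumption} makes all the required Brascamp--Lieb data feasible with the scaling exponents}; it is most efficiently organised in a ``Brascamp--Lieb for quadratic forms'' framework, in which one reduces to testing subspaces $V$ that are themselves graphs over coordinate subspaces.

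For the narrow case, with the relevant $\tau$'s inside a $K^{-1}$-neighbourhood of a hyperplane $H$, hypothesis \eqref{low_dim_assumption} furnishes $\lambda_{1},\dots,\lambda_{n}$ with $Q:=\sum_{i}\lambda_{i}P_{i}$ satisfying $\rank(Q|_{H})\ge d-2$. One decouples the surface over this slab into $\delta$-caps by: (a) Bourgain--Demeter's sharp $\ell^{p}L^{p}$ decoupling for the single non-degenerate form $Q|_{H}$ on $H\cong\R^{d-1}$ --- up to an affine change of variables a $(d-2)$-dimensional paraboloid times at most one flat direction, valid for $p\le 2+\frac{4}{d-2}$ --- in the $d-1$ directions along $H$; (b) a one-dimensional decoupling in the direction transverse to $H$, which --- choosing the transverse direction appropriately, since $P_{1},\dots,P_{n}$ cannot all vanish outside $H$ --- is a parabola decoupling, valid for $p\le 6$, and $p_{c}\le 6$ throughout the allowed range; and (c) a parabolic rescaling absorbing the remaining scales into $\Dec_{p}$. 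The role of \eqref{d_n_constraint} is that, given $1\le n\le\min(d,3)$, it is equivalent (for $d\ge3$) to $n(d-2)\le d$, i.e.\ to $2+\frac{4n}{d}\le 2+\frac{4}{d-2}$, so that step (a) covers the full range up to $p_{c}$: this holds with room to spare for $n=1$ (where \eqref{bl_assumption} already forces $\calS$ to be an affine paraboloid, and the theorem is Bourgain--Demeter), forces $2\le d\le4$ for $n=2$, and forces $d=3$ for $n=3$ --- reproducing \eqref{d_n_constraint} line by line. The cases $d\le2$, where \eqref{low_dim_assumption} is vacuous and step (a) degenerates, are base cases proven separately (elementary for $n=1$; recovering prior work for $n=d=2$), and for $d\ge3$ the induction is on the scale alone, all lower-dimensional inputs being black boxes. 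The genuine technical care in the narrow step lies in correctly decoupling into slabs parallel to $H$ before applying (a), and in carrying the weights $w_{B,E}$ through the rescalings, which costs only an adjustable power of $E$.

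Finally, combining the broad estimate (with its Brascamp--Lieb gain), the narrow estimate (with the sharp lower-dimensional constant), and the parabolic rescaling through the Bourgain--Guth inequality yields a recursive bound for $\Dec_{p}(\delta)$ which, iterated over $\sim\log(1/\delta)/\log K$ generations and optimised by letting $K\to\infty$ slowly with $1/\delta$, self-improves to $\Dec_{p}(\delta)\lesssim_{\epsilon}\delta^{-\epsilon}$ at $p=p_{c}$, hence \eqref{eq:main:extension}; the remaining range follows by interpolation with $\ell^{2}L^{2}$. Beyond the Brascamp--Lieb verification, the only further work is the standard bookkeeping of checking that the loss from the number of generations is defeated by the geometric gains and of tracking the weights under rescaling.
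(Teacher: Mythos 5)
Your overall architecture (parabolic rescaling, Bourgain--Guth dichotomy into broad/narrow, Brascamp--Lieb ball inflation for the broad part, lower-dimensional decoupling for the narrow part, iterate and optimize $K$) matches the paper's, and you correctly identify that the numerology $n(d-2)\le d$ behind \eqref{d_n_constraint} is what lets the $(d-2)$-dimensional paraboloid decoupling on $\calH'\subset H$ cover the full range up to $p_c=2+4n/d$ (this is exactly the check in the paper's Lemma~\ref{181212lem5.7}). However, there are two genuine gaps in the narrow case that the paper's proof is largely devoted to fixing.

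First, for $n>1$ the obstruction to transversality in the Bourgain--Guth dichotomy is \emph{not} a hyperplane. The failure of the BCCT condition $\dim V\le \frac{d+n}{dM}\sum_j\dim\pi_{t_j}(V)$ for some subspace $V$ is controlled by the vanishing of a minor determinant whose entries are linear in $t$, so the obstructing set is a subvariety $Z\subset\R^d$ of degree as large as $d$ (paper's Lemma~\ref{lem:transverse}). The reduction to a hyperplane is a special feature of $n=1$ (Lemma~\ref{lem:BCCT:n=1}). Your proposal has no mechanism for decoupling caps clustered near a degree-$d$ variety, and this is precisely where the paper invests real work: Lemma~\ref{lem:sublevel-decomposition-of-variety} covers $N_{1/K}(Z)$ by neighborhoods of finitely many graphs with controlled $C^2$ norm, at a carefully chosen hierarchy of scales $K_1\le\dots\le K_{D+1}=K$ with $\log K_j\approx\log K$, and Theorem~\ref{thm:dim-reduction-variety}/Corollary~\ref{cor:dim-reduction-variety} then applies lower-dimensional decoupling to each graph. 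Without this step the broad/narrow split does not close for $n\ge2$.

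Second, even in the genuinely hyperplane case, your step (b) --- ``a parabola decoupling in the direction transverse to $H$'' --- is the cylinder-fitting approach from earlier papers (fitting the $1/K$ slab into the $1/K^2$ neighborhood of a lower-dimensional surface times a curve), which the paper deliberately abandons because it requires delicate geometric computations that do not generalize, and is not even clearly available for generic $P_1,\dots,P_n$. The paper's Theorem~\ref{thm:near-hyperplane} avoids it entirely: it only uses the crude observation that the fiberwise Fourier support over each $x_d$ lies in a box adapted to scale $\delta^{1/2}$ (not $\delta$), applies the $(d-1)$-dimensional decoupling at scale $\delta^{1/2}$, rescales, and then runs an induction on scales $\delta\to\delta^{1/2}$ for $\sim\log\log(1/\delta)$ generations, picking up only a harmless $|\log\delta|^C$ factor. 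Your description does not contain this induction, and I do not see how your (b) produces the sharp exponent for arbitrary admissible $P_1,\dots,P_n$. Theorem~\ref{thm:near-graph} then bootstraps the hyperplane result to curved graphs by a second induction on scales (tangent plane approximation plus rescaling), again sidestepping any explicit geometry of the graph of $P$.

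A smaller inaccuracy: \eqref{bl_assumption} is not directly a rank condition on the $P_i$; it is a transversality condition on the gradients $\nabla P_i(t)$ against arbitrary $(d-n)$-tuples of vectors. The paper converts it into the needed lower bounds $\dim\pi_t(V)\ge H_1+H_2$ outside a proper subvariety in Lemma~\ref{lem:proj-dim-lower-bd}, and then case-checks in Lemmas~\ref{lem:BCCT:n=1}, \ref{180717lem5.2}, \ref{181203lemma6.4}. Your heuristic ``reduce to graphs over coordinate subspaces'' is in the right spirit, but the formal verification of Hypothesis~\ref{hyp:transverse} is where the constraints on $(d,n)$ actually show up via Lemma~\ref{181212lem5.7}, not via \eqref{bl_assumption} itself.
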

It would be very interesting to obtain sharp decoupling inequalities for other pairs of $(d, n)$. If one follows the approach in the current paper, one difficulty is in the linear algebra part of the proof, that is, how to have a better understanding of the Brascamp-Lieb transversality conditions. The constraint on $n$ and $d$ in the above theorem comes mainly from the proof of Lemma \ref{181212lem5.7}.

\subsection{Relation to previous work}
Theorem~\ref{thm:main:extension} unifies several previous results, which are summarized in the table below, and provides a new result when $n=2$ and $d=4$, which is a sharp decoupling for a large class of four dimensional quadratic surfaces in $\R^6$.
\begin{center}
\begin{tabular}{ccc}
  \toprule
  $n$ & $d$ & Reference\\
  \midrule
  $1$ & $\geq 1$ & \cite{MR3736493}\\
  $2$ & $2$ & \cite{MR3447712}\\
  $2$ & $3$ & \cite{MR3945730}\\
  $3$ & $3$ & \cite{MR3848437}\\
  \bottomrule
\end{tabular}
\end{center}
The arguments in the above listed papers are quite different from each other.
This point will be elaborated in Section \ref{section:overview}.
Let us first be more precise about how these results can be recovered.
Notice that we use cubes with side length $\delta$, while many articles use cubes with side length $\delta^{1/2}$.

When $n=1$, Bourgain and Demeter in \cite{MR3374964} and \cite{MR3736493} proved \eqref{eq:main:extension} for every (possibly hyperbolic) paraboloid $\Set{(t, P_1(t)) \given t\in [0, 1]^d}$ with non-vanishing Gaussian curvature.
In this case, the implication
\begin{equation}\label{eq:codim1-hypotheses}
\rank (P_{1}) = d
\implies
\eqref{bl_assumption} \text{ and } \eqref{low_dim_assumption}
\end{equation}
can be easily verified, see \cite[Lemma 2.6]{MR3736493}.

When $n=2$ and $d=2$, Bourgain and Demeter \cite{MR3447712} proved \eqref{eq:main:extension} for quadratic forms
\begin{equation}
P_1(t_1, t_2)= A_1 t_1^2+2A_2 t_1 t_2+ A_3 t_2^2,
\quad
P_2(t_1, t_2)= B_1 t_1^2+2B_2 t_1 t_2+ B_3 t_2^2
\end{equation}
under the assumption
\begin{equation}\label{181212e1.9}
\rank
\begin{bmatrix}
A_1, & A_2, & A_3\\
B_1, & B_2, & B_3
\end{bmatrix}
=2.
\end{equation}
Checking \eqref{bl_assumption} amounts to checking
\begin{equation}
\det
\begin{bmatrix}
A_1 t_1+ A_2 t_2, & A_2 t_1+A_3 t_2\\
B_1 t_1+ B_2 t_2, & B_2 t_1+B_3 t_2
\end{bmatrix}
\not\equiv 0,
\end{equation}
which follows immediately from \eqref{181212e1.9}.
The condition \eqref{low_dim_assumption} is vacuously true in this case, since $d-2=0$.

When $d=3$ and $n=2$, Demeter, Shi, and the first author \cite{MR3945730} proved \eqref{eq:main:extension} for two quadratic forms $P_1$ and $P_2$  under the assumption \eqref{bl_assumption} and the assumption that they do not share any common real factor.
Under these assumptions, to verify \eqref{low_dim_assumption}, we just need to notice that $P_1|_H$ and $P_2|_H$ cannot be simultaneously zero.
Let us also mention here that the method used in the current paper significantly simplifies the proof in \cite{MR3945730}.
For an explanation of the major differences, we refer to Section \ref{section:overview}.

When $d=n=3$, Oh \cite{MR3848437} proved \eqref{main_estimate} under the assumption \eqref{bl_assumption} and the assumption that $P_1|_H, P_2|_H$ and $P_3|_H$ do not vanish simultaneously for any hyperplane $H$.
In this case, our assumption \eqref{low_dim_assumption} is just a coordinate-invariant version of Oh's assumptions.

We would also like to point out that a few other sharp decoupling inequalities for quadratic surfaces not covered by Theorem~\ref{thm:main:extension} were proved in \cite{MR3614930, MR3994585,arxiv:1811.02207}.
In many of those cases $n>d$, so that our assumption \eqref{bl_assumption} would not make sense there.

\subsection{Necessity of hypotheses}
Next, let us explain the assumptions \eqref{bl_assumption} and \eqref{low_dim_assumption} in the case $d=4$ and $n=2$.
The assumption \eqref{low_dim_assumption} is a necessary condition for the desired sharp decoupling inequality.
If it is not satisfied, then there exists a hyperplane $H$ such that, for every $\lambda_1$ and $\lambda_2$, we have
\begin{equation}
\rank \Big(\big(\lambda_1 P_1+\lambda_2 P_2\big)|_H\Big)\le 1.
\end{equation}
This further implies that, after changes of variables in $\R^{d}$ and in $\R^{n}$, one can parameterize the restriction of the surface $\calS_{d, n}$ to the plane $H$ as
\begin{equation}
(t_1, t_2, t_3, A t_1^2, 0).
\end{equation}
The $\ell^{p}L^{p}$ decoupling exponent for the parabola $(t_{1},A t_{1})$ is at least $(\frac{1}{2}-\frac{1}{p})$, while the $\ell^{p}L^{p}$ decoupling exponents for the lines $(t_{2})$ and $(t_{3})$ are at least $2(\frac{1}{2}-\frac{1}{p})$, see \eqref{eq:dec-exponent-lower-bd}.
Using tensor products of corresponding examples, one can find a function $g$ that is supported near $H$, such that
\begin{equation}
\norm{E_{[0, 1]^4}g}_{L^p(w_{B^2})} \gtrsim \delta^{-5(\frac 1 2-\frac 1 p)}
\Bigl( \sum_{\Delta \in \Part{\delta}} \norm{E_{\Delta}g}_{L^p(w_{B^2})}^p \Bigr)^{1/p}.
\end{equation}
This violates the desired decoupling inequality \eqref{eq:main:extension}.

We do not know whether the assumption \eqref{bl_assumption} is necessary for the decoupling inequality \eqref{eq:main:extension} to hold.
However, our proof seems to suggest that it is a necessary condition to run the multilinear approach of Bourgain \cite{MR3038558} and Bourgain and Demeter \cite{MR3374964}.
This is indeed the case when $d=3$ and $n=2$, which is the case considered in \cite{MR3945730}.
More precisely, it is proven there that if the condition \eqref{bl_assumption} fails, then no matter how many and which points we pick on the surface, they will never be ``transverse'' in the sense of Definition~\ref{transversality}.

Theorem~\ref{thm:main:extension} applies to an important class of pairs of quadratic forms, namely those pairs of quadratic forms that are simultaneously diagonalizable.
\begin{lemma}\label{lem:simul-diag}
Let $2\le d\le 4$.
Write $t=(t_1, \dots, t_d)$.
Take two quadratic forms
\begin{equation}
P(t)=\sum_{j=1}^d a_j t_j^2 \text{ and } Q(t)=\sum_{j=1}^d b_j t_j^2.
\end{equation}
Assume that
\begin{equation}\label{180713e1.9}
\rank \begin{bmatrix}
a_i & a_j\\
b_i & b_j
\end{bmatrix}
= 2
\qquad\text{ for every } i\neq j.
\end{equation}
Then $P(t)$ and $Q(t)$ satisfy the assumptions of Theorem~\ref{main_theorem} (with $n=2$).
\end{lemma}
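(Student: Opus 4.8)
The plan is to verify the two geometric hypotheses \eqref{bl_assumption} and \eqref{low_dim_assumption} of Theorem~\ref{thm:main:extension} in the case $n=2$, $P_1=P$, $P_2=Q$; the dimension bound $2\le d\le 4$ is simply inherited from the theorem and requires no separate work. As a preliminary observation, note that \eqref{180713e1.9} already forces every vector $(a_j,b_j)\in\R^2$ to be nonzero (else the corresponding $2\times2$ matrix has a zero column) and forces the vectors $(a_1,b_1),\dots,(a_d,b_d)$ to be pairwise non-proportional.

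For \eqref{bl_assumption}, fix linearly independent $\vec w_1,\dots,\vec w_{d-2}\in\R^d$ and expand
\[
F(t):=\det\big[\nabla P(t);\,\nabla Q(t);\,\vec w_1;\,\dots;\,\vec w_{d-2}\big]
\]
in the standard basis. Since $\nabla P(t)=2\sum_j a_j t_j e_j$ and $\nabla Q(t)=2\sum_k b_k t_k e_k$, multilinearity and antisymmetry of the determinant give
\[
F(t)=4\sum_{1\le j<k\le d}(a_jb_k-a_kb_j)\,t_jt_k\,D_{jk},
\qquad D_{jk}:=\det\big[e_j;\,e_k;\,\vec w_1;\,\dots;\,\vec w_{d-2}\big].
\]
The monomials $t_jt_k$ with $j<k$ are linearly independent, so $F\equiv0$ would force $(a_jb_k-a_kb_j)D_{jk}=0$ for all $j<k$, hence, by \eqref{180713e1.9}, $D_{jk}=0$ for all $j\ne k$. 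But $D_{jk}\ne0$ precisely when the images of $e_j$ and $e_k$ in the two-dimensional quotient $\R^d/\operatorname{span}(\vec w_1,\dots,\vec w_{d-2})$ are linearly independent, and since the images of $e_1,\dots,e_d$ span that quotient, some $D_{jk}$ is nonzero. Therefore $F\not\equiv0$, which is \eqref{bl_assumption}.

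For \eqref{low_dim_assumption} I would first reduce to linear hyperplanes: the quantity there is the rank of the Hessian of the restricted quadratic polynomial, and this rank depends only on the direction of $H$, so we may assume $0\in H$, say $H=\nu^\perp$. The crucial elementary input is that restricting a quadratic form to a hyperplane decreases its rank by at most $2$; indeed, if $M$ is the matrix of the form, then $\operatorname{rad}(M|_H)\subseteq M^{-1}(H^\perp)$, a space of dimension at most $\dim\ker M+1$, whence $\operatorname{rank}(M|_H)\ge\operatorname{rank}(M)-2$. It therefore suffices to exhibit one pair $(\lambda_1,\lambda_2)$ with $\lambda_1P+\lambda_2Q=\sum_j(\lambda_1a_j+\lambda_2b_j)t_j^2$ of full rank $d$; this holds whenever $(\lambda_1,\lambda_2)$ is not orthogonal to any of the $d$ nonzero vectors $(a_j,b_j)$, and such $(\lambda_1,\lambda_2)$ exists because the ``bad'' set is a union of finitely many lines through the origin. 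With this single $\lambda$, valid for every $H$ at once, we obtain $\operatorname{rank}\big((\lambda_1P+\lambda_2Q)|_H\big)\ge d-2$, which is \eqref{low_dim_assumption}.

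I do not expect a genuine obstacle here; the two verifications are short pieces of linear algebra, and the only points demanding care are the correct reading of $\operatorname{rank}(\,\cdot\,|_H)$ for (possibly affine) hyperplanes and the expansion of $F$. One could also prove the stronger statement that for generic $\lambda$ the restriction has rank $d-1$ — this amounts to checking that the rational function $(\lambda_1,\lambda_2)\mapsto\sum_j\nu_j^2/(\lambda_1a_j+\lambda_2b_j)$ is not identically zero, which again follows from \eqref{180713e1.9} by evaluating the numerator (after clearing denominators) at $\lambda=(-b_j,a_j)$ — but the weaker bound above already suffices for the lemma.
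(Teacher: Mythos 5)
Your proof is correct, and for \eqref{low_dim_assumption} it takes a genuinely different and shorter route than the paper. For \eqref{bl_assumption} you do essentially what the paper does: expand the determinant by multilinearity, obtain coefficients $(a_jb_k-a_kb_j)D_{jk}$ on the monomials $t_jt_k$, and use \eqref{180713e1.9} to reduce to a contradiction with the spanning of the $2$-dimensional quotient. For \eqref{low_dim_assumption}, however, the paper works only with $d=4$ (citing \cite{MR3945730} for $d=3$ and calling $d=2$ trivial), and argues by contradiction: it assumes $\max\{\rank(P|_H),\rank(Q|_H)\}\le 1$ for some hyperplane $H$, brings a representing $3\times4$ matrix $L$ into a normal form, and eliminates cases via an explicit analysis of the $2\times2$ minors of $LM_1L^T$ and $LM_2L^T$. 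Your argument replaces all of that with two soft observations: restricting a quadratic form to any hyperplane $H=\nu^\perp$ drops its rank by at most $2$, because the radical of the restriction is $H\cap M^{-1}(\R\nu)$, a space of dimension at most $\dim\ker M+1$; and \eqref{180713e1.9} (indeed, only the weaker fact that each $(a_j,b_j)\neq 0$) supplies a single $(\lambda_1,\lambda_2)$ for which $\lambda_1P+\lambda_2Q=\sum_j(\lambda_1a_j+\lambda_2b_j)t_j^2$ is nondegenerate. This yields $\rank\bigl((\lambda_1P+\lambda_2Q)|_H\bigr)\ge d-2$ simultaneously for every $H$ and uniformly in $2\le d\le 4$, avoiding the paper's case split by dimension and its minor-by-minor contradiction argument. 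The paper's computation produces a more specific structural conclusion (it finds a contradiction already from $\lambda\in\{(1,0),(0,1)\}$), but your abstract argument is shorter, dimension-uniform, and more robust.
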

The same non-degeneracy condition \eqref{180713e1.9} also appeared in a recent work \cite{MR3652248} by Heath-Brown and Pierce, see Page 95 there. Lemma~\ref{lem:simul-diag} is proved in Appendix~\ref{sec:simul-diag}.

\subsection{Overview of the proof}\label{section:overview}
The proof of Theorem~\ref{eq:main:extension} follows the multilinear approach introduced in \cite{MR2860188} and further developed in \cite{MR3374964,MR3548534,MR3736493,MR3709122,MR3848437,MR3994585,arxiv:1811.02207}.
In Section~\ref{sec:gen}, we formulate this argument for general quadratic surfaces under a lower-dimensional inductive assumption (Hypothesis~\ref{hyp:lower-dim}) and a transversality assumption (Hypothesis~\ref{hyp:transverse}).
In Section~\ref{sec:spec}, we show that these assumptions are satisfied under the conditions \eqref{bl_assumption} and \eqref{low_dim_assumption}.

In the multilinear approach, one uses the Bourgain--Guth argument from \cite{MR2860188} to split the quantity that is to be estimated into a lower-dimensional part and a transversely multilinear part, see Section~\ref{sec:gen:multilinear} and Section~\ref{sec:gen:bourgain-guth}.
The appropriate notion of transversality was introduced in \cite{MR3548534, MR3614930, MR3709122} and is explained in Section~\ref{sec:gen:transverse}.
In Section~\ref{sec:ball-inflation} and Section~\ref{sec:bourgain-demeter}, we run a version of the Bourgain--Demeter iteration argument from \cite{MR3374964} to complete the proof conditionally on Hypothesis~\ref{hyp:lower-dim} and Hypothesis~\ref{hyp:transverse}.

The main new idea in Section~\ref{sec:gen} is the way how lower dimensional contributions are controlled in Section~\ref{sec:gen:lower-dim}.
Specifically, if there is no significant transverse contribution to $E_{[0,1]^{d}}g(x)$, then the main contribution comes from a $1/K$ neighborhood of a low degree subvariety.
In the simplest case, namely when this subvariety is a hyperplane, previous work relied on showing that its $1/K$ neighborhood lies in the $1/K^{2}$ neighborhood of a certain cylinder, see for instance \cite{MR3736493} and \cite{MR3945730}.
This step, if possible, usually involves a large amount of linear algebra calculations, see for instance \cite[Section 4]{MR3945730}.
In the current paper, we show that the step of fitting the $1/K$ neighborhood into the $1/K^{2}$ neighborhood of a cylinder is no longer necessary.
This is the content of Theorem~\ref{thm:near-hyperplane}.
The result for hyperplanes can be extended to graphs with controlled first and second order derivatives by an argument essentially due to Oh \cite{MR3848437}, see Theorem~\ref{thm:near-graph}.

We extend this result to arbitrary subvarieties in Theorem~\ref{thm:dim-reduction-variety}.
In the case of hypersurfaces $\calS$ generated by monomials, this was previously done in \cite{MR3994585,arxiv:1811.02207}.
However, the projection argument in those articles seems to be specific to monomials.
A major difficulty in the general case is how to treat singular points of the subvariety (or, more generally, regions where the curvature is high).
To this end, we cover a neighborhood of the subvariety by neighborhoods of a ``small'' number of graphs (with controlled first and second order derivatives), see Lemma~\ref{lem:sublevel-decomposition-of-variety}.
It is not difficult to imagine that different scales of neighborhoods have to be involved, in order not to use too many graphs.
These scales are called $K_1\ll\dots\ll K_{D+1}$ in Lemma~\ref{lem:sublevel-decomposition-of-variety}.
It is a very interesting phenomenon that in our proof we require $\log K_i\approx_{d, n, \epsilon} \log K_j$ for every $i\neq j$.
In particular, we are not allowed to pick, say, $K_2=2^{K_1}$.
This is important in the iteration in Section~\ref{sec:bourgain-demeter}.

In the end of the overview, let us make a few comments on the differences among proofs in
\cite{MR3374964, MR3736493, MR3447712, MR3945730, MR3848437}. The proofs in \cite{MR3374964, MR3736493} use a $(d+1)$-linear argument, based on the $(d+1)$-linear Loomis-Whitney inequality, while the proofs in \cite{MR3447712, MR3848437} use a bilinear argument, based on certain change of variables, and the proof in \cite{MR3945730} uses an $M$-linear argument with $M$ ranging in an interval of integers (the same as the current paper). The bilinear argument of \cite{MR3447712, MR3848437} is specific to the case $d=n$, that is the dimension of the surface of a half of the dimension of the total space. 

In terms of the Brascamp--Lieb data that are involved: In \cite{MR3945730} the Brascamp--Lieb data that are used are always simple, in the sense that a strict inequality can be achieved for every proper linear subspace $V$.
The Brascamp--Lieb data that appear in \cite{MR3374964, MR3736493} are non-simple.
However, as shown in the current paper (in particular Lemma \ref{lem:BCCT:n=1}), one only needs to use simple data for (hyperbolic) paraboloids.
On a more technical level, this is because the first alternative in Hypothesis~\ref{hyp:transverse} only occurs for the trivial subspace and the full space.
In contrast, in the case $d=n$, one always needs to invoke non-simple Brascamp--Lieb data.

Decoupling theorems are sometimes formulated for functions with Fourier support in $\calS$.
However, in order to use a lower-dimensional inductive assumption such as Hypothesis~\ref{hyp:lower-dim}, one needs a version of the decoupling inequality that holds for functions with Fourier support in a $\delta^{2}$-neighborhood of the surface $\calS$.
We find it convenient to use such a more general version throughout, thus avoiding some technical computations as e.g.\ in \cite[Section 5]{MR3592159}.
This more general form of the decoupling inequality is explained in Section~\ref{sec:fourier-support}.

\subsection{Relaxed Fourier support restriction}\label{sec:fourier-support}
In this section we formulate a decoupling inequality for functions with Fourier support in a neighborhood of the surface $\calS$.
Informally speaking, for every $\theta\in\Part{\delta}$, we consider functions $f_{\theta}$ whose Fourier support is inside a parallelepiped that contains the graph of $P$ over $\theta$ and is close to being smallest among all parallelepipeds with this property, up to a multiplicative factor.
This is illustrated in Figure~\ref{fig:Fourier-support} in the case $d=n=1$.

\begin{figure}
\newcommand{\figparabolicscaling}[1]{
\begin{scope}[black,cm={0.5,0,0,0.25,(0,0)}]
#1
\end{scope}
\begin{scope}[black,cm={0.5,0.5,0,0.25,(0.5,0.25)}]
#1
\end{scope}
}
\newcommand{\figunitball}{\draw (-1.5,-1.5) rectangle (1.5,1.5);}
\begin{center}
\begin{tabular}{cc}
\begin{tikzpicture}
\figunitball
\figparabolicscaling{\figunitball}
\figparabolicscaling{\figparabolicscaling{\figunitball}}
\end{tikzpicture} &
\begin{tikzpicture}
\figunitball
\figparabolicscaling{\figparabolicscaling{\figparabolicscaling{\figunitball}}}
\end{tikzpicture}
\end{tabular}
\end{center}
\caption{$\supp \widehat{f_{\theta}}$, $\theta \in \Part{\delta}$, in the case $d=n=1$.
  On the left, $\delta \in \Set{2^{0},2^{-2},2^{-2}}$.
  On the right, $\delta \in \Set{2^{0}, 2^{-3}}$.}
\label{fig:Fourier-support}
\end{figure}
We proceed with a formal definition.
For $\theta = a + \delta [0,1]^{d} \in \Part{\delta}$, let
\[
L_{\theta} :=
\begin{pmatrix}
\delta I_{d} & 0\\
0 & \delta^{2} I_{n}
\end{pmatrix}
\begin{pmatrix}
I_{d} & \nabla P(a)\\
0 & I_{n}
\end{pmatrix},
\text{ where }
\nabla P(a)
=
\begin{pmatrix}
\partial_{1} P_{1}(a) & \dots & \partial_{1} P_{n}(a)\\
\vdots & & \vdots \\
\partial_{d} P_{1}(a) & \dots & \partial_{d} P_{n}(a)
\end{pmatrix},
\]
and $I_{d}$ denotes the identity $d\times d$ matrix.
Let
\begin{equation}
\label{eq:Uncertainty-box}
\calU_{\theta} := (a,P(a)) + L_{\theta}^{*}([-C,C]^{d+n}),
\end{equation}
where $C$ is a large constant to be chosen in a moment.
The set $\calU_{\theta}$ is a parallelepiped whose projection onto $\R^{d}$ is a cube containing $\theta$ and that contains the graph of $P$ restricted to $\theta$.
We choose $C$ large enough, depending on the size of coefficients of $P$, to make these parallelepipeds nested, in the sense that
\begin{equation}
\label{eq:Uncertainty-box-nested}
2\theta \subseteq 2\theta'
\implies
\calU_{\theta} \subseteq \calU_{\theta'}.
\end{equation}
We will denote by $f_{\theta}$ an arbitrary function with $\supp \widehat{f_{\theta}} \subseteq \calU_{\theta}$.
In other words, $f_{\theta}$ is of the form $M_{\theta}f$, where $f$ is an arbitrary function on $\R^{d+n}$ with $\supp \widehat{f} \subset [-C,C]^{d+n}$, and
\[
M_{\theta}f(x,y) = e(a\cdot x + P(a) \cdot y) (f \circ L_{\theta})(x,y),
\quad
x\in\R^{d}, y\in\R^{n}.
\]
Here, $x, y$ and $(x, y)$ are treated as row vectors.

For each $\delta>0$, let the \emph{decoupling constant} $\Dec^{p}(\delta)$ be the smallest constant such that the inequality
\begin{equation}\label{eq:dec-const}
\norm[\big]{\sum_{\theta \in \Part{\delta}} f_{\theta}}_{L^p(\R^{d+n})}
\le
\Dec^{p}(\delta) (\sum_{\theta} \norm{f_{\theta}}_{L^p(\R^{d+n})}^p)^{1/p}
\end{equation}
holds for arbitrary functions $f_{\theta}$ with $\supp \widehat{f_{\theta}} \subseteq \calU_{\theta}$.

The decoupling constant depends on $d,n$ and $P_{1},\dotsc,P_{n}$, and we will sometimes indicate this dependence by subscripts when several different decoupling constants are involved.
We will also omit the exponent $p$ when there is only one such exponent involved.

\begin{remark}
The decoupling constant \eqref{eq:dec-const} also depends, in a monotonically increasing way, on the Fourier support parameter $C$.
This dependence is entirely harmless, as for dyadic $C'$ the decoupling constant at scale $\delta$ with parameter $C'C$ can be easily controlled by the decoupling constant at scale $C'\delta$ with parameter $C$.
The only important thing about the parameter $C$ is that it has to be kept constant throughout various inductive procedures.
\end{remark}

The operators $M_{\theta}$ come from an action of the group of transformations generated by translations and dilations of $\R^{d}$.
This makes parabolic scaling easy, that is, one can apply an affine change of variables in the Fourier space and prove 
\begin{lemma}[Parabolic scaling]
\label{lem:parabolic-scaling}
Under the above notation, we have 
\[\norm[\big]{\sum_{\theta \in \Part[Q]{\delta}} f_{\theta}}_{L^p}
\le
\Dec^{p}(\delta/\sigma) (\sum_{\theta} \norm{f_{\theta}}_{L^p}^p)^{1/p}
\]
for any dyadic numbers $0<\delta\leq\sigma\leq 1$ and any $Q \in \Part{\sigma}$.
\end{lemma}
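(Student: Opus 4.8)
The plan is to realize the cube $Q \in \Part{\sigma}$ as the image of the unit cube $[0,1]^d$ under an affine map, and to transport the decoupling inequality at scale $\delta/\sigma$ through this map using the structure of the operators $M_\theta$. Write $Q = b + \sigma[0,1]^d$. The affine map $\Phi(s) = b + \sigma s$ sends $[0,1]^d$ onto $Q$ and sends $\Part{\delta/\sigma}$ bijectively onto $\Part[Q]{\delta}$: a cube $\theta' = a' + (\delta/\sigma)[0,1]^d$ in $[0,1]^d$ corresponds to $\theta = \Phi(\theta') = (b+\sigma a') + \delta[0,1]^d$. First I would record how the surface transforms: the graph of $P$ over $Q$ is, up to an affine change of variables in $\R^{d+n}$, the graph over $[0,1]^d$ of the rescaled quadratic form $\tilde P(s) := P(b+\sigma s) = P(b) + \sigma\, s\cdot\nabla P(b) + \sigma^2 \tilde P_0(s)$, where $\tilde P_0$ is the purely quadratic part of $P$. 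Absorbing the constant and linear terms by an affine substitution in the frequency variable (which is harmless, since composing $\widehat f$ with an affine map only translates and linearly distorts Fourier support, and leaves $L^p$ norms unchanged up to a Jacobian constant that cancels on both sides), one is left with the quadratic form $\sigma^2 \tilde P_0$, and the further rescaling $y \mapsto \sigma^2 y$ in the last $n$ coordinates converts this into $\tilde P_0$ itself. Crucially, $\tilde P_0$ has the \emph{same} purely-quadratic part as $P$ (just scaled), so the decoupling constant for the rescaled problem at scale $\delta/\sigma$ is exactly $\Dec^p(\delta/\sigma)$ for the \emph{same} surface.

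The key point to verify carefully is that, under this composition of affine maps in $\R^{d+n}$, the uncertainty box $\calU_\theta$ attached to $\theta = \Phi(\theta') \in \Part[Q]{\delta}$ is carried \emph{exactly} (up to the fixed constant $C$, which is why the remark insists $C$ stay fixed) onto the uncertainty box $\calU_{\theta'}$ attached to $\theta' \in \Part{\delta/\sigma}$ for the rescaled surface. This is a direct matrix computation: the map $L_\theta$ factors as
\[
L_\theta =
\begin{pmatrix} \delta I_d & 0 \\ 0 & \delta^2 I_n \end{pmatrix}
\begin{pmatrix} I_d & \nabla P(b+\sigma a') \\ 0 & I_n \end{pmatrix},
\]
and since $\nabla P(b+\sigma a') = \nabla P(b) + \sigma \nabla \tilde P_0(a')$ (using that the gradient of the linear-plus-constant part is constant), one can peel off the shear by $\nabla P(b)$ and the diagonal scaling by $(\sigma I_d, \sigma^2 I_n)$ as a single affine map $T$ of $\R^{d+n}$ that depends only on $b$ and $\sigma$ (not on $\theta'$), and check that $T(\calU_{\theta'}^{\tilde P_0}) = \calU_\theta^P$ where $\calU_{\theta'}^{\tilde P_0}$ uses the factor $L_{\theta'} = \mathrm{diag}((\delta/\sigma)I_d,(\delta/\sigma)^2 I_n)\begin{pmatrix} I_d & \nabla\tilde P_0(a') \\ 0 & I_n\end{pmatrix}$. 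Equivalently, in terms of the operators, $M_\theta = M_b^{\sigma} \circ (M_{\theta'}^{\tilde P_0} \text{ pulled back through } T)$, so a decomposition $\sum_{\theta \in \Part[Q]{\delta}} f_\theta$ is, after applying $T^{-1}$ and the frequency substitution, literally a decomposition $\sum_{\theta' \in \Part{\delta/\sigma}} \tilde f_{\theta'}$ for the surface $\tilde P_0$, with $\|\tilde f_{\theta'}\|_{L^p}$ proportional to $\|f_\theta\|_{L^p}$ by a single $\theta$-independent Jacobian factor.

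Once this correspondence is set up, the proof is immediate: apply the definition of $\Dec^p(\delta/\sigma)$ (for the surface $\tilde P_0$, which has the same decoupling constant as $P$) to $\sum_{\theta'} \tilde f_{\theta'}$, then undo the changes of variables; the Jacobian factors on the two sides of the inequality are identical and cancel. I expect the only real obstacle to be bookkeeping: keeping track of which affine maps act in the spatial variable versus the frequency variable, and confirming that the constant $C$ in the definition of $\calU_\theta$ is genuinely preserved (not merely comparable) under $T$, so that no loss in $C$ accumulates. This is exactly the scenario anticipated in the remark preceding the lemma, and it is handled by the observation that translations and the diagonal parabolic dilations form a group under which the family $\{\calU_\theta\}$ is equivariant, with $C$ an invariant of the family.
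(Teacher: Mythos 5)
Your proposal is correct and is essentially an expanded version of the paper's one-line proof: the paper simply notes that $M_{\theta} = M_{Q}M_{\theta'}$ for the appropriate $\theta' \in \Part{\delta/\sigma}$, which is exactly the identity you arrive at after unwinding the affine change of variables; the rest (cancellation of the Jacobian, preservation of the uncertainty boxes $\calU_{\theta}$ with the same constant $C$, invariance of the quadratic form under parabolic rescaling) is implicit in that group identity. One small remark: since the $P_j$ are assumed to be quadratic \emph{forms} (homogeneous of degree $2$), your $\tilde P_0$ is literally $P$, so the careful separation of constant, linear, and quadratic parts is harmless extra caution rather than a necessary step.
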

\begin{proof}
This follows from the fact, for $\theta \in \Part[Q]{\delta}$, we have $M_{\theta} = M_{Q}M_{\theta'}$ for a suitable $\theta' \in \Part{\delta/\sigma}$.
\end{proof}
\begin{remark}[Local decoupling]\label{rem:global-to-local}
Let $\eta$ be a positive Schwartz function on $\R^{d+n}$ such that $\supp \hat{\eta} \subset B(0,c)$ and $\eta \geq 1$ on $B(0,1)$.
Let $B \subset \R^{d+n}$ be a ball of radius $\delta^{-2}$.
Then, applying \eqref{eq:dec-const} with the Fourier support parameter $C$ replaced by $C+c$ to functions $f_{\theta}\eta_{B}$, where $\eta_{B} := \eta(\delta^{2}(\cdot-c(B)))$, we obtain
\[
\norm[\big]{\sum_{\theta \in \Part{\delta}} f_{\theta}}_{L^p(B)}
\leq
\Dec^{p}(\delta) (\sum_{\theta} \norm{\eta_{B} f_{\theta}}_{L^p}^p)^{1/p}.
\]
By \cite[Section 4]{MR3592159}, this implies the localized estimate
\[
\norm[\big]{\sum_{\theta \in \Part{\delta}} f_{\theta}}_{L^p(w_{B})}
\lesssim
\Dec^{p}(\delta) (\sum_{\theta} \norm{f_{\theta}}_{L^p(w_{B})}^p)^{1/p}.
\]
Similarly, we can localize the rescaled decoupling inequality in Lemma ~\ref{lem:parabolic-scaling}.
In fact, we can localize that inequality further to ellipsoids of dimensions $\delta^{-2}\sigma$ ($d$ times) $\times \delta^{-2}$ ($n$ times), but this will not be necessary.
\end{remark}

By Remark~\ref{rem:global-to-local}, Theorem~\ref{thm:main:extension} will follow from the next result.
\begin{theorem}\label{main_theorem}
Let $1\le n\le 3$.
Assume \eqref{d_n_constraint}, \eqref{bl_assumption}, and \eqref{low_dim_assumption}.
Then
\begin{equation}\label{main_estimate}
\Dec^{p}(\delta)\lesssim_{\epsilon} \delta^{-d(\frac{1}{2}-\frac 1 p)-\epsilon}
\end{equation}
for every $2\le p\le 2+\frac{4n}{d}$ and every $\epsilon>0$.
\end{theorem}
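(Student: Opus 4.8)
The plan is to prove the estimate \eqref{main_estimate} by induction on the scale $\delta$, running the standard multilinear / Bourgain--Guth machinery formulated in Section~\ref{sec:gen}. The first step is to reduce the claim to a statement about a finite collection of inequalities at a single fixed scale: by the usual iteration-on-scales argument it suffices to prove, for some large but fixed parameter $K$ (a power of $2$), a single-step gain of the form $\Dec^{p}(\delta) \lesssim_{\epsilon} K^{O(1)} \delta^{-\epsilon_0} \max\bigl( \text{(lower-dimensional term)}, \text{(multilinear term at scale } K^{-1}\text{)}\bigr)$, and then to bootstrap. The two terms on the right are exactly what Hypothesis~\ref{hyp:lower-dim} (the lower-dimensional inductive input) and Hypothesis~\ref{hyp:transverse} (transversality) are designed to handle; so the bulk of the work is verifying that these two hypotheses hold for our pair $(d,n)$ under the assumptions \eqref{bl_assumption} and \eqref{low_dim_assumption}, which is the content of Section~\ref{sec:spec}.

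Concretely I would proceed as follows. First, apply the Bourgain--Guth decomposition (Section~\ref{sec:gen:bourgain-guth}): partition $[0,1]^d$ into caps of side $1/K$, and on a ball of radius $\delta^{-2}$ split $|E_{[0,1]^d}g(x)|$ into a contribution where the significant caps cluster inside the $1/K$-neighborhood of a low-degree subvariety, and a genuinely transverse contribution involving an $M$-tuple of caps with $M$ ranging over an interval of integers. For the transverse part, feed it into the ball-inflation estimate of Section~\ref{sec:ball-inflation} together with the Bourgain--Demeter iteration of Section~\ref{sec:bourgain-demeter}; the key analytic inputs here are the multilinear Kakeya / Brascamp--Lieb inequality, whose validity at the relevant exponents is guaranteed by Hypothesis~\ref{hyp:transverse} and the transversality Definition~\ref{transversality}. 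For the lower-dimensional part, invoke Theorem~\ref{thm:dim-reduction-variety} (built from Theorem~\ref{thm:near-hyperplane}, Theorem~\ref{thm:near-graph}, and Lemma~\ref{lem:sublevel-decomposition-of-variety}) to reduce to a decoupling inequality for the restriction of $\calS$ to a lower-dimensional variety, which by the lower-dimensional case of the theorem — i.e.\ Hypothesis~\ref{hyp:lower-dim}, established by an outer induction on $d$ using that \eqref{low_dim_assumption} passes to hyperplane sections — obeys the required bound with a small loss $K^{O(1)}$. Balancing the two contributions and absorbing $K^{O(1)}$ into $\delta^{-\epsilon}$ by taking $K$ a small power of $\delta^{-1}$ (here one uses $\log K_i \approx \log K_j$, as emphasized in the overview) closes the induction and yields \eqref{main_estimate} for the full range $2 \le p \le 2 + \tfrac{4n}{d}$.

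The main obstacle, as the authors themselves flag, is the linear-algebra verification that the Brascamp--Lieb transversality Hypothesis~\ref{hyp:transverse} is satisfied for every admissible $(d,n)$ in \eqref{d_n_constraint} — equivalently, checking that generic tuples of points on $\calS$ are transverse in the sense of Definition~\ref{transversality}, and that the Brascamp--Lieb datum arising from the Jacobians $[\nabla P_1(t); \dots; \nabla P_n(t)]$ together with the complementary frame satisfies the Brascamp--Lieb inequality at the right exponent. This is where condition \eqref{bl_assumption} (the non-vanishing determinant, ensuring genuine transversality is attainable) and \eqref{low_dim_assumption} (the rank-$(d-2)$ condition on hyperplane restrictions, controlling the lower-dimensional term and ruling out degenerate subspaces in the Brascamp--Lieb / Bennett--Carbery--Christ--Tao criterion) enter in an essential way, and the proof of the relevant structural lemma — the paper's Lemma~\ref{181212lem5.7} — is precisely the bottleneck that forces the restriction $n \le 3$ and the listed ranges of $d$. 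Everything else (the Bourgain--Guth split, ball inflation, the Bourgain--Demeter iteration, and the reduction of lower-dimensional contributions to varieties) is, granting Theorem~\ref{thm:dim-reduction-variety} and the two hypotheses, a by-now-standard argument that I would carry out following \cite{MR3374964, MR3736493}.
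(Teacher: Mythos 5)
Your outline follows the paper's own argument: deduce Theorem~\ref{main_theorem} from the conditional result Theorem~\ref{thm:dec:general} (itself built from the Bourgain--Guth split in Proposition~\ref{prop:bourgain-guth-arg}, ball inflation, and the Bourgain--Demeter iteration of Proposition~\ref{prop:bourgain-demeter}), and then verify Hypotheses~\ref{hyp:lower-dim} and \ref{hyp:transverse} in Section~\ref{sec:spec}. One misattribution is worth correcting: Lemma~\ref{181212lem5.7}, which you rightly single out as the source of the constraint \eqref{d_n_constraint}, verifies the \emph{lower-dimensional} Hypothesis~\ref{hyp:lower-dim} (using \eqref{low_dim_assumption}), not the transversality Hypothesis~\ref{hyp:transverse}; the latter is checked in Lemmas~\ref{lem:BCCT:n=1}, \ref{180717lem5.2}, and \ref{181203lemma6.4} via Lemma~\ref{lem:proj-dim-lower-bd} (using \eqref{bl_assumption}), and those lemmas place no restriction on $(d,n)$. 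The numerical bound $n(d-2)\le d$ in Lemma~\ref{181212lem5.7} arises because the hyperplane restriction is handled by finding a rank-$(d-2)$ codimension-one slice, applying the $n=1$ theorem in dimension $d-2$, and then flat-decoupling the remaining direction; this forces the exponent range $p\le 2+\frac{4n}{d}$ to sit inside $p\le 2+\frac{4}{d-2}$.
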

Theorem~\ref{main_theorem} will in turn follow directly from Theorem~\ref{thm:dec:general}, once the hypotheses of the latter result are verified in Section~\ref{sec:spec}.

\subsection{Sharpness of the exponents}\label{sec:sharpness}
We recall standard examples that show that, for $2 \leq p,q < \infty$, the $\ell^{q}L^{p}$ decoupling inequality
\[
\norm[\big]{\sum_{\theta \in \Part{\delta}} f_{\theta}}_{L^p}
\lesssim
\delta^{-\Lambda} (\sum_{\theta} \norm{f_{\theta}}_{L^p}^q)^{1/q}
\]
can only hold if
\begin{equation}\label{eq:dec-exponent-lower-bd}
\Lambda \geq \max \Bigl( d-\frac{d}{q}-\frac{d+2n}{p}, d\bigl(\frac12-\frac1q\bigr) \Bigr).
\end{equation}
Consider first $f_{\theta} = M_{\theta} f$, where $f$ is a fixed Schwartz function with $\hat{f}$ positive and compactly supported.
Then by scaling $\norm{f_{\theta}}_{p} \sim \delta^{-(d+2n)/p}$.
On the other hand, $\sum_{\theta} f_{\theta} \gtrsim \delta^{-d}$ on a fixed neighborhood of $0$.
It follows that $\Lambda \geq d-\frac{d}{q}-\frac{d+2n}{p}$.

Consider next $f_{\theta}(x)=\eta(\delta^{2} x) e^{2 \pi i c_{\theta} \cdot x}$, where $\eta$ is a Schwartz function with $\hat\eta$ compactly supported and $c_{\theta}$ is a point on the surface $\calS$ over $\theta$.
Then $\norm{f_{\theta}}_{p} \sim \delta^{-2(d+n)/p}$ and by H\"older's inequality and orthogonality
\begin{multline*}
\delta^{-2(d+n)(\frac12-\frac1p)} \norm{\sum_{\theta} f_{\theta}}_{p}
\sim
\norm{\eta(\delta^{2}\cdot)}_{\frac{1}{1/2-1/p}} \norm{\sum_{\theta} f_{\theta}}_{p}
\geq
\norm{\sum_{\theta} \eta(\delta^{2}\cdot) f_{\theta}}_{2}\\
\gtrsim
\bigl( \sum_{\theta} \norm{\eta(\delta^{2}\cdot) f_{\theta}}_{2}^{2} \bigr)^{1/2}
\sim
\delta^{-d/2}\delta^{-2(d+n)/2}.
\end{multline*}
It follows that $\Lambda \geq d\bigl(\frac12-\frac1q\bigr)$.

\begin{remark}
It is known from \cite[p.~118]{MR1209299} that the $\epsilon$ loss in \eqref{main_estimate} cannot be completely removed in general.
\end{remark}

\subsection*{Notation}
$\Part[Q]{\delta}$ is the partition of a dyadic cube $Q$ into dyadic cubes with side length $\delta$.
We omit $Q$ if $Q=[0,1]^{d}$.

We use $C$ to denote a large constant that is allowed to change from line to line.
Its precise value is of no relevance. The letter $\Lambda$ is used throughout the paper, see Hypothesis \ref{hyp:lower-dim} for its meaning. 

For a sequence of real numbers $\{A_i\}_{i=1}^M$, we abbreviate $\avprod A_{i} := \bigl(\prod_{i=1}^{M} A_{i}\bigr)^{1/M}$.
Also, we define averaged integrals: 
\[
\norm{f}_{\avL^{p}(B)}:=(\frac{1}{|B|}\int_B |f|^p )^{1/p} \text{ and } \norm{f}_{\avL^{p}(w_B)}:=(\frac{1}{|B|}\int |f|^p w_B)^{1/p}.
\]

For $\sigma>0$ and $E\subset \R^d$, we will use $N_{\sigma}(E)$ to denote the $\sigma$-neighborhood of the set $E$.

For a non-negative number $a$, we will use $\lfloor a\rfloor$ to denote the greatest integer less than or equal to $a$, and $\lceil a\rceil$ to denote the least integer greater than or equal to $a$.

\subsection*{Acknowledgment}
S.G.\ was supported in part by a direct grant for research from the Chinese University of Hong Kong (4053295).
P.Z.\ was partially supported by the Hausdorff Center for Mathematics in Bonn (DFG EXC 2047).
He would also like to thank Po-Lam Yung for inviting him to the Chinese University of Hong Kong, where part of this work was conducted. Both authors thank the referee for reading the paper carefully and providing useful feedbacks that improved the exposition of the paper.

\section{General surfaces}\label{sec:gen}

\subsection{Lower dimensional decoupling}\label{sec:gen:lower-dim}

Let $\calH\subset \R^d$ be a hyper-plane that intersects $[0, 1]^d$.
Without loss of generality, we write it as a graph
\begin{equation}\label{181212e4.2}
t_d=\calL(t') \text{ where } t'=(t_1, \dots, t_{d-1}),
\end{equation}
and $\calL(t')$ is a linear form of $t'$ with $\abs{\nabla \calL}\lesssim 1$.
Consider the new quadratic forms $P_{j}'(t') := P_{j}(t',\calL(t'))$, $j=1,\dotsc,n$, and define the associated decoupling constant $\Dec_{\calH}^{p}(\delta)$ analogously to \eqref{eq:dec-const}.
Moreover, the index $p$ will be dropped from the notation $\Dec_{\calH}^{p}(\delta)$ whenever it is clear from the context which $p$ we are using. 

\begin{theorem}\label{thm:near-hyperplane}
Suppose that $\Dec_{\calH}^{p}(\delta) \lesssim \delta^{-\Lambda}$.
Then, for every $c<\infty$ and $\delta \in 2^{-\N}$, we have
\begin{equation}
\label{eq:hyperplane-decoupling}
\norm[\Big]{\sum_{\substack{\Box \in \Part{\delta},\\ c\Box\cap \calH\neq \emptyset}} f_{\Box} }_{p}
\lesssim_c
\abs{\log \delta}^{C_{\calS}} \delta^{-\Lambda}
\Big(\sum_{\Box} \norm{f_{\Box}}^p_{p}\Big)^{1/p},
\end{equation}
for a constant $C_{\calS}\lesssim 1$ that depends only on the surface $\calS$.
\end{theorem}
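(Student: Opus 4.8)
The plan is to decouple the functions $f_\Box$ into those $f_\theta$ that genuinely live near the graph of $P$ over $\calH$. The key observation is that the hyperplane $\calH\subset\R^d$, being flat, contributes nothing to the curvature in the normal direction $t_d-\calL(t')$. Concretely, I would first note that the restriction $P_j' := P_j(t',\calL(t'))$ are quadratic forms in $t'\in[0,1]^{d-1}$, and the graph of $P'=(P_1',\dots,P_n')$ over a $(d-1)$-dimensional cube $\theta'$ is, after the affine change of variables sending $\calH$ to a coordinate hyperplane, exactly the surface governing $\Dec_{\calH}$. So the hypothesis $\Dec_{\calH}^p(\delta)\lesssim\delta^{-\Lambda}$ is precisely a decoupling statement for the $(d-1)$-dimensional surface $\calS_{\calH}$.

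\smallskip

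Next I would set up the geometry. Write $t_d=\calL(t')+s$, so $s$ measures the (signed) distance to $\calH$ in the last coordinate. The condition $c\Box\cap\calH\ne\emptyset$ forces $|s|\lesssim_c\delta$ on $\Box$. The main point is a \emph{flat decoupling in the $s$-direction with only a logarithmic loss}: for fixed $t'$-cube $\theta'\in\Part[{[0,1]^{d-1}}]{\delta}$, the at most $O_c(1)$ cubes $\Box$ lying over $\theta'$ and meeting $c\calH$ stack in the $s$-variable over an interval of length $O_c(\delta)$; but more usefully, one iterates a \emph{bilinear-free, purely $L^p$} flat decoupling across the $1/\delta$ possible positions of $s$ at scale $\delta$. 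Here is where the logarithmic factor $|\log\delta|^{C_\calS}$ enters: running the trivial flat decoupling $\|\sum_{j} g_j\|_p \le (\#)^{1-1/p}(\sum\|g_j\|_p^p)^{1/p}$ scale-by-scale, from scale $1$ down to scale $\delta$, through $\approx|\log\delta|$ dyadic scales, and at each scale paying a constant (independent of $\delta$) because at each scale only $O_c(1)$ new pieces are being combined — this is the standard ``flat decoupling with log loss'' used when the surface is a cylinder over a lower-dimensional surface. The number of scales is $O(|\log\delta|)$ and the per-scale constant depends only on $c$ and the coefficients of $P$, so the product is $|\log\delta|^{C_\calS}$.

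\smallskip

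Putting these together: project everything to the $(t',s)$-coordinates, apply $\Dec_{\calH}^p(\delta)\lesssim\delta^{-\Lambda}$ in the $t'$-variables (this handles the genuine curvature), and apply the logarithmic flat decoupling in the $s$-variable (this handles the flat normal direction and costs only $|\log\delta|^{C_\calS}$). One has to be slightly careful about the order: I would first freeze $s$ at scale $\delta$ (flat-decouple in $s$), then inside each $s$-slab the remaining $f_\Box$ are supported on boxes $\calU_\theta$ whose projections to the $\calH$-coordinates are exactly the boxes for the surface $\calS_\calH$ at scale $\delta$, so $\Dec_\calH$ applies verbatim. A standard Fubini/Minkowski argument interchanges the two $L^p$-norms, and the nestedness property \eqref{eq:Uncertainty-box-nested} of the parallelepipeds $\calU_\theta$ guarantees that the restriction of $f_\Box$ to a hyperplane-slab has the correct Fourier support.

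\smallskip

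The main obstacle I expect is \textbf{matching the Fourier supports} $\calU_\Box$ with the uncertainty boxes associated to $\Dec_\calH$: the box $\calU_\Box$ is a parallelepiped in $\R^{d+n}$ adapted to the full surface $\calS$ via the linearization $L_\Box$, and one must verify that its image under the projection killing the $s$-direction is comparable (up to the fixed constant $C$, possibly enlarged) to the uncertainty box $\calU_{\theta'}$ for the lower-dimensional surface $\calS_\calH$ — including the crucial fact that the \emph{second-order} information in the $s$-direction is negligible at scale $\delta^2$, so that the $s$-direction really does behave like a flat line rather than contributing curvature. This is where the assumption $|\nabla\calL|\lesssim1$ is used, and where one has to track how the constant $C$ in \eqref{eq:Uncertainty-box} must be taken larger (by a factor depending only on $\calS$) for the lower-dimensional decoupling; by the remark following \eqref{eq:dec-const} this enlargement is harmless. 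The rest is bookkeeping: summing the $O_c(1)$ boxes over each $\theta'$ trivially, and collecting the two sources of loss into $|\log\delta|^{C_\calS}\delta^{-\Lambda}$.
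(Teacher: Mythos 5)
There is a genuine gap, and it is precisely the point you flag as ``the main obstacle'' but then dismiss as a harmless constant--factor enlargement of $C$. It is not harmless. If $\Box\in\Part{\delta}$ with $\Box\cap\calH\neq\emptyset$ and $\Box'\in\Part[{[0,1]^{d-1}}]\delta$ is its projection, then the projection of $\calU_\Box$ onto $\R^{d-1}\times\R^n$ is only contained in an $O(\delta)$-neighborhood of $\calU_{\Box'}$. But $\calU_{\Box'}$ has thickness $\delta^2$ in the $n$ surface--normal directions, so absorbing an $O(\delta)$-fattening into a dilate $\calU_{\Box'}^{(C')}$ would require $C'\gtrsim 1/\delta$, which is a scale-dependent enlargement, not a constant one. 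Therefore $\Dec_\calH(\delta)$ cannot be applied ``verbatim'' at scale $\delta$ inside each $s$-slab, and the core of your plan breaks down at exactly the step you identify as delicate.

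Relatedly, the mechanism you invoke for the logarithmic loss does not produce a logarithm. Over a fixed $\theta'\in\Part[{[0,1]^{d-1}}]\delta$ there are only $O_c(1)$ cubes $\Box$ meeting $c\calH$, so there is no meaningful flat decoupling to be done in the $s$-direction; and if one did iterate a constant-loss step through $\approx\abs{\log\delta}$ dyadic scales, the cumulative cost would be $C^{\abs{\log\delta}}=\delta^{-\log C}$, a polynomial loss, not $\abs{\log\delta}^{C_\calS}$.

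What the paper does instead is to apply $\Dec_\calH$ fiberwise only at the coarse scale $\delta^{1/2}$, where the $O(\delta)$-fattening \emph{is} absorbable (since $\calU_{\theta'}$ for $\theta'\in\Part{\delta^{1/2}}$ has thickness $\delta$ in the normal directions), then to rescale each $\theta'$ to unit scale by parabolic rescaling (Lemma~\ref{lem:parabolic-scaling}), which reproduces the same near-hyperplane quantity at scale $\delta^{1/2}$. Writing $A(\delta)$ for the best constant in \eqref{eq:hyperplane-decoupling}, this yields the recursion $A(\delta)\lesssim\delta^{-\Lambda/2}A(\delta^{1/2})$, which after $\approx\log\log(1/\delta)$ iterations---each paying an absolute constant---gives $A(\delta)\lesssim\abs{\log\delta}^{C}\delta^{-\Lambda}$. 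The $\log\log$ count of scale-halving steps, rather than the $\log$ count of dyadic flat-decoupling steps, is what makes the loss polylogarithmic. You should rework the argument along these lines: the insight you need is that the Fourier-support mismatch forces you to lower-dimensional decoupling at the \emph{square root} of the target scale, and the rest is an induction on scales.
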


In previous work of Bourgain and Demeter \cite[Section 2]{MR3736493} and of Demeter, Shi, and the first author \cite[Section 4]{MR3945730}, similar results were obtained in certain special cases ($d$ arbitrary, $n=1$ and $d=3,n=2$, respectively).
In the language of the proof below, the idea was to make a projection after which the Fourier support fits into an $O(\delta^{2})$ neighborhood of some lower dimensional surface.
In this situation, one can obtain \eqref{eq:hyperplane-decoupling} by applying the lower-dimensional decoupling fiberwise at scale $\delta$.
Our proof shows that, using an additional induction on scales, \eqref{eq:hyperplane-decoupling} can be obtained without investigating the ``geometry'' of the graph of $P$.

\begin{proof}[Proof of Theorem~\ref{thm:near-hyperplane}.]
Using \eqref{eq:Uncertainty-box-nested}, we assign each $\Box$ with $c\Box \cap \calH \neq \emptyset$ to a cube $\tilde{\Box}$ of side length $C'\delta$ such that $\tilde{\Box} \cap \calH \neq \emptyset$, $\calU_{\Box} \subseteq \calU_{\tilde{\Box}}$, and $C'$ depends only on $c$.
Since we assign boundedly many $\Box$ to each $\tilde{\Box}$, we may assume $c=1$.

For notational simplicity, we assume $\calH = \Set{ t \given t_{d}=0 }$.
Let $P'(t'):=P(t',0)$.

Let $A(\delta)$ be the smallest constant for which the inequality
\[
\norm[\Big]{\sum_{\substack{\Box \in \Part{\delta},\\ \Box\cap \calH\neq \emptyset}} f_{\Box} }_{L^p}
\leq A(\delta)
\Big( \sum_{\Box} \norm{f_{\Box}}^p_{L^p}\Big)^{1/p}
\]
holds.
It is easy to see that $A(\delta) \lesssim \delta^{-C}$ for some large $C$.

Let $\Box \in \Part{\delta}$ with $\Box \cap \calH \neq \emptyset$ and $\xi = (\xi',\xi_{d}) \in \Box$. 
Then, since $\abs{\nabla P} \lesssim 1$ on $B(0,C)$, the projection of $L_{\Box}^{*}([-C,C]^{d+n})$ onto $\R^{d-1}\times \R^{n}$ is contained in a $O(\delta)$-neighborhood of $L_{\Box'}^{*}([-C,C]^{d-1+n})$, where $\Box'$ is the projection of $\Box$ onto $\R^{d-1}$.

For each fixed $x_{d}$, this gives a restriction on the fiberwise Fourier support restriction $\widehat{f_{\Box}(\cdot,x_{d},\cdot)}$ that is not strong enough to apply decoupling at scale $\delta$, but is sufficient to apply decoupling at scale $\delta^{1/2}$.
Hence, we obtain
\begin{multline*}
\norm[\Big]{\sum_{\substack{\Box \in \Part{\delta},\\ \Box\cap \calH\neq \emptyset}} f_{\Box} }_{L^p(\R^{d-1} \times \Set{x_{d}} \times \R^{n})}\\
\lesssim
\Dec_{\calH}(c\delta^{1/2})
\Big( \sum_{\substack{\Box'\in\Part{\delta^{1/2}}, \\ \Box'\cap \calH\neq \emptyset}}\norm{ \sum_{\substack{\Box \in \Part[\Box']{\delta},\\ \Box\cap \calH\neq \emptyset}} f_{\Box} }^p_{L^p(\R^{d-1} \times \Set{x_{d}} \times \R^{n})}\Big)^{1/p}
\end{multline*}
for every $x_{d}$.
Integrating in $x_{d}$, we obtain
\[
\norm[\Big]{\sum_{\substack{\Box \in \Part{\delta},\\ \Box\cap C\calH\neq \emptyset}} f_{\Box} }_{p}
\lesssim
\delta^{-\Lambda/2}
\Big( \sum_{\substack{\Box'\in\Part{\delta^{1/2}}, \\ \Box'\cap \calH\neq \emptyset}}\norm{ \sum_{\substack{\Box \in \Part[\Box']{\delta},\\ \Box\cap \calH\neq \emptyset}} f_{\Box} }^p_{p}\Big)^{1/p}.
\]
By Lemma~\ref{lem:parabolic-scaling}, we have
\[
\norm{ \sum_{\substack{\Box \in \Part[\Box']{\delta},\\ \Box\cap \calH\neq \emptyset}} f_{\Box} }_{p}
\leq A(\delta^{1/2})
\Big( \sum_{\substack{\Box \in \Part[\Box']{\delta},\\ \Box\cap \calH\neq \emptyset}} \norm{ f_{\Box} }_{p}^{p} \Big)^{1/p}.
\]
It follows that
\[
A(\delta) \lesssim \delta^{-\Lambda/2} A(\delta^{1/2}).
\]
Iterating this inequality approximately $\log\log \frac{1}{\delta}$ times, we obtain the claim.
\end{proof}

Next, we will prove a version of Theorem~\ref{thm:near-hyperplane} for curved hypersurfaces.
\begin{hypothesis}\label{hyp:lower-dim}
Suppose that, for every hyperplane $\calH \subset \R^{d}$ passing through $0$, we have $\Dec_{\calH}^{p}(\delta) \lesssim \delta^{-\Lambda}$, uniformly in $\calH$.
\end{hypothesis}

By an affine change of coordinates in the Fourier space similar to Lemma~\ref{lem:parabolic-scaling}, Hypothesis \ref{hyp:lower-dim} implies the superficially stronger statement that  $\Dec_{\calH}^{p}(\delta) \lesssim \delta^{-\Lambda}$, uniformly in all hyperplanes $\calH$ that have non-empty intersection with the unit cube.

In the situation of Theorem~\ref{main_theorem}, Hypothesis~\ref{hyp:lower-dim} will be verified in Section~\ref{sec:spec:lower-dim} for an appropriate exponent $\Lambda$, depending on $d$ and $n$.

\begin{theorem}\label{thm:near-graph}
Let $2\leq p < \infty$ and assume Hypothesis~\ref{hyp:lower-dim}.
Then, for every $\epsilon>0$, every $\tilde{C} < \infty$, and every hypersurface $\widetilde{\calH}\subset [0,1]^d$ that can be written as a graph
\begin{equation}\label{181126e4.9}
t_d=\calL(t') \text{ with }
\norm{\calL}_{C^{2}} \leq \tilde{C},
\end{equation}
we have
\begin{equation}\label{181209e4.11}
\norm[\Big]{\sum_{\substack{\Box\in\Part{\delta},\\ \Box\cap \widetilde{\calH}\neq \emptyset}}f_{\Box} }_{p}
\lesssim_{\epsilon,\tilde{C}}
\delta^{-\Lambda-\epsilon}
\Big( \sum_{\Box}\norm{f_{\Box} }^p_{p}\Big)^{1/p}
\end{equation}
for every $0<\delta\le 1$.
The implicit constant in \eqref{181209e4.11} may depend on the bound for the $C^{2}$ norm in \eqref{181126e4.9}, but not otherwise on $\widetilde{\calH}$.
\end{theorem}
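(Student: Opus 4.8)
The plan is to reduce the curved case to the flat case of Theorem~\ref{thm:near-hyperplane} by chopping the graph $\widetilde{\calH}$ into small pieces over which it is well-approximated by an affine hyperplane, and then to pay for the error in this approximation by an induction on scales. Concretely, I would fix a (large dyadic) parameter $K$ and partition $[0,1]^{d-1}$ into cubes $\tau$ of side length $1/K$. Over each $\tau$, Taylor expansion of $\calL$ at the center of $\tau$ together with the bound $\norm{\calL}_{C^{2}}\le\tilde C$ shows that $\widetilde{\calH}$ restricted to $\tau\times\R$ lies within $O(\tilde C/K^{2})$ of the affine hyperplane $\calH_{\tau}$ tangent to $\widetilde{\calH}$ at that center. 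Hence any $\Box\in\Part{\delta}$ with $\Box\cap\widetilde{\calH}\ne\emptyset$ and $\Box$ sitting over $\tau$ also satisfies $c\Box\cap\calH_{\tau}\ne\emptyset$ provided $\delta\lesssim 1/K^{2}$ — which we may assume, since for $\delta\gtrsim K^{-2}$ the number of scales between $\delta$ and $1$ is bounded in terms of $K$ and the estimate is trivial with a loss absorbed into $\delta^{-\epsilon}$.

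The first step is then: for each $\tau$, apply Theorem~\ref{thm:near-hyperplane} to the affine hyperplane $\calH_{\tau}$ (after the harmless affine change of Fourier coordinates noted right after Hypothesis~\ref{hyp:lower-dim}, so that Hypothesis~\ref{hyp:lower-dim} supplies $\Dec_{\calH_\tau}(\delta)\lesssim\delta^{-\Lambda}$ uniformly in $\tau$). This gives
\[
\norm[\Big]{\sum_{\substack{\Box\in\Part[\tau\times\R]{\delta},\\\Box\cap\widetilde{\calH}\ne\emptyset}}f_{\Box}}_{p}
\lesssim
\abs{\log\delta}^{C_{\calS}}\delta^{-\Lambda}
\Big(\sum_{\Box}\norm{f_{\Box}}_{p}^{p}\Big)^{1/p}.
\]
The second step is to recombine the $K^{d-1}$ pieces $\tau$. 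Here one observes that the relevant frequency boxes $\calU_{\Box}$ for $\Box$ over different $\tau$ all project into a $1/K$-cube in the first $d-1$ variables and into slabs of thickness $O(1/K)$ in the remaining variables, so the collection $\{\tau\}$ is itself the index set of a decoupling problem at scale $1/K$ for the \emph{lower-dimensional} surface $\calS' = \{(t',P'(t'))\}$; more precisely the functions $g_{\tau}:=\sum_{\Box\subset\tau\times\R}f_{\Box}$ have Fourier support in $\calU'_{\tau}$-type boxes, so Hypothesis~\ref{hyp:lower-dim} (applied to a hyperplane containing $\calH$, or directly via the definition of $\Dec_{\calH}$) bounds $\norm{\sum_{\tau}g_{\tau}}_p$ by $(1/K)^{-\Lambda}$ times $\ell^p$ of the $\norm{g_\tau}_p$. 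Combining the two steps and then using parabolic scaling (Lemma~\ref{lem:parabolic-scaling}) to pass back from scale $1/K$ inside $\tau$ to the global scale, one arrives at a recursive inequality roughly of the shape
\[
A(\delta)\lesssim \abs{\log\delta}^{C_{\calS}}\, K^{O(1)}\, (1/K)^{-\Lambda}\, A(K^{2}\delta),
\]
where $A(\delta)$ denotes the best constant in \eqref{181209e4.11}. Iterating about $\log(1/\delta)/\log K$ times and then optimizing in $K$ (choosing $K$ a large constant depending on $\epsilon$, or letting $K$ be a slowly growing function of $1/\delta$) yields $A(\delta)\lesssim_{\epsilon,\tilde C}\delta^{-\Lambda-\epsilon}$, which is the claim.

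The main obstacle I expect is the bookkeeping in the recombination step: one must check that the box $\calU_{\Box}$ associated to $\Box$ over $\tau$ genuinely fits, up to the fixed constant $C$, into the product of the scale-$1/K$ box $\calU'_{\tau}$ (for the lower surface) in the $(t',y)$-variables with an interval of length $O(1)$ in the $t_d$-direction — this is where the $C^{2}$-bound on $\calL$ and the nestedness property \eqref{eq:Uncertainty-box-nested} are used, and where one must be careful that the slab geometry in the last $n$ coordinates (which carry the quadratic forms, hence curvature at scale $\delta^2$) is compatible with viewing $\{\tau\}$ as a scale-$1/K$ decoupling family. A secondary technical point is ensuring the extra $\abs{\log\delta}^{C_{\calS}}$ and $K^{O(1)}$ factors from Theorem~\ref{thm:near-hyperplane} and from the number of pieces do not accumulate fatally over the $\approx\log_K(1/\delta)$ iterations; this is exactly what forces $K$ to be taken large (but bounded, or only mildly growing) rather than, say, $K=\delta^{-c}$, and it is the reason the final loss is $\delta^{-\epsilon}$ rather than a clean power.
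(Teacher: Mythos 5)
Your broad strategy — approximate the curved graph by tangent hyperplanes, apply Theorem~\ref{thm:near-hyperplane}, then pay for the curvature error by rescaling and recursing — is indeed the right one, and it is the spirit of the paper's argument. However, there is a concrete gap in the first step, and the architecture of your recursion differs from the paper's in a way that introduces a cost the paper avoids.

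The gap is the direction of the inequality relating $\delta$ and $1/K^{2}$. Over a cube $\tau$ of side $1/K$, the graph $\widetilde{\calH}$ deviates from the tangent plane $\calH_{\tau}$ by $O(\tilde{C}/K^{2})$. A cube $\Box$ of side $\delta$ meeting $\widetilde{\calH}$ therefore lies at distance up to $\approx 1/K^{2}$ from $\calH_{\tau}$, so $c\Box\cap\calH_{\tau}\neq\emptyset$ with a \emph{bounded} $c$ forces $c\delta\gtrsim 1/K^{2}$, i.e.\ $\delta\gtrsim 1/K^{2}$ — the opposite of what you wrote. In the regime $\delta\ll 1/K^{2}$, which is exactly where the theorem has content, your ``first step'' (a direct application of Theorem~\ref{thm:near-hyperplane} down to scale $\delta$) does not apply: the constant $c$ it would require blows up like $1/(K^{2}\delta)$, and the conclusion of Theorem~\ref{thm:near-hyperplane} carries a $\lesssim_{c}$ dependence. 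Relatedly, the case $\delta\gtrsim K^{-2}$ is \emph{not} trivial — for $K$ large, $K^{2(\Lambda+\epsilon)}$ is exactly the kind of loss one is trying to establish, not something absorbed for free.

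Once this is fixed, you are forced to decouple only down to the scale at which the tangent-plane approximation is valid, and then recurse. At this point your argument and the paper's genuinely diverge. You chop the base $[0,1]^{d-1}$ into $(d-1)$-dimensional pieces $\tau$ of a fixed scale $1/K$ and approximate $\widetilde{\calH}$ by a different hyperplane over each $\tau$; you then need a separate ``recombination'' decoupling over the family $\{\tau\}$, which you must justify by matching the $(d+n)$-dimensional Fourier boxes $\calU_{\Box}$ to $(d-1+n)$-dimensional boxes for the lower-dimensional surface — exactly the delicate projection analysis that already lives inside the proof of Theorem~\ref{thm:near-hyperplane}. The paper sidesteps this entirely: it keeps track of a \emph{curvature parameter} $\kappa$ (the quantity $A(\kappa,\delta)$), picks a \emph{single} tangent plane $\calH$, uses that $\widetilde{\calH}$ is within $O(\kappa)$ of $\calH$ \emph{globally} (not just over a small $\tau$), applies Theorem~\ref{thm:near-hyperplane} once at scale $\kappa$, and rescales; the recombination over the scale-$\kappa$ cubes is then just the $\ell^{p}$ structure coming out of Theorem~\ref{thm:near-hyperplane}, not a second decoupling. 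Moreover, rescaling a $\kappa$-cube to unit scale squares the curvature parameter ($\kappa\to\kappa^{2}$), so the recursion terminates after only $O(\log\log(1/\delta))$ steps. This is what makes the $|\log(\cdot)|^{C}$ losses from Theorem~\ref{thm:near-hyperplane} accumulate to $\lesssim_{\epsilon}\delta^{-\epsilon}$ without any further optimization. Your fixed-$K$ iteration runs $\approx\log(1/\delta)/\log K$ times, so the same log losses, and the extra recombination factor at each step, require more careful bookkeeping (and a choice of $K$ depending on $\epsilon$) to come out on the right side — plausible, but your written recursive inequality $A(\delta)\lesssim |\log\delta|^{C_{\calS}} K^{O(1)} K^{\Lambda} A(K^{2}\delta)$ has both the log factor ($|\log\delta|$ rather than $|\log K|$) and the power of $K$ not pinned down precisely enough to check that it closes.
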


\begin{proof}[Proof of Theorem~\ref{thm:near-graph}.]
The proof is via an iteration argument, essentially due to Oh \cite{MR3848437}.
It is also closely related to the iteration argument of Pramanik and Seeger \cite{MR2288738}.

All implicit constants in this proof are allowed to depend on the constant $\tilde{C}$ in \eqref{181126e4.9}.
For $\kappa \leq 2^{-10}$, let $A(\kappa,\delta)$ be the smallest constant such that the inequality
\[
\norm[\Big]{\sum_{\substack{\Box\in\Part{\delta},\\ \Box\cap \widetilde{\calH}\neq \emptyset}}f_{\Box} }_{p}
\leq
A(\kappa,\delta)
\Big( \sum_{\Box}\norm{f_{\Box} }^p_{p}\Big)^{1/p}
\]
holds for all hypersurfaces $\widetilde{\calH}$ that are parameterized by functions $\calL$ with
\begin{equation}
\label{eq:6}
\abs{\nabla\calL} \leq \tilde{C}
\quad\text{and}\quad
\abs{\bfH\calL} \leq 2^{10}\tilde{C}\kappa,
\end{equation}
where $\bfH\calL$ denotes the Hessian matrix of second derivatives of $\calL$.
Then the constant in \eqref{181209e4.11} is bounded by $A(2^{-10},\delta)$.

Suppose that \eqref{eq:6} holds.
Let $\calH$ be the tangent plane at some point of $\widetilde{\calH}$.
Then $\widetilde{\calH}$ is contained in the $O(\kappa)$-neighborhood of $\calH$.
If $\kappa \leq \delta$, then we can apply Theorem~\ref{thm:near-hyperplane} and obtain
\begin{equation}
\label{eq:5}
A(\kappa,\delta) \lesssim \abs{\log \delta}^{C} \delta^{-\Lambda}.
\end{equation}

If $\kappa > \delta$, then we can instead apply Theorem~\ref{thm:near-hyperplane} at scale $\kappa$.
This gives
\begin{equation}
\label{eq:4}
\norm[\Big]{\sum_{\substack{\Box\in\Part{\delta},\\ \Box\cap \widetilde{\calH}\neq \emptyset}}f_{\Box} }_{p}
\lesssim
\abs{\log\kappa}^{C} \kappa^{-\Lambda}
\Big( \sum_{\substack{\Box' \in \Part{\kappa},\\ 2\Box' \cap \calH \neq \emptyset}} \norm{ \sum_{\Box \subset \Box'} f_{\Box} }^p_{p}\Big)^{1/p}.
\end{equation}
The crucial observation now is that, after rescaling any of the $\Box'$ to unit scale, the surface $\widetilde{\calH} \cap \Box'$ becomes parameterized by a function with first derivative still bounded by $\tilde{C}$, and the second derivative bounded by $2^{10} \tilde{C} \kappa^{2}$.
By Lemma~\ref{lem:parabolic-scaling}, it follows that
\begin{equation}
\label{eq:3}
\norm{ \sum_{\substack{\Box\in\Part[\Box']{\delta},\\ \Box\cap \widetilde{\calH}\neq \emptyset}} f_{\Box} }_{p}
\leq
A(\kappa^{2},\delta/\kappa)
\Big( \sum_{\Box} \norm{ f_{\Box} }^p_{p}\Big)^{1/p}.
\end{equation}
Summing boundedly many copies of this estimate, we may replace the restriction $\Box \cap \widetilde{\calH} \neq \emptyset$ by $2\Box \cap \widetilde{\calH} \neq \emptyset$ on the left-hand side.
Inserting a rescaled version of \eqref{eq:3} into \eqref{eq:4}, we obtain
\[
A(\kappa,\delta)
\lesssim
\abs{\log\kappa}^{C} \kappa^{-\Lambda} A(\kappa^{2},\delta/\kappa).
\]
Starting with $\kappa=2^{-10}$ and applying this inequality at most approximately $\log \log \frac{1}{\delta}$ times, we arrive in the situation $\kappa \leq \delta$, because $\kappa$ is squared in each step.
In the end, we apply \eqref{eq:5}.
\end{proof}

In the Bourgain--Guth iteration scheme that is used to prove the equivalence between linear and multilinear decouplings, we have to apply lower dimensional decoupling to families of functions with Fourier support close to a subvariety.
In order to apply Theorem~\ref{thm:near-graph}, we will cover a neighborhood of the subvariety by neighborhoods of hypersurfaces with controlled curvature.
To this end, the following fact will be useful.

\begin{lemma}\label{lem:sublevel-f-vs-Df}
Let $\Omega \subset \R^{n}$ be open and $f \in C^{2}(\Omega) \cap C(\bar\Omega)$ with $\abs{\bfH f} \leq 1$.
Then, for all $\sigma,\eta>0$, we have
\begin{equation}\label{eq:sublevel-f-vs-Df}
\Set{ \abs{f} \leq \sigma\eta}
\subset
\Set{ \abs{\nabla f} \leq \sigma+\eta }
\cup
N_{\sigma} \bigl( \partial\Omega \cup ( \Set{f=0} \cap \Set{\abs{\nabla f} > \eta} ) \bigr).
\end{equation}
\end{lemma}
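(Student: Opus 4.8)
The plan is to argue by contraposition: fix a point $x \in \Set{\abs{f}\le \sigma\eta}$ that lies in neither of the two sets on the right-hand side of \eqref{eq:sublevel-f-vs-Df}, and derive a contradiction. So I would assume $\abs{\nabla f(x)} > \sigma+\eta$ and $\dist(x, \partial\Omega) \ge \sigma$, and also that $x$ is not in the $\sigma$-neighborhood of $\Set{f=0}\cap\Set{\abs{\nabla f}>\eta}$. The idea is to flow from $x$ in the direction of steepest descent of $\abs{f}$ and show that $f$ must vanish somewhere within distance $\sigma$ of $x$, at a point where $\abs{\nabla f}$ is still $>\eta$, contradicting the last assumption.

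Concretely, I would consider the unit vector $v := -\operatorname{sgn}(f(x))\,\nabla f(x)/\abs{\nabla f(x)}$ and look at the line segment $\gamma(s) = x + sv$ for $s\in[0,\sigma]$; since $\dist(x,\partial\Omega)\ge\sigma$ this segment stays in $\bar\Omega$ (using $f\in C(\bar\Omega)$ at the endpoint if needed). Set $g(s) := f(\gamma(s))$. Then $g(0) = f(x)$ with $\abs{g(0)}\le\sigma\eta$, and $g'(0) = \nabla f(x)\cdot v = -\operatorname{sgn}(f(x))\abs{\nabla f(x)}$, so $\abs{g'(0)} > \sigma+\eta$ and $g'(0)$ has the opposite sign to $g(0)$ (or $g(0)=0$, in which case we are already done since $\abs{\nabla f(x)}>\eta$ puts $x$ itself in the forbidden set). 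The Hessian bound $\abs{\bfH f}\le 1$ gives $\abs{g''(s)}\le 1$ for all $s$, hence $\abs{g'(s) - g'(0)} \le s \le \sigma$, so $g'(s)$ keeps the same sign as $g'(0)$ and $\abs{g'(s)} > \eta$ throughout $[0,\sigma]$. Since $g$ moves monotonically toward $0$ at rate $>\eta$ from a starting value of magnitude $\le\sigma\eta$, there is some $s_0\in[0,\sigma]$ with $g(s_0)=0$, i.e.\ $f(\gamma(s_0))=0$; and at that point $\abs{\nabla f(\gamma(s_0))} \ge \abs{g'(s_0)} > \eta$. Thus $\gamma(s_0) \in \Set{f=0}\cap\Set{\abs{\nabla f}>\eta}$ and $\abs{x-\gamma(s_0)} = s_0 \le \sigma$, so $x \in N_\sigma(\Set{f=0}\cap\Set{\abs{\nabla f}>\eta})$, the contradiction we sought.

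The main technical point to be careful about is the bookkeeping of signs and the precise interval of validity: one needs $g(s_0)=0$ to actually occur for some $s_0 \le \sigma$, which follows from combining $\abs{g(0)}\le\sigma\eta$ with the lower bound $\abs{g'(s)}>\eta$ — the worst case requires traveling a distance $\sigma\eta/\eta = \sigma$, which is exactly the radius of the neighborhood in the statement, so the constants match tightly and there is no slack to waste. A secondary point is the boundary: one must ensure $\gamma([0,\sigma]) \subset \bar\Omega$ so that $f$, and in particular the Hessian bound, is available along the whole segment; this is precisely why the $N_\sigma(\partial\Omega)$ term appears on the right-hand side, and the case $\dist(x,\partial\Omega)<\sigma$ is disposed of immediately by putting $x$ into that term. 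The regularity $f\in C^2(\Omega)\cap C(\bar\Omega)$ is enough: the Taylor expansion with integral remainder for $g'$ is applied on the open set, and continuity up to the boundary handles the endpoint where $f$ may vanish.
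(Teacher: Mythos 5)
Your argument is correct and is essentially the paper's proof, written out in more detail: the paper likewise applies Taylor's formula along the ray $x - s\operatorname{sgn}(f(x))\nabla f(x)/\abs{\nabla f(x)}$ together with the intermediate value theorem to produce a zero of $f$ (or a boundary point) within distance $\sigma$, checks that $\sigma\eta - \sigma(\sigma+\eta) + \sigma^2/2 < 0$, and uses the Hessian bound to keep $\abs{\nabla f} > \eta$ on $B(x,\sigma)$. Your sign bookkeeping and the observation that the constants are tight are sound, and your handling of the boundary case matches the paper's use of the $N_\sigma(\partial\Omega)$ term.
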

\begin{proof}
By Taylor's formula and the intermediate value theorem, for every $x\in\Omega$ and $t>0$ the inequality
\[
\abs{f(x)} - t \abs{\nabla f(x)} + t^{2} \norm{\bfH f}_{\infty}/2 \leq 0
\]
implies $\dist(x, \partial\Omega \cup \Set{f = 0}) \leq t$.
In the case $\abs{f(x)} \leq \sigma\eta$ and $\abs{\nabla f(x)} > \sigma+\eta$, the above inequality holds with $t=\sigma$.
Moreover, if $B(x,\sigma) \subset \Omega$, then $\abs{\nabla f}>\eta$ on that ball.
\end{proof}

\begin{lemma}\label{lem:sublevel-decomposition-of-variety}
For every natural numbers $n,A \geq 1$, every $D\geq 0$, and every sufficiently large $K>1$, there exist
\[
K_1\leq K_2\leq \dotsb \leq K_{D+1} = K
\text{ with }
K_{j+1} \sim_{n,j} K_{j}^{A+1}
\]
such that, for every normalized polynomial $P$ of degree $D$ in $n$ variables with real coefficients, there exists an increasing sequence of multiindices $\alpha_{D} <  \alpha_{D-1} < \dotsb <  \alpha_{1}$ with $\abs{\alpha_{j}} = D-j$ such that
\begin{equation}
\label{eq:sublevel-decomposition-of-variety}
\Set{ \abs{P} < 1/K_{D+1} } \cap B(0,1)
\subseteq
\bigcup_{j=1}^{D} N_{1/K_{j}^{A}} \Bigl( Z_{\partial^{\alpha_{j}}P} \cap \Set{\abs{\nabla \partial^{\alpha_{j}} P} \geq 1/K_{j}} \Bigr).
\end{equation}
Here we say that $P$ is a normalized polynomial if $\norm{P}=1$, where $\norm{P}$ is the $\ell^1$ sum of the coefficient of $P$.
Also, $Z_{P} = \Set{ x \given P(x)=0 }$.
\end{lemma}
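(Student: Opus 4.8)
The plan is to argue by induction on the degree $D$, peeling off one order of derivative at a time. The base case $D=0$ is trivial: a normalized polynomial of degree $0$ is the constant $\pm 1$, so $\Set{\abs{P}<1/K}$ is empty for $K>1$, and there is nothing to cover. For the inductive step, suppose the statement is known for degree $D-1$ with a scale sequence $K_1 \le \dotsb \le K_D$, and let $P$ be a normalized polynomial of degree $D$. The key observation is that one of the $n+D-1 \choose D$ partial derivatives $\partial_i P$ must have a coefficient that is bounded below by a constant $c_{n,D}>0$ (since the $\ell^1$-norm of $P$ equals $1$, some degree-$D$ coefficient is $\gtrsim 1$, and differentiating in a variable appearing in the corresponding monomial produces a polynomial of degree $D-1$ with $\ell^1$-norm $\gtrsim 1$). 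Fix such an index and set $\alpha_D$ to be the corresponding length-$(D-1)$ multiindex of the single derivative; after normalizing, $Q := \partial^{\alpha_D} P / \norm{\partial^{\alpha_D} P}$ is a normalized polynomial of degree $1$.

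The next step is the decomposition at the top scale. Apply Lemma~\ref{lem:sublevel-f-vs-Df} to the function $f = \partial^{\alpha_D} P$ on $\Omega = B(0,1)$ (rescaling so that $\abs{\bfH f}\le 1$ on a slightly larger ball, which costs only constants depending on $n,D$), with $\sigma = 1/K_D^{A}$ and a suitable choice of $\eta = 1/K_D$. This splits $\Set{\abs{\partial^{\alpha_D}P} \le \sigma\eta} \cap B(0,1)$ into the piece $\Set{\abs{\nabla \partial^{\alpha_D}P}\le \sigma+\eta} \subset \Set{\abs{\nabla\partial^{\alpha_D}P}\lesssim 1/K_D}$ and the $\sigma$-neighborhood of $Z_{\partial^{\alpha_D}P} \cap \Set{\abs{\nabla \partial^{\alpha_D}P} > \eta}$, which is the $j=D$ term on the right-hand side of \eqref{eq:sublevel-decomposition-of-variety}. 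It remains to handle the first piece, i.e.\ the region where the first \emph{and} the full gradient of $\partial^{\alpha_D}P$ are small; but this is controlled by $\Set{\abs{R} \lesssim 1/K_D}\cap B(0,1)$ for every entry $R$ of $\nabla\partial^{\alpha_D}P$, i.e.\ for every $\partial^\beta P$ with $\abs{\beta}=D-1$. Picking one such $\beta$ with $\partial^\beta P$ having an $\ell^1$-normalized part of size $\gtrsim 1$ (possible by the same coefficient argument, arranging $\alpha_{D-1} := \beta > \alpha_D$), we are reduced to covering $\Set{\abs{\partial^\beta P} < 1/K_{D-1}'}\cap B(0,1)$ for a degree-$(D-1)$ normalized polynomial, where $K_{D-1}' \sim_{n,D} K_D$; we then invoke the inductive hypothesis on $\partial^\beta P$ with the scale sequence $K_1 \le \dotsb \le K_{D-1}$, which produces the remaining multiindices $\alpha_{D-1} < \dotsb < \alpha_1$ and the lower-order neighborhood terms.

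The main bookkeeping obstacle — and the only genuinely delicate point — is the relation $K_{j+1}\sim_{n,j}K_j^{A+1}$ and the passage between "covering at scale $1/K_D$" and "needing the sublevel set at scale $1/K_D^{A}$" in the inductive step: the containment $\Set{\abs{\nabla\partial^{\alpha_D}P}\le \sigma+\eta}$ forces us to look at $\Set{\abs{\partial^\beta P}\lesssim 1/K_D}$, but the neighborhoods in the inductive hypothesis for $\partial^\beta P$ are taken at radius $1/K_j^A$, so we need to be at the \emph{$A$-th power} scale; reconciling this is precisely why one chooses $\sigma\eta = 1/K_D^{A+1}$, i.e.\ $K_{D}$ as the $(A+1)$-st root of the next scale, so that $K_{j+1}\sim K_j^{A+1}$ and hence $\log K_i\approx_{n,D,A}\log K_j$ for all $i,j$. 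One also has to track that the finitely many rescalings and normalizations (bounding $\abs{\bfH}$ by $1$, dividing by coefficient lower bounds, replacing $K$ by $CK$) only change each $K_j$ by multiplicative constants depending on $n$ and $j$, so the stated comparabilities survive, and that all the constants in Lemma~\ref{lem:sublevel-f-vs-Df} depend only on $n$ (and $D$, hence on $j$). Finally, one verifies that the multiindices are strictly increasing with $\abs{\alpha_j}=D-j$: this is built into the construction since at each stage we differentiate exactly once more, and "increasing" can be read off from the fixed ordering on monomials used to select the large coefficient, provided one is slightly careful to choose, at each stage, a derivative direction consistent with the previous choices (e.g.\ always taking the lexicographically largest monomial and differentiating in its largest variable).
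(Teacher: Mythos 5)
Your overall strategy — induction on $D$, applying Lemma~\ref{lem:sublevel-f-vs-Df} to split a sublevel set into a neighborhood of a zero variety and a gradient sublevel set, then recursing on a first partial of $P$ with $\ell^1$ norm bounded below — matches the paper's proof, as does the observation that the choice $\sigma = 1/K_j^{A}$, $\eta = 1/K_j$ forces $\sigma\eta = 1/K_j^{A+1}$ and hence $K_{j+1} \sim K_j^{A+1}$. However, there are two genuine issues in your write-up.

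\textbf{Backward indexing and misapplication of the lemma.} In the statement, $|\alpha_j| = D - j$, so $\alpha_D$ is the \emph{empty} multiindex and $\partial^{\alpha_D} P = P$ (degree $D$), while $\alpha_1$ is the deepest derivative with $\partial^{\alpha_1}P$ linear. You instead set $\alpha_D$ to be a length-$(D-1)$ multiindex so that $\partial^{\alpha_D}P$ is linear, and then apply Lemma~\ref{lem:sublevel-f-vs-Df} to that linear polynomial. This cannot work: a linear polynomial has vanishing Hessian, so the application is degenerate, and — more importantly — your split is of $\{|\partial^{\alpha_D}P| \leq \sigma\eta\}$, which has no a priori relation to the set $\{|P| < 1/K_{D+1}\}$ that the lemma asks you to cover. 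The top-level application of Lemma~\ref{lem:sublevel-f-vs-Df} must be to $P$ itself (rescaled by a constant $c_0 = c_0(n,D)$ so that $|c_0\bfH P| \leq 1$ on $B(0,2)$); this produces the $j = D$ term $N_{1/K_D^{A}}\bigl(Z_P \cap \{|\nabla P| \geq 1/K_D\}\bigr)$ plus a residual set $\{|\nabla P| \lesssim 1/K_D\}$, and only \emph{then} does one recurse on an entry $\partial^{\alpha}P$, $|\alpha|=1$, of the gradient.

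\textbf{Missing case split.} You assert that since $\|P\|_{\ell^1} = 1$, some degree-$D$ coefficient of $P$ is $\gtrsim 1$ and hence some $\partial_i P$ has $\ell^1$ norm $\gtrsim 1$. This is false: $P$ could be dominated by its constant term (e.g.\ $P \equiv 1$, or $P = 1 - \varepsilon x_1^D$ after renormalization), in which case every derivative has tiny $\ell^1$ norm. The paper therefore performs a dichotomy: if $\|\partial^\alpha P\|_{\ell^1} \leq c_1$ for all $|\alpha| = 1$, then all non-constant coefficients of $P$ are small, so $|P| \gtrsim 1$ on $B(0,1)$ and the left-hand side of \eqref{eq:sublevel-decomposition-of-variety} is already empty once $K_{D+1}$ is large enough; otherwise some first partial has $\|\partial^\alpha P\|_{\ell^1} \geq c_1$, and one normalizes and recurses. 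Your argument needs this case split; the positive claim you make in its place does not hold.

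With these two corrections, the remaining bookkeeping (rescaling to control the Hessian, absorbing factorials and coefficient lower bounds into the $\sim_{n,j}$ comparabilities, arranging $\log K_i \approx_{n,D,A} \log K_j$) goes through as you sketch, and the ordering $\alpha_D < \dots < \alpha_1$ is automatic: each inductive step prepends one coordinate direction $\alpha$ to the multiindices returned by the inner induction, so that $\alpha_D = 0$ and $\alpha_j = \alpha + \beta_j$ for $j<D$, and no extra care about "consistent" choices of direction is required.
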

\begin{proof}
By induction on $D$.
In the case $D=0$, the left hand side of \eqref{eq:sublevel-decomposition-of-variety} is empty provided $K>1$, since $P\equiv 1$.

Suppose now that $D \geq 1$, and that the conclusion is already known with $D$ replaced by $D-1$.

If $\norm{\partial^{\alpha} P} \le c_{1}$ for some sufficiently small constant $c_{1} = c_{1}(n,D) > 0$ and for all $\abs{\alpha}=1$, then, since $P$ is normalized, the left hand side of \eqref{eq:sublevel-decomposition-of-variety} is empty provided that $K_{D+1}$ is large enough.
Hence, we may assume $\norm{\partial^{\alpha} P} \geq c_{1}$ for some multiindex $\alpha$ with $\abs{\alpha}=1$.

Since $P$ is normalized, we have $c_{0}\abs{\bfH P(x)} \leq 1$ for some $c_{0} = c_{0}(n,D) > 0$ and all $x\in \Omega := B(0,2)$.
By Lemma~\ref{lem:sublevel-f-vs-Df}, we obtain
\begin{multline*}
\Set{ \abs{P} < 1/K_{D+1}}
\subset
\Set{ \abs{c_{0} P} < c_{0}/K_{D}^{A+1} }\\
\subset
\Set{ \abs{c_{0}\nabla P} < 1/K_{D}^{A}+c_{0}/K_{D} }
\cup
N_{1/K_{D}^{A}} \bigl( \partial\Omega \cup (Z_{P} \cap \Set{\abs{c_{0} \nabla P} > c_{0}/K_{D}}) \bigr)
\end{multline*}
provided $c_{0}/K_{D+1} \leq c_{0}/K_{D}^{A+1}$.

Since $K_{D}\geq 1$, the above neighborhood of $\partial\Omega$ does not intersect $B(0,1)$, and we obtain
\begin{multline*}
\Set{ \abs{P} < 1/K_{D+1}} \cap B(0,1)
\subset
\Set{ \abs{c_{0}\nabla P} < 1/K_{D}^{A}+c_{0}/K_{D} }\\
\cup
N_{1/K_{D}^{A}} (Z_{P} \cap \Set{\abs{\nabla P} > 1/K_{D}}).
\end{multline*}
The second term is of the required form.
In the first term, we apply the inductive hypothesis with $D$ replaced by $D-1$, $P$ replaced by $\|\partial^{\alpha} P\|^{-1} \partial^{\alpha} P$, and $K$ replaced by $\tilde{K}$ satisfying $1/K_{D}+1/(c_{0}K_{D}^{A}) = c_{1}/\tilde{K}$.
\end{proof}

The next result extends \cite[Claim 5.10]{MR3709122} and \cite[Proposition 4.1]{MR3848437}.
\begin{theorem}\label{thm:dim-reduction-variety}
Assume Hypothesis~\ref{hyp:lower-dim}.
For every $D\geq 1$ and $A> 1$, for every sufficiently large $K$, there exist
\[
K_1\leq K_2\leq \dotsb \leq K_{D+1} = K
\text{ with }
K_{j+1} \sim_{n,j} K_{j}^{A+1}
\]
such that, for every non-zero polynomial $P$ of degree $D$, there exist collections of pairwise disjoint cubes $\calG_{j} \subset \Part{1/K_{j}^{A}}$, $j=1,\dotsc,D$, such that
\[
N_{1/K}(Z_{P}) \cap [0,1]^{d}
\subset
\bigcup_{j=1}^{D} \bigcup_{\Box \in \calG_{j}} \Box
\]
and
\begin{equation}
\norm[\Big]{ \sum_{\Box \in \calG_{j}} f_{\Box} }_{p}
\lesssim_{D,\epsilon}
K_{j}^{d}(K_{j}^{A-1})^{\Lambda+\epsilon} \Big( \sum_{\Box \in \calG_{j}} \norm{f_{\Box}}^p_{p}\Big)^{1/p}.
\end{equation}
\end{theorem}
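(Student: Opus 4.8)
The plan is to combine the covering provided by Lemma~\ref{lem:sublevel-decomposition-of-variety} with the near-graph decoupling of Theorem~\ref{thm:near-graph}. First I would normalize: replacing $P$ by $P/\norm{P}$ does not change $Z_P$, so we may assume $P$ is a normalized polynomial of degree $D$ in the sense of Lemma~\ref{lem:sublevel-decomposition-of-variety}. (Strictly speaking that lemma is stated for polynomials in $n$ variables, but we are on $[0,1]^d\subset\R^d$; I would apply it with $n$ replaced by $d$, which is purely cosmetic.) Apply Lemma~\ref{lem:sublevel-decomposition-of-variety} with the given $A$ and $D$ to obtain the scales $K_1\le\dots\le K_{D+1}=K$ with $K_{j+1}\sim_{d,j}K_j^{A+1}$, together with the multiindices $\alpha_D<\dots<\alpha_1$, $\abs{\alpha_j}=D-j$, so that $N_{1/K}(Z_P)\cap[0,1]^d$ is covered by the sets $N_{1/K_j^A}\bigl(Z_{\partial^{\alpha_j}P}\cap\Set{\abs{\nabla\partial^{\alpha_j}P}\ge 1/K_j}\bigr)$ for $j=1,\dots,D$. (One should first pass from the sublevel set $\Set{\abs{P}<1/K}$ to the neighborhood $N_{1/K}(Z_P)$; these differ only by a harmless loss in the constant $K$, since $\abs{\nabla P}\lesssim 1$ on $B(0,2)$, so a point within $1/K$ of $Z_P$ has $\abs{P}\lesssim 1/K$ — absorb the constant by enlarging $K$.)

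Next I would define $\calG_j$ to be the collection of cubes $\Box\in\Part{1/K_j^A}$ such that $\Box$ meets the $j$-th piece of the cover; this immediately gives the containment $N_{1/K}(Z_P)\cap[0,1]^d\subset\bigcup_j\bigcup_{\Box\in\calG_j}\Box$. The cubes within a single $\calG_j$ are pairwise disjoint by construction (they are distinct dyadic cubes of a common side length). It remains to prove the decoupling estimate for each fixed $j$. The key geometric point is that on the set $W_j:=Z_{\partial^{\alpha_j}P}\cap\Set{\abs{\nabla\partial^{\alpha_j}P}\ge 1/K_j}$, the polynomial $Q:=\partial^{\alpha_j}P$ has gradient bounded below by $1/K_j$ in absolute value; since also $\abs{\nabla Q}\lesssim 1$ and $\abs{\bfH Q}\lesssim 1$ on $B(0,2)$ (as $P$ is normalized and of bounded degree), some coordinate of $\nabla Q$ — say $\partial_d Q$ — has $\abs{\partial_d Q}\gtrsim 1/K_j$ on a subregion, and there, by the implicit function theorem, $Z_Q$ is locally a graph $t_d=\calL(t')$ with $\abs{\nabla\calL}\lesssim 1$ (using $\abs{\nabla Q}\lesssim 1$) and $\abs{\bfH\calL}\lesssim K_j$ (the Hessian of the implicitly defined graph picks up one factor of $1/\abs{\partial_d Q}$). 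I would partition $W_j$ into $O(1)$ such graph pieces according to which coordinate direction realizes the lower bound and the sign.

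Now rescale: cover $[0,1]^d$ by $\sim K_j^d$ cubes $R$ of side length $1/K_j$. On each such $R$, after the affine rescaling to unit scale, the graph piece of $Z_Q$ is parameterized by a function whose first derivative is still $\lesssim 1$ and whose second derivative is $\lesssim K_j\cdot(1/K_j)=1$ — i.e.\ it satisfies the $C^2$ hypothesis \eqref{181126e4.9} of Theorem~\ref{thm:near-graph} with a uniform constant $\tilde C$. The cubes of $\calG_j$ contained in $R$ become, after rescaling, cubes of $\Part{(1/K_j^A)/(1/K_j)}=\Part{1/K_j^{A-1}}$ meeting this rescaled graph, so Theorem~\ref{thm:near-graph} applied at scale $\delta=1/K_j^{A-1}$ (combined with Lemma~\ref{lem:parabolic-scaling} to transfer back to $R\subset[0,1]^d$) yields
\[
\norm[\Big]{\sum_{\Box\in\calG_j,\ \Box\subset R}f_{\Box}}_p
\lesssim_{D,\epsilon}
(K_j^{A-1})^{\Lambda+\epsilon}\Big(\sum_{\Box\in\calG_j,\ \Box\subset R}\norm{f_{\Box}}_p^p\Big)^{1/p}.
\]
Finally, I would sum over the $\sim K_j^d$ cubes $R$ using the triangle inequality and then Hölder in the (at most $K_j^d$) pieces, which costs a factor of $K_j^{d(1-1/p)}\le K_j^d$; absorbing the $O(1)$ number of graph pieces and the implicit constants from the implicit-function-theorem step into the $\lesssim_{D,\epsilon}$, this gives exactly the claimed bound $K_j^d(K_j^{A-1})^{\Lambda+\epsilon}$. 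The main obstacle is the geometric lemma in the third paragraph: turning the lower bound $\abs{\nabla Q}\ge 1/K_j$ on the zero set $Z_Q$ into a genuine graph decomposition with the correct $C^2$ bound $\abs{\bfH\calL}\lesssim K_j$, uniformly and with only $O_{d,D}(1)$ pieces. One must be careful that the implicit function theorem is applied on balls of the right radius (radius $\sim 1/K_j$, so that the graph does not escape the region where $\abs{\partial_d Q}$ stays comparably large), which is exactly why the single scale $1/K_j$ appears both in the gradient lower bound and in the rescaling — this is the source of the relation $K_{j+1}\sim K_j^{A+1}$ being forced rather than arbitrary.
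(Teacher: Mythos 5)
Your proposal is correct and follows essentially the same route as the paper: apply Lemma~\ref{lem:sublevel-decomposition-of-variety} (after normalizing $P$ and noting that the $1/K$-neighborhood of $Z_P$ lies in the relevant sublevel set via the uniform $C^1$ bound on normalized polynomials), take $\calG_j$ to be the $1/K_j^A$-cubes meeting the $j$-th piece, localize to cubes $Q$ of scale $\sim 1/K_j$, use the implicit function theorem plus parabolic rescaling to put $Z_{\partial^{\alpha_j}P}\cap CQ$ in the form required by Theorem~\ref{thm:near-graph} with uniform $C^2$ bound, and sum over the $O(K_j^d)$ cubes $Q$ by the triangle inequality and H\"older to pick up the factor $K_j^{d(1-1/p)}\le K_j^d$. (The paper additionally subtracts from each $\calG_j$ the cubes already covered by earlier $\calG_{j'}$, but that is cosmetic and does not affect either the covering or the estimate; and in the IFT step the bound $\abs{\nabla\calL}\lesssim 1$ comes from choosing the coordinate direction in which $\abs{\partial_i Q}$ is largest, not merely from $\abs{\nabla Q}\lesssim 1$, but you do flag the coordinate choice so the idea is there.)
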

\begin{proof}
Let $P_{j} := \partial^{\alpha_{j}}P$ be as in Lemma~\ref{lem:sublevel-decomposition-of-variety} and
\[
Z_{j} := Z_{P_{j}} \cap \Set{\abs{\nabla P_{j}} \geq 1/K_{j}}.
\]
Let
\[
\calG_{j} := \Set{\Box \in \Part{1/K_{j}^{A}} \given C\Box \cap Z_{j} \neq \emptyset} \setminus \bigcup_{j'<j} \bigcup_{\Box' \in \calG_{j'}} \Part[\Box']{1/K_{j}^{A}}.
\]
Using Minkowski's inequality at scale $1/(C K_{j})$, it suffices to show
\[
\norm[\Big]{ \sum_{\Box \in \calG_{j} : \Box \subset Q} f_{\Box} }_{p}
\lesssim_{\epsilon, D}
(K_{j}^{A-1})^{\Lambda+\epsilon} \Big( \sum_{\Box} \norm{f_{\Box}}^p_{p}\Big)^{1/p}
\]
for every $Q \in \Part{1/(CK_{j})}$.
But if there exists $\Box \in \calG_{j}$ with $\Box \subset Q$, then $\abs{\nabla P_{j}} \gtrsim 1/K_{j}$ on $CQ$, so by the implicit function theorem $Z_{j} \cap CQ$ is a hypersurface with curvature $\lesssim K_{j}$.
After scaling $Q$ to the unit scale, the set $Z_{j} \cap CQ$ becomes a graph with curvature $\lesssim 1$, and the claim follows by Theorem~\ref{thm:near-graph}.
\end{proof}

For a fixed $d$, we can choose an $A$ in Theorem~\ref{thm:dim-reduction-variety} sufficiently large, and obtain the following result.
\begin{corollary}\label{cor:dim-reduction-variety}
For every $D\geq 1$ and $\epsilon>0$, there exists $c=c(D,\epsilon)>0$ such that, for every sufficiently large $K$, there exist
\[
K^{c} \leq \tilde{K}_1\leq \tilde{K}_2\leq \dotsb \leq \tilde{K}_{D} \leq \sqrt{K}
\]
such that, for every non-zero polynomial $P$ of degree $D$, there exist collections of pairwise disjoint cubes $\calG_{j} \subset \Part{1/\tilde{K}_{j}}$, $j=1,\dotsc,D$, such that
\[
N_{1/K}(Z_{P}) \cap [0,1]^{d}
\subset
\bigcup_{j=1}^{D} \bigcup_{\Box \in \calG_{j}} \Box
\]
and
\begin{equation}
\norm[\Big]{ \sum_{\Box \in \calG_{j}} f_{\Box} }_{p}
\lesssim_{D,\epsilon}
\tilde{K}_{j}^{\Lambda+\epsilon} \Big( \sum_{\Box \in \calG_{j}} \norm{f_{\Box}}^p_{p}\Big)^{1/p}.
\end{equation}
\end{corollary}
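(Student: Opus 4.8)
The plan is to deduce Corollary~\ref{cor:dim-reduction-variety} from Theorem~\ref{thm:dim-reduction-variety} by choosing the parameter $A$ in that theorem large enough, depending only on $d$, $D$, and $\epsilon$, and then reading off the conclusion with a renamed family of scales. First I would set $\tilde{K}_{j} := K_{j}^{A}$ for $j=1,\dotsc,D$, where $K_{1}\leq\dotsb\leq K_{D+1}=K$ are the scales produced by Theorem~\ref{thm:dim-reduction-variety}. Then $\calG_{j}\subset\Part{1/K_{j}^{A}}=\Part{1/\tilde{K}_{j}}$ as required, and the covering statement $N_{1/K}(Z_{P})\cap[0,1]^{d}\subset\bigcup_{j}\bigcup_{\Box\in\calG_{j}}\Box$ is inherited verbatim. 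The decoupling bound from Theorem~\ref{thm:dim-reduction-variety} reads
\[
\norm[\Big]{\sum_{\Box\in\calG_{j}}f_{\Box}}_{p}
\lesssim_{D,\epsilon}
K_{j}^{d}(K_{j}^{A-1})^{\Lambda+\epsilon}
\Big(\sum_{\Box\in\calG_{j}}\norm{f_{\Box}}^{p}_{p}\Big)^{1/p},
\]
so the task reduces to the elementary inequality $K_{j}^{d}(K_{j}^{A-1})^{\Lambda+\epsilon}\lesssim \tilde{K}_{j}^{\Lambda+\epsilon}=K_{j}^{A(\Lambda+\epsilon)}$, i.e. $K_{j}^{d}\lesssim K_{j}^{(\Lambda+\epsilon)}$, which holds once $A$ is large enough that $A(\Lambda+\epsilon)-(A-1)(\Lambda+\epsilon)=\Lambda+\epsilon\geq d$ — but $\Lambda$ is bounded by the trivial exponent, so more honestly one wants $d\leq (\Lambda+\epsilon)\cdot 1$ to fail in general; instead one absorbs the $K_{j}^{d}$ by writing $K_{j}^{d}(K_{j}^{A-1})^{\Lambda+\epsilon}= K_{j}^{A(\Lambda+\epsilon)}\cdot K_{j}^{d-(\Lambda+\epsilon)}$ and noting $K_{j}^{d-(\Lambda+\epsilon)}\leq K_{j}^{d}$, so it suffices to have $K_{j}^{d}\leq \tilde{K}_{j}^{\epsilon'}$ for a spare $\epsilon'$, i.e. $d\leq A\epsilon'$; choosing $A\geq d/\epsilon'$ with $\epsilon'<\epsilon$ does the job after renaming $\epsilon$.

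The remaining bookkeeping is the range of the new scales. Since $K_{j+1}\sim_{n,j}K_{j}^{A+1}$ and $K_{D+1}=K$, iterating gives $K_{1}\sim K^{1/(A+1)^{D}}$ up to constants, hence $K_{1}\geq K^{c}$ for $c=c(D,A)=c(D,\epsilon)>0$ once $K$ is large. At the top end, $\tilde{K}_{D}=K_{D}^{A}$ and $K_{D}\sim K^{1/(A+1)}$, so $\tilde{K}_{D}\sim K^{A/(A+1)}$, which is $\leq\sqrt{K}$ provided $A/(A+1)\leq 1/2$ — that is false for large $A$, so instead I would run Theorem~\ref{thm:dim-reduction-variety} with $K$ replaced by $K^{\beta}$ for a small dyadic $\beta=\beta(D,A)$, so that the output scales $K_{j}$ all lie below $K^{\beta A/(A+1)^{\cdot}}\leq K^{1/2}$ and above $K^{c}$ for a suitable $c>0$; the covering then is of $N_{1/K^{\beta}}(Z_{P})\supset N_{1/K}(Z_{P})$, which is harmless. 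Monotonicity $\tilde{K}_{1}\leq\dotsb\leq\tilde{K}_{D}$ follows from $K_{1}\leq\dotsb\leq K_{D}$.

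I expect no serious obstacle here: the content is entirely in Theorem~\ref{thm:dim-reduction-variety}, and the corollary is a repackaging that trades a polynomial-in-$K_{j}$ prefactor for a slightly larger $\epsilon$ by taking $A$ large. The one point requiring a little care is making the two constraints on $A$ compatible — $A$ large enough to absorb $K_{j}^{d}$ into $K_{j}^{A\epsilon}$, and the resulting chain of scales still sandwiched between $K^{c}$ and $\sqrt{K}$ after the preliminary substitution $K\mapsto K^{\beta}$ — but both are satisfied by first fixing $A=\lceil d/\epsilon\rceil+1$ (say) and then choosing $\beta$ small depending on $A$ and $D$. Finally one should check that $K$ being ``sufficiently large'' in the corollary follows from $K^{\beta}$ being sufficiently large in the theorem, which is immediate.
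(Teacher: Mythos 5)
Your proposal is correct and matches the intent of the paper, which dismisses this corollary with the single remark that one need only take $A$ sufficiently large in Theorem~\ref{thm:dim-reduction-variety}. You correctly identify the two details that this remark glosses over: (i) the prefactor $K_{j}^{d}$ must be absorbed into $\tilde{K}_{j}^{\epsilon}=K_{j}^{A\epsilon}$ by choosing $A\gtrsim d/\epsilon$, which is indeed what ``$A$ sufficiently large'' accomplishes; and (ii) with $\tilde{K}_{j}:=K_{j}^{A}$ one then has $\tilde{K}_{D}\sim K^{A/(A+1)}$, which exceeds $\sqrt{K}$ for large $A$, so one must first apply the theorem at a reduced scale $K^{\beta}$ with $\beta=\beta(A,D)$ small (and the resulting covering of $N_{1/K^{\beta}}(Z_{P})\supseteq N_{1/K}(Z_{P})$ is harmless). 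Both fixes are correct and compatible, and your lower-bound bookkeeping $\tilde{K}_{1}\sim K^{\beta A/(A+1)^{D}}\geq K^{c}$ gives $c=c(D,\epsilon)$ as required.
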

It appears somewhat unfortunate that the constant $c$ in Corollary~\ref{cor:dim-reduction-variety} depends also on $\epsilon$.
This could make quantification of the $C_{\epsilon}\delta^{-\epsilon}$ loss in Theorem~\ref{main_theorem} in the way of \cite{arxiv:1711.01202} less convenient.
However, currently the main obstacle in that direction is the unquantified transversality in Lemma~\ref{lem:transverse}.

\subsection{Transversality}\label{sec:gen:transverse}
To introduce the multilinear decoupling inequality, we first need to introduce the notion of transversality.
Let $(V_{j})_{j=1}^{M}$ be a tuple of linear subspaces $V_j \subset \R^{n+d}$ of dimension $d$.
Let $\pi_j: \R^{n+d}\to V_j$ denote the orthogonal projection onto $V_j$.
The \emph{Brascamp--Lieb constant} $BL((V_{j})_{j=1}^{M})$ is the smallest constant (possibly $\infty$) such that the inequality
\begin{equation}
\label{eq:BL-const-def}
\int_{\R^{n+d}} \prod_{j=1}^M f_j (\pi_j (x))^{\frac{n+d}{d M}} d x
\leq
BL((V_{j})_{j=1}^{M}) \prod_{j=1}^M \bigl( \int_{V_{j}} f_j (y) d y \bigr)^{\frac{n+d}{ d M}}
\end{equation}
holds for all non-negative measurable functions $f_j: V_j\to \R$.
\begin{definition}[transversality]\label{transversality}
Let $\nu>0$.
A tuple of subsets $R_1, \dotsc, R_M\subset [0, 1]^d$ is called \emph{$\nu$-transverse} if, for every choice $t_j\in R_j$, we have
\begin{equation}
BL((V(t_j))_{j=1}^M)\le \nu^{-1},
\end{equation}
where $V(t)$ denotes the tangent space of the surface $\calS_{d, n}$ at $t$.
\end{definition}

\begin{remark}
The notion of transversality in Definition~\ref{transversality} goes back to \cite{MR3548534}.
In the case $d=n$, $M=2$, it specializes to the notions used in \cite{MR3447712,MR3848437}, where transversality means that $V(t_{1}),V(t_{2})$ do not share common directions.
In the case $n=1$, $P_{1}$ positive definite, $M=d+1$, it specializes to the notion used in \cite{MR3374964,MR3592159}, because the associated Brascamp--Lieb inequality is the Loomis--Whitney inequality, and the best constant in that inequality is the reciprocal of the volume of the parallelepiped spanned by the normal directions of $V_{j}$'s.

In general, a tuple $(R_{j})_{j=1}^{M}$ can only be transverse if $M$ is sufficiently large depending on the surface $\calS_{d,n}$.
How large exactly $M$ can become depends on the choice of $K$ in the proof of Theorem~\ref{thm:dec:general}.
\end{remark}

One of the main results of Bennett, Carbery, Christ, and Tao \cite{MR2661170} says that
\begin{equation}
BL((V(t_j))_{j=1}^M)<\infty
\end{equation}
if and only if the spaces $(V(t_j))_{j=1}^M$ satisfy the condition
\begin{equation}\label{eq:BCCT-condition}
\dim(V)\le \frac{d+n}{d M}\sum_{j=1}^M \dim(\pi_{t_j}(V))
\end{equation}
for every linear subspace $V\subset \R^{d+n}$, where $\pi_{t}$ denotes the orthogonal projection onto $V(t)$.
Moreover, from \cite{MR3723636} we know that the function $(t_j)_{j=1}^M \mapsto BL((V(t_j))_{j=1}^M)$ is continuous (with values in $[0,\infty]$).
Indeed, it is even H\"older continuous \cite{arxiv:1811.11052}.

In order to ensure existence of transverse sets, we have to make some assumptions on the surface $\calS$.
\begin{hypothesis}\label{hyp:transverse}
Suppose that for every subspace $V \subset \R^{d+n}$ one of the following holds.
\begin{enumerate}
\item $\dim \pi_{t}(V) \geq \frac{d}{d+n} \dim V$ for \emph{every} $t\in \R^{d}$, or
\item $\dim \pi_{t}(V) > \frac{d}{d+n} \dim V$ for \emph{some} $t\in \R^{d}$.
\end{enumerate}
\end{hypothesis}
In the cases of Theorem~\ref{main_theorem}, Hypothesis~\ref{hyp:transverse} will be verified in Section~\ref{sec:spec:transverse}.

\begin{lemma}\label{lem:transverse}
Assuming Hypothesis~\ref{hyp:transverse}, there exists $\theta>0$ such that the following holds.
For every $K$, there exists $\nu_{K}>0$ such that, for every subcollection $\calR \subset \Part{1/K}$, one of the following alternatives holds.
\begin{enumerate}
\item $\calR$ is $\nu_{K}$-transverse, or
\item\label{lem:transverse:near-subvariety}
there exists a subvariety $Z$ of degree at most $d$ such that
\[
\abs{\Set{R \in \calR \given 2R \cap Z\neq \emptyset}} > \theta \abs{\calR}.
\]
\end{enumerate}
\end{lemma}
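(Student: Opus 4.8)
The plan is to prove the useful direction of the dichotomy by contraposition: I will fix a $\theta>0$ depending only on $d$ and $n$, and show that whenever the second alternative of the lemma fails for a subcollection $\calR=\{R_1,\dots,R_M\}\subset\Part{1/K}$, the Brascamp--Lieb data $(V(t_j))_{j=1}^M$ satisfies the finiteness condition \eqref{eq:BCCT-condition} for \emph{every} choice $t_j\in R_j$. A compactness argument will then turn this pointwise finiteness into the quantitative bound $\nu_K^{-1}$.

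The key step is to extract a subvariety of degree at most $d$ from a Brascamp--Lieb degeneracy. Suppose \eqref{eq:BCCT-condition} fails for some $t_j\in R_j$, witnessed by a subspace $V\subset\R^{d+n}$ with $\dim V>\frac{d+n}{dM}\sum_j\dim\pi_{t_j}(V)$; necessarily $0<\dim V<d+n$. The first alternative in Hypothesis~\ref{hyp:transverse} cannot hold for this $V$, since $\dim\pi_t(V)\ge\frac{d}{d+n}\dim V$ for every $t$ would force $\frac{d+n}{dM}\sum_j\dim\pi_{t_j}(V)\ge\dim V$. Hence its second alternative holds, so the generic value $r:=\max_{t\in\R^d}\dim\pi_t(V)$ satisfies $r>\frac{d}{d+n}\dim V$ (in particular $r\ge1$). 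Writing $V$ as the column span of a fixed matrix $A$, and recalling that $V(t)^{\perp}$ is the column span of the $(d+n)\times n$ matrix $N(t)$ with upper block $-\nabla P(t)$ and lower block $I_n$, one has $\dim\pi_t(V)=\rank[A\mid N(t)]-n$; the entries of $N(t)$ are affine-linear in $t$ because $P$ is quadratic. Consequently the exceptional locus $Z_V:=\{t:\dim\pi_t(V)<r\}$ is the common zero set of the $(r+n)\times(r+n)$ minors of $[A\mid N(t)]$, each of which has degree at most $n\le d$ in $t$, since only the at most $n$ columns taken from $N(t)$ carry non-constant entries. Because $r$ is the generic rank, at least one such minor $Q$ is not identically zero, and $Z:=Z_Q$ is a subvariety of degree at most $d$ containing $Z_V$.

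Now I count. Let $S:=\{j:t_j\in Z_V\}$; for $j\notin S$ we have $\dim\pi_{t_j}(V)=r$, so the failure of \eqref{eq:BCCT-condition} yields $(M-\abs{S})r\le\sum_j\dim\pi_{t_j}(V)<\frac{dM}{d+n}\dim V$, hence $\abs{S}>\bigl(1-\tfrac{d\dim V}{(d+n)r}\bigr)M$. The quantity $\tfrac{d\dim V}{(d+n)r}$ is strictly less than $1$, and as $\dim V$ and $r$ run over the finitely many integer pairs with $1\le\dim V\le d+n-1$ and $\tfrac{d}{d+n}\dim V<r\le d$, it takes finitely many values; set $\theta:=1-(\text{the maximum of these values})\in(0,1)$. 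Since the $R_j$ are distinct, $t_j\in R_j\subset 2R_j$, and $Z_V\subset Z$, we get $\abs{\{R\in\calR : 2R\cap Z\neq\emptyset\}}\ge\abs{S}>\theta\abs{\calR}$, so the second alternative of the lemma does hold, contradicting our assumption. Therefore, if the second alternative fails for $\calR$, then \eqref{eq:BCCT-condition} holds at every $t_j\in R_j$, so by \cite{MR2661170} the map $(t_j)_{j=1}^M\mapsto BL((V(t_j))_{j=1}^M)$ is finite on the compact set $\overline{R_1}\times\dots\times\overline{R_M}$; since this map is continuous into $[0,\infty]$ by \cite{MR3723636}, it is bounded there. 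As there are only finitely many subcollections $\calR\subset\Part{1/K}$ for a given $K$, letting $\nu_K^{-1}$ be the maximum of these bounds over the (finitely many) subcollections failing the second alternative---and $\nu_K:=1$ if there are none---completes the proof.

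The main obstacle is precisely the second paragraph: converting a degeneracy of the Brascamp--Lieb data into an honest subvariety of degree $\le d$. This is where Hypothesis~\ref{hyp:transverse} is used, and it relies crucially on $P$ being quadratic, so that $\nabla P$ is affine-linear and the relevant minors have degree at most $n$; for surfaces of higher degree one would not expect to stay within degree $d$.
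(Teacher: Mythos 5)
Your proof is correct and follows the same structure as the paper's: use Hypothesis~\ref{hyp:transverse} to exhibit, for each subspace $V$ that could obstruct the BCCT condition, a nontrivial polynomial of controlled degree whose zero set captures the projection degeneracy, then count how many $t_j$ must lie on it, and finish with the continuity/compactness argument of \cite{MR2661170,MR3723636}. The only difference is cosmetic: the paper computes $\dim\pi_t(V)$ as the rank of the $\dim V\times d$ matrix $(\langle v_i,T_j(t)\rangle)$, whereas you use $\rank[A\mid N(t)]-n$; your choice incidentally gives the tighter degree bound $n$ rather than $d$, but since $n\le d$ this does not change the statement.
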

Analogues of Lemma~\ref{lem:transverse} were also used in \cite{MR3614930,MR3709122,MR3994585,arxiv:1811.02207}.
We note that, in Lemma~\ref{lem:transverse}, the alternative \ref{lem:transverse:near-subvariety} holds trivially if $\abs{\calR}$ is sufficiently small depending on $d,n$.
\begin{remark}
The bound $d$ on the degree is not optimal in many situations.
For instance, in the case $n=1$ considered in \cite{MR3374964,MR3736493}, we can use a subvariety of degree $1$, that is, a hyperplane.
This follows from Lemma~\ref{lem:BCCT:n=1}.

In the case $d=n$ considered in \cite{MR3447712,MR3848437}, it might have previously seemed important that only certain specific varieties can obstruct transversality.
Thanks to Corollary~\ref{cor:dim-reduction-variety}, we can afford not to keep track of which varieties may or may not arise here.
\end{remark}
\begin{proof}[Proof of Lemma~\ref{lem:transverse}]
Let $V \subset \R^{d+n}$ be a subspace.
If the first alternative in Hypothesis~\ref{hyp:transverse} holds, then the BCCT condition~\eqref{eq:BCCT-condition} holds for that subspace with any choice of $t_{j}$.

Suppose now that the second alternative in Hypothesis~\ref{hyp:transverse} holds.
The restriction of the projection operator $\pi_{t}$ to $V$ can be written in coordinates as a $d \times \dim V$ matrix. In other words, let $V$ be a linear subspace spanned by $v_1, \dots, v_{\dim V}$, let $T_j(t)$ denote the tangent vector to the surface $\mathcal{S}$ in the $t_j$ variable, then $\dim \pi_{t}(V)$ is equal to the rank of the matrix 
\[
(\langle v_i, T_j(t))_{\substack{1\le i\le \dim V; \\ 1\le j\le d}}
\]
All the entries of this matrix are linear polynomials in $t$ as our surface is quadratic.  By the hypothesis, some minor determinant of that matrix of order $>\frac{d}{d+n} \dim V$ does not vanish for some $t$.
Hence, that minor determinant is a non-trivial polynomial of degree at most $d$, and the dimension of the projection is $>\frac{d}{d+n} \dim V$ outside its zero set $Z$.

In particular, the BCCT condition~\eqref{eq:BCCT-condition} for $(t_{j})_{j=1}^{M}$ holds for $V$, provided that
\[
\dim(V) \leq \frac{d+n}{dM} \sum_{j : t_{j} \not\in Z} \lfloor \frac{d}{d+n} \dim (V) + 1 \rfloor,
\]
which can be equivalently written as
\[
\abs{\Set{j \given t_{j} \not\in Z}}/M \geq
\dim(V) \frac{d}{d+n} / \lfloor \frac{d}{d+n} \dim (V) + 1 \rfloor.
\]
The number on the right-hand side is $<1$ and can take only finitely many values, since $\dim(V)$ is a natural number $\leq d+n$.
Let $\theta$ be $1$ minus the maximum of the right-hand side over $V$.
Then the BCCT condition follows from
\[
\abs{\Set{j \given t_{j} \in Z}} \leq \theta M.
\]
This clearly holds for $t_{j}\in R_{j}$, where $(R_{j})_{j=1}^{M}$ is an enumeration of $\calR$, unless the second alternative of the Lemma holds.

Finally, if the second alternative of the lemma does not hold, then the set of tuples $(t_{j})$ with $t_{j}\in R_{j}$ is a compact subset of the set of tuples for which the BCCT condition holds.
Hence, by continuity of the Brascamp--Lieb constant, there exists a lower bound $\nu_{K}$ on the transversality of the tuple $\calR$.
\end{proof}
\begin{remark}
The use of a compactness argument makes the transversality bound $\nu_{K}$ ineffective.
\end{remark}

\subsection{Multilinear decoupling}\label{sec:gen:multilinear}
We use a version of the Bourgain--Guth scheme \cite{MR2860188} that goes back to an article of Bourgain, Demeter, and the first author \cite{MR3709122}.
In this version, the degree of multi-linearity is allowed to range in an interval depending on $K$.

For a positive integer $K$ and $0 < \delta < K^{-1}$, the \emph{multilinear decoupling constant} $\MulDec^{p}(\delta, K)$ is the smallest constant such that the inequality
\begin{multline}
\label{eq:multilin-Dec}
\Bigl( \int_{\R^{d+n}} \bigl( \avprod \norm{ f_{R_{i}} }_{\avL^{p}(B(x,K))} \bigr)^{p} \dif x \Bigr)^{1/p}\\
\le \MulDec^{p}(\delta, K)
\avprod \Bigl( \sum_{J \in \Part[R_i]{\delta}} \norm{f_J}_{L^p(\R^{d+n})}^{p} \Bigr)^{\frac{1}{p}}
\end{multline}
holds for every $\nu_{K}$-transverse tuple $R_{1},\dotsc,R_{M} \in \Part{K^{-1}}$ with $1\leq M \leq K^{d}$, where $\nu_{K}>0$ is as in Lemma~\ref{lem:transverse}.
Given $f_{J}$, $J\in\Part{\delta}$, we write here and later
\begin{equation}
\label{eq:summation-convention}
f_{\alpha} := \sum_{J \in \Part[\alpha]{\delta}} f_{J}
\end{equation}
for dyadic cubes $\alpha$ of scale $\geq \delta$.

For comparison with other literature, we note that the quantity on the left-hand side of \eqref{eq:multilin-Dec} is equivalent to
\[
\bigl( \sum_{B' \in \Cov{K}} \avprod \norm{ f_{R_{i}} }_{L^{p}(B')}^{p} \bigr)^{1/p},
\]
where $\Cov{K}$ denotes a finitely overlapping cover of $\R^{d+n}$ by balls of radius $K$.
Note the absence of average in the subscript $L^{p}(B')$.

LHS of \eqref{eq:multilin-Dec} can be thought of as morally equivalent to $\norm{ \avprod \abs{f_{R_{i}}}}_{p}$, since by the uncertainty principle the functions $f_{R_{i}}$ are morally constant at scale $K$.
Following \cite{MR3592159}, we use the formally larger averaged quantity, because it can be more easily obtained in the Bourgain--Guth argument.

As for $\Dec^{p}$, we will omit the exponent $p$ in $\MulDec^{p}$ when it is clear from context.

\subsection{Bourgain--Guth argument}\label{sec:gen:bourgain-guth}
From H\"older's inequality, it follows that
\begin{equation}\label{180713e3.4}
\MulDec^{p}(\delta, K)
\lesssim
\Dec^{p}(\delta).
\end{equation}
Here $K$ is much smaller compared with $\delta^{-1}$, and the implicit constant does not depend on $K$. The Bourgain--Guth argument shows that the converse inequality also holds, up to some lower-dimensional terms.
This is made precise in the following result.
\begin{proposition}\label{181209prop5.5}
Let $2 \leq p < \infty$.
Assume Hypothesis~\ref{hyp:transverse} and Hypothesis~\ref{hyp:lower-dim}.
Then, for each $\epsilon>0$, there exists $K$ such that
\begin{equation}
\Dec^{p}(\delta)
\lesssim_{\epsilon}
\delta^{-\Lambda-\epsilon}
+ \delta^{-\epsilon} \max_{\delta\le \delta'\le 1; \delta' \text{dyadic}} \Big[\big(\frac{\delta'}{\delta} \big)^{\Lambda} \MulDec^{p}(\delta', K)\Big].
\end{equation}
\end{proposition}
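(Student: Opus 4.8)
The plan is to run the Bourgain--Guth iteration. Fix $K$ large (to be chosen) and a ball $B=B(x,\delta^{-2})$. At scale $K^{-1}$, decompose $E_{[0,1]^{d}}g = \sum_{\Delta\in\Part{K^{-1}}} f_{\Delta}$, and partition space into balls $B'$ of radius $K$. On each such $B'$, we split the cubes $\Delta$ into ``significant'' ones (those $\Delta$ for which $\norm{f_{\Delta}}_{\avL^{p}(B')}$ is within a factor $K^{-\dim}$, say, of the maximal such norm) and the rest; the non-significant cubes contribute a negligible amount after summing, losing only a factor polynomial in $K$, which is absorbed into $\delta^{-\epsilon}$ once $K$ is chosen in terms of $\epsilon$. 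For the significant cubes on $B'$, we apply Lemma~\ref{lem:transverse} to the collection $\calR=\calR(B')$ of significant cubes: either $\calR$ is $\nu_{K}$-transverse, in which case $\abs{E_{[0,1]^{d}}g}\lesssim K^{O(1)}\avprod_{i}\abs{f_{R_{i}}}$ on $B'$ for an appropriate sub-tuple $R_{1},\dots,R_{M}$ with $1\le M\le K^{d}$, and this is exactly the quantity controlled by $\MulDec^{p}(\cdot,K)$; or there is a subvariety $Z$ of degree $\le d$ meeting a $\theta$-fraction of the $2R$, $R\in\calR$.

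In the transverse case, I would sum the pointwise bound over all balls $B'$ and use \eqref{eq:multilin-Dec}, then rescale from scale $K^{-1}$ to scale $\delta$: each $R_{i}\in\Part{K^{-1}}$ is subdivided into cubes of scale $\delta$, and parabolic rescaling (Lemma~\ref{lem:parabolic-scaling}) relates the inner sum $\sum_{J\in\Part[R_i]{\delta}}f_{J}$ to a decoupling constant at scale $\delta K$, i.e.\ we pick up a factor $\Dec^{p}(\delta K)$. Iterating this balanced splitting: at each stage we either fall into the multilinear case, producing a term $\MulDec^{p}(\delta',K)$ with $\delta'$ the current scale, times the accumulated decoupling losses, or we fall into the subvariety case. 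Keeping track of the exponents — at scale $\delta'$ the decoupling constant is conjecturally $\le (\delta')^{-\Lambda}$ and rescaling from $\delta'$ to $\delta$ costs $(\delta'/\delta)^{\Lambda}$-type factors — yields the $\max_{\delta\le\delta'\le 1}[(\delta'/\delta)^{\Lambda}\MulDec^{p}(\delta',K)]$ term, with the $\delta^{-\epsilon}$ absorbing the $K^{O(1)}$ losses accrued over the $O_{K}(\log(1/\delta))$ iteration steps.

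In the low-dimensional case, the significant cubes are (up to the $\theta$-fraction) contained in $N_{CK^{-1}}(Z)$ with $\deg Z\le d$. Here I would invoke Corollary~\ref{cor:dim-reduction-variety} (which in turn rests on Theorem~\ref{thm:dim-reduction-variety}, Theorem~\ref{thm:near-graph}, and Hypothesis~\ref{hyp:lower-dim}): this covers $N_{1/K}(Z)\cap[0,1]^{d}$ by boundedly many collections $\calG_{j}\subset\Part{1/\tilde{K}_{j}}$ with $\tilde{K}_{j}\in[K^{c},\sqrt{K}]$, each satisfying a decoupling estimate with loss $\tilde{K}_{j}^{\Lambda+\epsilon}$. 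Then on each $\Box\in\calG_{j}$ I rescale to unit scale (parabolic scaling again) and recurse the whole argument at the finer scale $\delta\tilde{K}_{j}$, so the low-dimensional branch feeds back into the iteration with a genuine gain in scale $\tilde{K}_{j}\ge K^{c}$; combining the loss $\tilde{K}_{j}^{\Lambda}$ with the rescaled decoupling constant $\Dec^{p}(\delta\tilde{K}_{j})$ and unwinding the recursion produces the pure $\delta^{-\Lambda-\epsilon}$ term, since here the gain is never ``spent'' on a multilinear term. The main obstacle is bookkeeping: one must run a single induction (on $1/\delta$, or equivalently on the number of iteration steps) that simultaneously handles the transverse branch, the nested low-dimensional branches from Corollary~\ref{cor:dim-reduction-variety}, and the $K$-dependent polynomial losses, verifying that at each branch point the exponent on $\delta$ never drops below $-\Lambda$ and the total $K$-loss stays $\le\delta^{-\epsilon}$ once $K=K(\epsilon)$ is fixed large enough; choosing the significance threshold and $K$ in the right order relative to $\epsilon$ is the delicate point.
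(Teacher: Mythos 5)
Your overall architecture matches the paper's: split into significant/non-significant cubes at scale $K^{-1}$ on balls $B'$ of radius $K$, apply the dichotomy of Lemma~\ref{lem:transverse}, use $\MulDec^p$ for the transverse branch and Corollary~\ref{cor:dim-reduction-variety} for the subvariety branch, then iterate across scales to accumulate the $(\delta'/\delta)^\Lambda$ and $\delta^{-\Lambda-\epsilon}$ terms. (The paper factors this through a single-step Proposition~\ref{prop:bourgain-guth-arg}, which is then iterated $O(|\log\delta|/\log K)$ times.) However, two genuine gaps remain.

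First, you misread the conclusion of Lemma~\ref{lem:transverse}. When $\calR(B')$ is not transverse, the lemma only produces a subvariety $Z$ meeting a fraction $>\theta$ of the cubes, not ``all but a $\theta$-fraction'' as your parenthetical ``(up to the $\theta$-fraction)'' suggests. A single application of the dichotomy therefore leaves a $(1-\theta)$-proportion of significant cubes unaccounted for, and this is not negligible. The paper closes this with a stopping-time peeling on each ball: maintain a stock $\calS_m(B')$, remove from it all cubes within $N_{1/K}(Z)$ (using the covering collections $\calG_{m,j}(B')$), and re-apply the dichotomy to the remainder, terminating after $O(\log K)$ steps when the remaining stock is transverse or empty. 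Without this inner iteration on a fixed $B'$, the argument does not cover all significant cubes, so the estimate does not close.

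Second, in the transverse branch you claim to ``pick up a factor $\Dec^p(\delta K)$'' after invoking \eqref{eq:multilin-Dec}. This is wrong: by definition, $\MulDec^p(\delta,K)$ already decouples from the transverse caps at scale $K^{-1}$ all the way down to scale $\delta$, so the right-hand side of \eqref{eq:multilin-Dec} is already an $\ell^p$ sum over $\Part[R_i]{\delta}$ and no further parabolic rescaling is needed there. The $\Dec^p(\delta K)$ (more generally $\Dec^p(\delta\tilde K_j)$) factor arises only in the non-significant term and the subvariety term, where one has decoupled down to an intermediate scale $\tilde K_j^{-1}$ or $K^{-1}$ and must then rescale by Lemma~\ref{lem:parabolic-scaling}. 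Carrying the extra $\Dec^p(\delta K)$ through the transverse branch would double-count losses and spoil the final exponent.
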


It is not difficult to see that Proposition~\ref{181209prop5.5} can be proven by iterating the following result $O(\frac{\abs{\log\delta}}{\log K})$ many times.
It is important to choose $K$ large enough depending on $\epsilon$, since we lose a constant $C_{\epsilon}$ in every step of the iteration.

\begin{proposition}
\label{prop:bourgain-guth-arg}
Let $1 \leq p < \infty$.
Assume Hypothesis~\ref{hyp:transverse} and Hypothesis~\ref{hyp:lower-dim}.
Then there exists a small constant $c(d, \epsilon)>0$ such that for every $K\ge 2$ and $0<\delta<1/K$, we have
\begin{equation}
\label{eq:BG-arg}
\Dec^{p}(\delta)
\leq
C_{\epsilon} \sup_{K^{c(d,\epsilon)} \leq \tilde{K} \leq K} \tilde{K}^{\Lambda+\epsilon} \Dec^{p}(\delta\tilde{K})
+ C_{K} \MulDec^{p}(\delta, K).
\end{equation}
\end{proposition}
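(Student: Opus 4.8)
The plan is to run the Bourgain--Guth dichotomy at scale $1/K$. Partition $[0,1]^{d}$ into the cubes $R\in\Part{1/K}$ and, for a ball $B=B(x,K)$, compare the full sum $\sum_{\theta\in\Part{\delta}}f_{\theta}=\sum_{R}f_{R}$ on $B$ with the individual pieces $f_{R}$. The elementary pointwise inequality is that either a few (boundedly many, say $K^{d}$) of the $\norm{f_{R}}_{\avL^{p}(w_{B})}$ dominate, in which case $\abs{\sum_R f_R}$ on $B$ is controlled by a constant times $\max_R \norm{f_R}_{\avL^p(w_B)}$, or else many of them are comparable, in which case one can extract a $\nu_{K}$-transverse subcollection $\calR$ of size $M\le K^{d}$ — \emph{unless} the obstruction in alternative~\eqref{lem:transverse:near-subvariety} of Lemma~\ref{lem:transverse} occurs, i.e.\ more than a $\theta$-fraction of the relevant $R$'s meet $2R\cap Z\ne\emptyset$ for some subvariety $Z$ of degree $\le d$. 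So on each ball $B$ we have, up to $C_{K}$ factors,
\[
\norm[\Big]{\sum_{R}f_{R}}_{\avL^{p}(w_{B})}
\lesssim_{K}
\avprod \norm{f_{R_{i}}}_{\avL^{p}(w_{B})}
\;+\;
\norm[\Big]{\sum_{R:\,2R\cap Z\ne\emptyset}f_{R}}_{\avL^{p}(w_{B})}
\;+\;
\max_{R}\norm{f_{R}}_{\avL^{p}(w_{B})},
\]
where the transverse tuple and the variety $Z$ are allowed to depend on $B$. Raising to the $p$-th power and summing over a finitely overlapping cover of $\R^{d+n}$ by balls of radius $K$ converts the first term into (a constant times) $\MulDec^{p}(\delta,K)\cdot\avprod(\sum_{J\in\Part[R_i]{\delta}}\norm{f_J}_p^p)^{1/p}$, which after pigeonholing in the (finitely many) choices of transverse tuples is bounded by $C_{K}\MulDec^{p}(\delta,K)(\sum_{\theta}\norm{f_\theta}_p^p)^{1/p}$; and the last ``max'' term is harmless since each $f_{R}$ itself has $\Part{\delta}$-frequency decomposition and trivially $\norm{f_R}_p\le \mathrm{const}\cdot(\sum_{J\subset R}\norm{f_J}_p^p)^{1/p}$.

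The crux is the middle ``near-subvariety'' term, and this is where Corollary~\ref{cor:dim-reduction-variety} enters and produces the self-improving term $\sup_{K^{c}\le\tilde K\le K}\tilde K^{\Lambda+\epsilon}\Dec^{p}(\delta\tilde K)$. Fix $D=d$ and the given $\epsilon$; Corollary~\ref{cor:dim-reduction-variety} supplies scales $K^{c}\le\tilde K_{1}\le\dots\le\tilde K_{D}\le\sqrt K$ (with $c=c(D,\epsilon)=c(d,\epsilon)$) and, for each $Z=Z_{P}$, disjoint collections $\calG_{j}\subset\Part{1/\tilde K_{j}}$ covering $N_{1/K}(Z)\cap[0,1]^{d}\supseteq\bigcup_{2R\cap Z\ne\emptyset}R$ (here one uses that a cube $R$ of side $1/K$ with $2R\cap Z\ne\emptyset$ lies in $N_{C/K}(Z)$, at the cost of replacing $K$ by $CK$, which is absorbed by the remark after the definition of $\Dec^{p}$), with
\[
\norm[\Big]{\sum_{\Box\in\calG_{j}}f_{\Box}}_{p}
\lesssim_{d,\epsilon}\tilde K_{j}^{\Lambda+\epsilon}\Big(\sum_{\Box\in\calG_{j}}\norm{f_{\Box}}_{p}^{p}\Big)^{1/p}.
\]
Now on each $\Box\in\calG_{j}$, which has side $1/\tilde K_{j}$, apply the parabolically rescaled decoupling from scale $1/\tilde K_{j}$ down to scale $\delta$: by Lemma~\ref{lem:parabolic-scaling},
\[
\norm{f_{\Box}}_{p}=\norm[\Big]{\sum_{J\in\Part[\Box]{\delta}}f_{J}}_{p}
\le \Dec^{p}(\delta\tilde K_{j})\Big(\sum_{J\in\Part[\Box]{\delta}}\norm{f_{J}}_{p}^{p}\Big)^{1/p}.
\]
Combining, summing over $j=1,\dots,D$, and using disjointness of the $\calG_{j}$ and of the $\Part[\Box]{\delta}$, the near-subvariety term is bounded by $\sum_{j=1}^{D}\tilde K_{j}^{\Lambda+\epsilon}\Dec^{p}(\delta\tilde K_{j})(\sum_{\theta}\norm{f_{\theta}}_p^p)^{1/p}\le D\cdot\sup_{K^{c}\le\tilde K\le\sqrt K}\tilde K^{\Lambda+\epsilon}\Dec^{p}(\delta\tilde K)\,(\sum_\theta\norm{f_\theta}_p^p)^{1/p}$, and since $\tilde K_j\ge K^{c}$ the supremum lies in the advertised range $K^{c(d,\epsilon)}\le\tilde K\le K$. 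Taking the smallest constant gives precisely \eqref{eq:BG-arg}, with the $C_{\epsilon}$ coming from Corollary~\ref{cor:dim-reduction-variety} and the first two terms, and $C_{K}$ absorbing all $K$-dependent losses (the pigeonholing over transverse tuples, the finitely-overlapping-cover overhead, and the number of subvarieties $Z$ that can occur).

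The main obstacle is bookkeeping the middle term correctly: one must ensure that passing from ``$2R\cap Z\ne\emptyset$ for some $Z$ depending on $B$'' to a \emph{single} application of Corollary~\ref{cor:dim-reduction-variety} per ball is legitimate, handle the fact that the variety $Z$ varies with the ball $B$ (done by first fixing $B$, applying the covering, then resumming over $B$ — the cover $\calG_{j}(B)$ then depends on $B$, but after integrating over $B$ one uses that for each fixed $j$ and each dyadic cube $\Box$ of side $1/\tilde K_{j}$ the contribution is controlled independent of which balls selected it), and verify that the $1/K$ versus $1/(CK)$ neighborhood discrepancy and the ``$2R$'' versus ``$R$'' discrepancy are all absorbable. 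A secondary technical point is the transition between the averaged local norms $\avL^{p}(w_{B})$ and global $L^{p}$ norms; this is standard and handled exactly as in Remark~\ref{rem:global-to-local} and \cite[Section~4]{MR3592159}. None of these is conceptually deep, but the combination requires care, which is why it is isolated as a separate proposition rather than folded into Proposition~\ref{181209prop5.5}.
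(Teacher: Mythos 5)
Your high-level scheme matches the paper's: per ball, split into a small-pieces term, a near-variety term, and a transverse term; sum over balls; convert the transverse term into $\MulDec$, the near-variety term into $\Dec(\delta\tilde K_j)$ via Corollary~\ref{cor:dim-reduction-variety} plus Lemma~\ref{lem:parabolic-scaling}, and the small/max term into $\Dec(\delta K)$. However, there is a genuine gap in the per-ball decomposition. Lemma~\ref{lem:transverse} only guarantees that a $\theta$-\emph{fraction} of a non-transverse collection $\calR$ lies near a single subvariety $Z$. If that alternative triggers, the remaining $(1-\theta)$-fraction of significant cubes are near neither $Z$ nor obviously transverse, and your three-term inequality with a \emph{single} $Z$ per ball simply drops them. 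You need to iterate: remove the cubes covered by the $\calG_j$'s for $Z$, re-test the residual collection, extract a fresh variety if still non-transverse, and repeat; since each step kills a fixed proportion $\theta$ of the remaining significant cubes and $|\calS_0|\le K^d$, this terminates in $O(\log K)$ rounds. The paper's proof is precisely this stopping-time algorithm (the sets $\calS_m(B')$, $\calG_{m,j}(B')$, $\widetilde\calG_{m,j}(B')$), and the $O(\log K)$ factor it produces is what gets absorbed by the slack $\tilde K_j^\epsilon$ — a point the paper flags explicitly ("it is in this step that we require $\log\tilde K_j\approx\log K$"). Without the iteration, your per-ball inequality is not justified. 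You also need the pruning $\widetilde\calG_{m,j}$ to avoid counting a cube $\beta$ in more than one $(m,j)$; this is not automatic from the disjointness inside a single $\calG_j$.

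A secondary, smaller slip: you call $\norm{f_R}_p\le\mathrm{const}\cdot\bigl(\sum_{J\subset R}\norm{f_J}_p^p\bigr)^{1/p}$ ``trivial''. It is not — the triangle-plus-H\"older bound costs $(K\delta)^{-d(1-1/p)}$, which is enormous. The correct justification is the one the paper uses for the small term \eqref{eq:BG:small}: bound $\max_\alpha$ by $\ell^p_\alpha$ and then apply the rescaled decoupling constant $\Dec^{p}(\delta K)$ to each $f_\alpha$, which lands inside the first term of \eqref{eq:BG-arg} with $\tilde K = K$. This does not change the shape of your argument but is needed to make the ``harmless'' claim honest.
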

\begin{proof}[Proof of Proposition~\ref{prop:bourgain-guth-arg}]
Fix $f_{J}$, $J\in\Part{\delta}$, and recall the convention \eqref{eq:summation-convention}.

Let $B' \in \Cov{K}$, and initialize
\begin{equation}
\label{eq:initial-stock}
\calS_{0}(B') := \Set{ \alpha \in \Part{K^{-1}} \given \norm{f_\alpha}_{L^{p}(B')} \ge K^{-d} \max_{\alpha' \in \Part{K^{-1}}} \norm{f_{\alpha'}}_{L^{p}(B')} }.
\end{equation}
We repeat the following algorithm.

Let $m\geq 0$.
If $\calS_{m}(B') = \emptyset$ or $\calS_{m}(B')$ is $\nu_{K}$-transverse, then we set
\[
\calT(B') := \calS_{m}(B').
\]
Otherwise, by Lemma~\ref{lem:transverse}, there exists a subvariety $Z$ of degree $d$ such that
\begin{equation}
\label{eq:stock-near-variety}
\abs{\Set{ \alpha\in \calS_{m}(B') \given 2\alpha \cap Z \neq \emptyset}}
\geq
\theta \abs{\calS_{m}(B')}.
\end{equation}
Let $\calG_{m,j}(B') := \calG_{j}$ be given by Corollary~\ref{cor:dim-reduction-variety}.

Repeat the algorithm with
\begin{align*}
\calS_{m+1}(B')
&:=
\calS_{m}(B') \setminus \bigcup_{j=1}^{D} \bigcup_{\Box \in \calG_{m,j}(B')} \Part{\Box,1/K}.
\end{align*}
Since in each step we remove at least a fixed proportion $\theta$ of $\calS_{m}(B')$, this algorithm terminates after $O(\log K)$ steps.

To avoid multiple counting, we introduce
\begin{equation}\label{new_boxes}
\widetilde{\calG}_{m,j}(B') := \Bigl( \calG_{m,j}(B') \setminus \bigcup_{0\le m' < m} \calG_{m',j}(B') \Bigr) \setminus \bigcup_{1\le j' < j} \bigcup_{m'} \bigcup_{\Box\in \calG_{m', j'}(B')} \Part{\Box, 1/\tilde{K_j}}.
\end{equation}
We estimate
\begin{align}
\norm{ f }_{L^{p}(B')}
\label{eq:BG':small}&\leq
\sum_{\alpha \in \Part{K^{-1}} \setminus \calS_{0}(B')} \norm{ f_{\alpha} }_{L^{p}(B')}\\
\label{eq:BG':variety}&+
\sum_{m \lesssim \log K} \sum_{j=1}^{D} \norm{ \sum_{\beta \in \widetilde{\calG}_{m,j}(B')} f_{\beta} }_{L^{p}(B')}\\
\label{eq:BG':transverse}&+
\sum_{\alpha \in \calT(B')} \norm{ \ED_{\alpha} g }_{L^{p}(B')}
\end{align}

By definition of $\calS_{0}(B')$, we obtain
\[
\eqref{eq:BG':small}
\lesssim
\max_{\alpha' \in \Part{1/K}}
\norm{ f_{\alpha'} }_{L^{p}(B')}.
\]
By Corollary~\ref{cor:dim-reduction-variety} and a simple localization argument as in Remark~\ref{rem:global-to-local}, we have
\begin{multline*}
\eqref{eq:BG':variety}
\lesssim_{\epsilon}
\sum_{m\lesssim \log K} \sum_{j=1}^{D} \tilde{K}_{j}^{\Lambda+\epsilon} \Bigl( \sum_{\beta \in \widetilde{\calG}_{m,j}(B')} \norm{ f_{\beta} }_{L^{p}(w_{B'})}^{p} \Bigr)^{1/p}\\
\lesssim
(\log K) \sum_{j=1}^{D} \tilde{K}_{j}^{\Lambda+\epsilon} \Bigl( \sum_{\beta \in \Part{1/\tilde{K}_{j}}} \norm{ f_{\beta} }_{L^{p}(w_{B'})}^{p} \Bigr)^{1/p}
\end{multline*}
If $\calT(B') \neq \emptyset$, then by definition of $\calS_{0}(B')$ we obtain
\[
\eqref{eq:BG':transverse}
\lesssim
K^{C}
\min_{\alpha' \in \calT(B')} \norm{ f_{\alpha'} }_{L^{p}(B')}
\leq
K^{C} \max_{1 \leq M \leq K^{d}} \max_{\substack{\alpha_{1},\dotsc,\alpha_{M} \in \Part{K^{-1}}\\ \nu_{K}-\text{transverse}}} \avprod \norm{ f_{\alpha_{i}} }_{L^{p}(B')}.
\]
Next, we sum over all balls $B'\subset \R^{d+n}$ and obtain
\begin{align}
\notag
\norm{f}_{L^{p}(\R^{d+n})}
&\leq
\bigl( \sum_{B' \in \Cov{K}} \norm{f}_{L^{p}(B')}^{p} \bigr)^{1/p}\\
\label{eq:BG:small} & \lesssim
\bigl( \sum_{B' \in \Cov{K}} \max_{\alpha \in \Part{K^{-1}}} \norm{f_{\alpha}}_{L^{p}(B')}^{p} \bigr)^{1/p}\\
\label{eq:BG:variety}&+
(\log K) C_{\epsilon} \sum_{j=1}^{D} \tilde{K}_{j}^{\Lambda+\epsilon}
\Bigl( \sum_{\beta \in \Part{1/\tilde{K}_{j}}} \norm{ f_{\beta} }^p_{L^{p}(\R^{d+n})}\Bigr)^{1/p}\\
\label{eq:BG:transverse}&+
K^{C} \bigl( \sum_{B' \in \Cov{K}} \max_{1 \leq M \leq K^{d}} \max_{\substack{\alpha_{1},\dotsc,\alpha_{M} \in \Part{K^{-1}}\\ \nu_{K}-\text{transverse}}} \avprod \norm{ f_{\alpha_{i}} }_{L^{p}(B')}^{p} \bigr)^{1/p}
\end{align}
Let us pause and remark that it is in this step that we require $\log \tilde{K_j}\approx_{d, n, \epsilon} \log K$.
We will absorb the factor $\log K$ by $\tilde{K_j}^{\epsilon}$.

In the term \eqref{eq:BG:small}, we bound $\max_{\alpha}$ by $\ell^p_{\alpha}$ and obtain 
\[
\bigl(\sum_{\alpha \in \Part{K^{-1}}} \norm{f_{\alpha}}_{L^{p}(\R^{d+n})}^{p} \bigr)^{1/p}.
\]
Each term $\norm{f_{\alpha}}_{L^{p}(\R^{d+n})}$ is a rescaled version of the left hand side of the above inequality.
Therefore, one can use the definition of the decoupling constant and scaling.
The same argument is also applied to \eqref{eq:BG:variety}.

In the last term \eqref{eq:BG:transverse}, by definition of the multilinear decoupling constant \eqref{eq:multilin-Dec}, we estimate
\begin{multline*}
\eqref{eq:BG:transverse}
\lesssim_{K}
\bigl( \sum_{1 \leq M \leq K^{d}} \sum_{\substack{\alpha_{1},\dotsc,\alpha_{M} \in \Part{K^{-1}}\\ \nu_{K}-\text{transverse}}} \sum_{B' \in \Cov{K}} \avprod \norm{ f_{\alpha_{i}} }_{L^{p}(B')}^{p} \bigr)^{1/p}\\
\leq
\MulDec(\delta, K)
\bigl( \sum_{1 \leq M \leq K^{d}} \sum_{\substack{\alpha_{1},\dotsc,\alpha_{M} \in \Part{K^{-1}}\\ \nu_{K}-\text{transverse}}} \avprod \bigl( \sum_{J \in \Part[\alpha_{i}]{\delta}} \norm{ f_{J} }_{L^{p}(\R^{d+n})}^{p} \bigr) \bigr)^{1/p}\\
\leq
\MulDec(\delta, K)
\bigl( \sum_{1 \leq M \leq K^{d}} \prod_{i=1}^{M} \sum_{\alpha_{i} \in \Part{K^{-1}}} \bigl( \sum_{J \in \Part[\alpha_{i}]{\delta}} \norm{ f_{J} }_{L^{p}(\R^{d+n})}^{p} \bigr)^{\frac{1}{M}} \bigr)^{1/p}\\
\lesssim_{K}
\MulDec(\delta, K)
\bigl( \sum_{1 \leq M \leq K^{d}} \prod_{i=1}^{M} \bigl( \sum_{\alpha_{i} \in \Part{K^{-1}}} \sum_{J \in \Part[\alpha_{i}]{\delta}} \norm{ f_{J} }_{L^{p}(\R^{d+n})}^{p} \bigr)^{\frac{p}{p M}} \bigr)^{1/p}\\
\lesssim_{K}
\MulDec(\delta, K)
\bigl( \sum_{J \in \Part{\delta}} \norm{ f_{J} }_{L^{p}(\R^{d+n})}^{p} \bigr)^{1/p}.
\end{multline*}
Since $f_{J}$ were arbitrary, this concludes the proof.
\end{proof}

\subsection{Ball inflation}\label{sec:ball-inflation}
The following estimate, which relies on Kakeya--Brascamp--Lieb type inequalities, was introduced in \cite{MR3548534}.
We refer to \cite[Lemma 3.1]{arxiv:1811.02207} for a simplified proof.
\begin{proposition}\label{prop:ball-inflation}
Let $K\geq 1$ be a dyadic integer and $0 < \delta \leq \rho \leq 1/K$.
Let $\Set{R_j}_{j=1}^M \subset \Part{1/K}$ be a $\nu$-transverse collection of cubes.
Let $B\subset \R^{d+n}$ be a ball of radius $\rho^{-2}$.
Then, for each $1 \leq t < \infty$, we have
\begin{equation}\label{0727e4.14ha}
\begin{split}
& \avL^{\frac{d+n}{d} t}_{x \in B} \avprod \ell^{t}_{J_i \in \Part[R_i]{\rho}} \norm{f_{J_i}}_{\avL^{t}(w_{B(x,1/\rho)})}\\
& \lesssim \nu^{-\frac{d}{t(d+n)}}
\avprod \ell^{t}_{J_i \in \Part[R_i]{\rho}} \norm{f_{J_i}}_{\avL^{t}(w_B)}
\end{split}
\end{equation}
Here
\[
\avL^{p}_{x \in B}(\cdot):= \bigl( \frac{1}{\abs{B}} \int_B \abs{\cdot}^p \bigl)^{1/p},
\]
and for a countable set $\mathcal{P}$ and a sequence $\{a_J\}_{J\in \mathcal{P}}$, 
\[
\ell^t_{J\in \mathcal{P}}a_J:=\big(\sum_{J\in \mathcal{P}}|a_J|^t\big)^{1/t}.
\]
\end{proposition}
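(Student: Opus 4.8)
The plan is to recognize Proposition~\ref{prop:ball-inflation} as an instance of the Kakeya--Brascamp--Lieb inequality; this is exactly the strategy of \cite{MR3548534}, streamlined as in \cite[Lemma 3.1]{arxiv:1811.02207}, and I would simply adapt that argument to the present weighted setting. Raising both sides of \eqref{0727e4.14ha} to the power $t$, the left-hand side becomes $\bigl(\frac1{|B|}\int_B \prod_{i=1}^M b_i(x)^{\frac{d+n}{dM}}\,\dif x\bigr)^{d/(d+n)}$, where $b_i(x):=\sum_{J_i\in\Part[R_i]{\rho}}\norm{f_{J_i}}_{\avL^t(w_{B(x,1/\rho)})}^t$, and the right-hand side is, up to harmless weight factors, $\nu^{-d/(d+n)}\prod_i\bigl(\frac1{|B|}\int b_i\bigr)^{1/M}$. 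The exponent data $(V(a_{R_i}),\tfrac{d+n}{dM})_{i=1}^M$ satisfies the Brascamp--Lieb scaling identity $\sum_i\frac{d+n}{dM}\dim V(a_{R_i})=d+n$, so this is precisely the shape of a Brascamp--Lieb/Kakeya estimate for the tangent spaces, and by $\nu$-transversality (Definition~\ref{transversality}) the relevant Brascamp--Lieb constant is $\le\nu^{-1}$.

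First I would perform the standard local-constancy reduction: since $w_B$ and $w_{B(x,1/\rho)}$ change slowly, $b_i$ is, up to acceptable errors, constant on balls of radius $1/\rho$, so it suffices to prove a discrete version over a finitely overlapping tiling of $B$ by such balls. Next comes the geometric heart of the matter. Each $f_{J_i}$ has Fourier support in the plate $\calU_{J_i}$, which is contained in a $C\rho$-by-$\dots$-by-$C\rho^2$ parallelepiped whose long directions span (a small perturbation of) the tangent space $V(a_{J_i})$; as $J_i$ ranges over $\Part[R_i]{\rho}$ these are all contained in a single slab $\tau_{R_i}$ of thickness $O(\rho)$ around $V(a_{R_i})$. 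Consequently $\abs{f_{J_i}}$ is morally constant on the dual plates $\tau_{R_i}^{*}$, and after the $\avL^t$-averaging at scale $1/\rho$ the function $b_i$ is genuinely nearly constant along translates of $\tau_{R_i}^{*}$ inside $B$. For non-even $t$ one cannot literally control $\widehat{\abs{f_{J_i}}^t}$; I would instead obtain the near-constancy of $b_i$ along $\tau_{R_i}^{*}$ directly from the uncertainty principle applied to $f_{J_i}$ together with the averaging, as in \cite{arxiv:1811.02207}, or split $t$ into an even integer plus a remainder and use H\"older. Tiling $B$ by translates of $\tau_{R_i}^{*}$ in each of the $M$ orientations then puts us in position to apply the Kakeya--Brascamp--Lieb inequality.

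Finally, I would invoke the Kakeya--Brascamp--Lieb inequality (of Bennett--Carbery--Christ--Tao type, as in \cite{MR2661170,MR3548534}) for the $M$ plate families: transversality gives a Brascamp--Lieb constant bounded by $\nu^{-1}$, and unwinding the normalization---raising to the power $d/(d+n)$ and back down by $1/t$---produces exactly the gain $\nu^{-\frac{d}{t(d+n)}}$, while the passage from $L^t$ to $L^{\frac{d+n}{d}t}$ is forced by $\sum_i\frac{d+n}{dM}=\frac{d+n}{d}$. Undoing the discretization and restoring the weights $w_B$ (using the nesting of the weights together with a Fefferman--Stein type bound to absorb the tails) completes the proof. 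I expect the main obstacle to be precisely this interface between the combinatorial Kakeya--Brascamp--Lieb statement and the analytic one: getting the local-constancy and Fourier-support bookkeeping right for arbitrary $t\ge 1$, tracking the correct power of the Brascamp--Lieb constant through the normalization, and handling the rapidly decaying weights rather than sharp cutoffs.
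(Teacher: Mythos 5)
The paper gives no proof of Proposition~\ref{prop:ball-inflation}, deferring entirely to \cite{MR3548534} and \cite[Lemma 3.1]{arxiv:1811.02207}. Your sketch correctly reproduces the essential shape of the argument in those references---local-constancy reduction followed by the Kakeya--Brascamp--Lieb inequality applied to plate families dual to slabs around the tangent spaces $V(a_{R_i})$, with the factor $\nu^{-\frac{d}{t(d+n)}}$ arising from the transversality bound $BL\le\nu^{-1}$ after unwinding the $t$-th power and $\tfrac{d+n}{d}$ normalization---so it is consistent with the route the paper cites.
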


\subsection{Bourgain--Demeter iteration}\label{sec:bourgain-demeter}
In this section, we present a version of the iteration argument of Bourgain and Demeter.
Its $\ell^{2}L^{p}$ version was introduced in \cite{MR3374964}, and the $\ell^{p}L^{p}$ version in \cite{MR3736493}.
The simplified version below is a special case of the iteration in \cite{arxiv:1811.02207}.

Throughout this section, let $R_1,\dotsc,R_M \in \Part{1/K}$ be $\nu_{K}$-transverse cubes.

For $\rho \in 2^{-\N}$, we define the quantity
\begin{align*}
A_p(\rho)
&:=
L^{p}_{x} \avprod \ell^{2}_{Q\in \Part[R_i]{\rho}} \norm{f_{Q}}_{\avL^{2}(w_{B(x,1/\rho)})}.
\end{align*}
Here $L^p_x$ refers to taking the $L^p$ norm of a function depending on the $x$ variable.
We caution the reader that the quantities denoted by $A$ in \cite{MR3592159} would correspond to our $A$ with $L^{p}_{x}$ replaced by $\avL^{p}_{x\in B}$ for a large ball $B$.

Let $\tilde p:=\max(2,pd/(d+n))$, and define $\kappa = \kappa(p) \in [0,1]$ by
\[
\frac{1}{\tilde{p}}=\frac{1-\kappa}{2}+\frac{\kappa}{p}.
\]
It will be important that $\kappa \leq 1/2$ if and only if $p \leq \frac{2(d+2n)}{d}$.
\begin{proposition}
\label{prop:iter}
For each $2 \leq p < \infty$, we have
\begin{equation}
\label{eq:33}
A_p(\rho)
\lesssim
\nu^{-1/\tilde{p}} \rho^{-d(1/2-1/\tilde{p})} A_{p}(\rho^{2})^{1-\kappa}
\Bigl( \Dec^{p}(\delta/\rho) \avprod \ell^{p}_{J \in \Part[R_i]{\delta}} \norm{ f_{J} }_{p} \Bigr)^{\kappa}
\end{equation}
\end{proposition}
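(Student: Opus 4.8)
The plan is to follow the standard Bourgain--Demeter interpolation-plus-ball-inflation scheme, adapted to the averaged $\ell^2 L^p$ quantities $A_p(\rho)$ defined above. The starting point is the trivial observation that, after restricting attention to a single ball $B(x,1/\rho)$ and using that each $f_Q$ with $Q \in \Part[R_i]{\rho}$ is (morally) constant at scale $1/\rho$, we may pass from the $L^2$-average on $B(x,1/\rho)$ to an $L^{\tilde p}$-average on the same ball at the cost of a factor $\rho^{-d(1/2 - 1/\tilde p)}$; this is where the exponent $\rho^{-d(1/2-1/\tilde p)}$ on the right-hand side of \eqref{eq:33} comes from. So the first step is a local Bernstein/Hölder inequality upgrading $\norm{f_Q}_{\avL^2(w_{B(x,1/\rho)})}$ to $\norm{f_Q}_{\avL^{\tilde p}(w_{B(x,1/\rho)})}$.

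Next I would interpolate: inside each ball $B(x,1/\rho)$ one writes $\norm{f_Q}_{\avL^{\tilde p}}$ via the exponent identity $\frac{1}{\tilde p} = \frac{1-\kappa}{2} + \frac{\kappa}{p}$ as a product $\norm{f_Q}_{\avL^2}^{1-\kappa}\norm{f_Q}_{\avL^p}^{\kappa}$, and then apply Hölder over $Q$ and over $x$ to split $A_p(\rho)$ (after the Bernstein step) into the product of a ``low-exponent'' factor built from $L^2$-averages at scale $\rho$ and a ``high-exponent'' factor built from $L^p$-averages at scale $\rho$. The low-exponent factor is handled by ball inflation, Proposition~\ref{prop:ball-inflation}, applied with $t=2$ and the roles of $\delta,\rho$ there taken as $\delta \leftarrow \rho$, $\rho \leftarrow \rho^2$: this passes from $\avL^{\frac{d+n}{d}\cdot 2}$-averages on balls $B(x,1/\rho)$ at inner scale $\rho$ down to averages on the bigger ball $B$ at inner scale $\rho^2$, producing the factor $A_p(\rho^2)^{1-\kappa}$ together with the loss $\nu^{-d/(2(d+n))}$ that is part of the $\nu^{-1/\tilde p}$ in \eqref{eq:33} (here one uses $\tilde p = \max(2, pd/(d+n))$, so that $\tfrac{d+n}{d}\tilde p \ge p$ and the ball-inflation exponent $t=2$ is compatible). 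The high-exponent factor, which involves $\norm{f_Q}_{\avL^p}$ at scale $\rho$ inside small balls, is first trivially dominated by the global $L^p$ norm and then decoupled from scale $\rho$ down to scale $\delta$ by the (flat, non-averaged) decoupling constant: rescaling each $R_i$ to unit scale and applying $\Dec^p(\delta/\rho)$ via Lemma~\ref{lem:parabolic-scaling} and the localization in Remark~\ref{rem:global-to-local} produces $\bigl(\Dec^p(\delta/\rho)\,\avprod \ell^p_{J\in\Part[R_i]{\delta}}\norm{f_J}_p\bigr)^{\kappa}$.

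The bookkeeping that has to be done carefully is the interchange of the geometric mean $\avprod$ over $i$, the $\ell^2$ (or $\ell^p$) sums over the caps $Q$, the $L^p$ (or averaged $\avL$) integration in $x$, and the Hölder split into exponents $2$ and $p$ with weights $1-\kappa$ and $\kappa$: one must check that all these operations commute in the required direction, which uses that $\kappa \le 1$ (so that Hölder, not its reverse, applies in $x$) and the nesting of the weights $w_{B(x,1/\rho)} \lesssim w_B$ when $B$ has radius $\rho^{-2}$. I expect the main obstacle to be precisely this exponent-juggling in the interpolation step — making sure the geometric mean over the transverse family survives Hölder's inequality so that ball inflation can be applied to the $L^2$ part while the $L^p$ part is simultaneously peeled off for decoupling — rather than any single inequality; each individual ingredient (Bernstein, ball inflation, parabolic rescaling, localization) is already available from the results quoted above, and the constant $\nu^{-1/\tilde p}$ is assembled from the Bernstein step contributing nothing in $\nu$ and the ball-inflation step contributing $\nu^{-d/(\tilde p(d+n))} = \nu^{-1/\tilde p}$ after noting $\tilde p(d+n) \le p(d+n)$ and the precise form of $\tilde p$.
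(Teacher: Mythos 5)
There is a genuine gap in the order of operations: you propose to interpolate (split $\avL^{\tilde p}$ into $(\avL^2)^{1-\kappa}(\avL^p)^{\kappa}$) first and then apply ball inflation with $t=2$ to the low-exponent factor. But after your H\"older splits, the low factor is $L^p_x \avprod \ell^2_{Q}\norm{f_Q}_{\avL^2(w_{B(x,1/\rho)})}$, which is exactly $A_p(\rho)$ again — the inner ball is still at scale $1/\rho$. To turn it into $A_p(\rho^2)$ you need to apply Proposition~\ref{prop:ball-inflation}, but the left-hand side of that proposition with $t=2$ carries an $\avL^{2(d+n)/d}_{\tilde x\in B(x,1/\rho^2)}$ average; to insert such an average inside $L^p_x$ you need $2(d+n)/d \geq p$, and this fails precisely when $p > 2(d+n)/d$, i.e.\ when $\tilde p > 2$ and $\kappa > 0$ — the only nontrivial regime. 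The paper avoids this by applying ball inflation at exponent $t=\tilde p$ \emph{before} the interpolation split: the defining property of $\tilde p$ is that $\tilde p(d+n)/d \geq p$, which is exactly what makes the insertion $L^p_x = L^p_x\avL^p_{\tilde x} \leq L^p_x \avL^{\tilde p(d+n)/d}_{\tilde x}$ go the right way. Only after ball inflation has moved the inner ball to radius $\rho^{-2}$ does the paper split via H\"older into an $(\ell^2,\avL^2)$-part (which then becomes $A_p(\rho^2)$ by $L^2$ orthogonality, since the ball is already at the correct scale) and an $(\ell^p,\avL^p)$-part (which is decoupled). Your claimed justification — that $\tfrac{d+n}{d}\tilde p \ge p$ makes $t=2$ compatible — is the right fact but attached to the wrong exponent: it is what makes $t=\tilde p$ compatible.

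A secondary point: the factor $\rho^{-d(1/2-1/\tilde p)}$ does not come from upgrading the per-cap average $\norm{f_Q}_{\avL^2(w_{B(x,1/\rho)})}$ to $\norm{f_Q}_{\avL^{\tilde p}(w_{B(x,1/\rho)})}$ — that step is free, since averaged $L^q$ norms on a fixed ball increase in $q$ by H\"older. The loss comes from the sum over caps, $\ell^2_{Q\in\Part[R_i]{\rho}} \to \ell^{\tilde p}_{Q\in\Part[R_i]{\rho}}$, which costs $(\#\Part[R_i]{\rho})^{1/2-1/\tilde p} \sim \rho^{-d(1/2-1/\tilde p)}$. Without performing this $\ell^2\to\ell^{\tilde p}$ upgrade on the cap sum, the subsequent H\"older split with the exponent identity $\tfrac{1}{\tilde p}=\tfrac{1-\kappa}{2}+\tfrac{\kappa}{p}$ does not fit the $\ell^2_Q$ structure you would still have.
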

\begin{proof}
Using ball inflation from scale $\rho$ to scale $\rho^{2}$, we obtain
\begin{align*}
A_p(\rho)
&=
L^{p}_{x} \avL^{p}_{\tilde{x} \in B(x,1/\rho^{2})} \avprod \ell^{2}_{Q\in \Part[R_i]{\rho}} \norm{f_{Q}}_{\avL^{2}(w_{B(\tilde{x},1/\rho)})}\\
\text{by H\"older's inequality } &\lesssim
\rho^{-d(1/2-1/\tilde{p})} L^{p}_{x} \avL^{\frac{\tilde{p} (d+n)}{d}}_{\tilde{x} \in B(x,1/\rho^{2})} \avprod \ell^{\tilde{p}}_{Q\in \Part[R_i]{\rho}} \norm{f_{Q}}_{\avL^{\tilde p}(w_{B(\tilde{x},1/\rho)})}\\
\text{by Prop.~\ref{prop:ball-inflation} } &\lesssim
\nu^{-1/\tilde{p}} \rho^{-d(1/2-1/\tilde{p})} L^{p}_{x} \avprod \ell^{\tilde{p}}_{Q\in \Part[R_i]{\rho}} \norm{f_{Q}}_{\avL^{\tilde p}(w_{B(x,1/\rho^{2})})}\\
\text{by H\"older's inequality } &\leq
\nu^{-1/\tilde{p}} \rho^{-d(1/2-1/\tilde{p})} \Big( L^{p}_{x} \avprod \ell^{2}_{Q\in \Part[R_i]{\rho}} \norm{f_{Q}}_{\avL^{2}(w_{B(x,1/\rho^{2})})} \Big)^{1-\kappa}\\
&\quad\cdot
\Big( L^{p}_{x} \avprod \ell^{p}_{Q\in \Part[R_i]{\rho}} \norm{f_{Q}}_{\avL^{p}(w_{B(x,1/\rho^{2})})} \Big)^{\kappa}
\end{align*}
By $L^{2}$ orthogonality, the first bracket is
\[
\lesssim
L^{p}_{x} \avprod \ell^{2}_{Q\in \Part[R_i]{\rho^{2}}} \norm{f_{Q}}_{\avL^{2}(w_{B(x,1/\rho^{2})})}
=
A_{p}(\rho^{2}).
\]
In the second bracket, we estimate
\begin{equation}
\label{eq:37}
\begin{split}
&
L^{p}_{x} \avprod \ell^{p}_{Q\in \Part[R_i]{\rho}} \norm{f_{Q}}_{\avL^{p}(w_{B(x,1/\rho^2)})}\\
\text{by H\"older's inequality }&\leq
\avprod L^{p}_{x} \ell^{p}_{Q\in \Part[R_i]{\rho}} \norm{f_{Q}}_{\avL^{p}(w_{B(x,1/\rho^2)})}\\
\text{by Minkowski's inequality }&\leq
\avprod \ell^{p}_{Q\in \Part[R_i]{\rho}} L^{p}_{x} \norm{f_{Q}}_{\avL^{p}(w_{B(x,1/\rho^2)})}\\
&\lesssim
\avprod \ell^{p}_{Q\in \Part[R_i]{\rho}} \norm{f_{Q}}_{p}\\
\text{by scaling }&\lesssim
\avprod \ell^{p}_{Q\in \Part[R_i]{\rho}} \bigl( \Dec(\delta/\rho) \ell^{p}_{J\in \Part[Q]{\delta}} \norm{f_{J}}_{p} \bigr)\\
&=
\Dec(\delta/\rho) \avprod \ell^{p}_{J\in \Part[R_i]{\delta}} \norm{f_{J}}_{p}.
\qedhere
\end{split}
\end{equation}
\end{proof}

\begin{proposition}\label{prop:bourgain-demeter}
Let $2 \leq p \leq 2 + \frac{4n}{d}$, and suppose that
\begin{equation}
\label{eq:40}
\Dec^{p}(\delta) \lesssim \delta^{-\eta}
\end{equation}
for some $\eta = d(1/2-1/p) + \sigma$ with $\sigma>0$.
Then, for every $K$, we have
\begin{equation}
\label{eq:2}
\MulDec^{p}(\delta,K) \lesssim_{K} \delta^{-\eta+\tilde{\eta}(\sigma)},
\end{equation}
where $\tilde{\eta} : (0,\infty) \to (0,\infty)$ is a monotonically increasing function depending on $p$.
\end{proposition}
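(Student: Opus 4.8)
The plan is to control the left-hand side of~\eqref{eq:multilin-Dec} by iterating the ball-inflation recursion of Proposition~\ref{prop:iter}. Fix a $\nu_{K}$-transverse tuple $R_{1},\dots,R_{M}\in\Part{1/K}$ with $1\le M\le K^{d}$, fix functions $f_{J}$ ($J\in\Part[R_{i}]{\delta}$), and abbreviate $Y:=\avprod\ell^{2}\text{-free quantity }\ell^{p}_{J\in\Part[R_{i}]{\delta}}\norm{f_{J}}_{p}$ for the right-hand side of~\eqref{eq:multilin-Dec}. By the local-decoupling passage of Remark~\ref{rem:global-to-local} together with Bernstein's inequality (each $f_{R_{i}}$ is Fourier-supported in a box that fits, up to constants, inside a $K$-ball, so that $\norm{f_{R_{i}}}_{\avL^{p}(B(x,K))}\approx_{K}\norm{f_{R_{i}}}_{\avL^{2}(w_{B(x,K)})}$), it is enough to show $A_{p}(1/K)\lesssim_{K}\delta^{-\eta+\tilde\eta(\sigma)}\,Y$. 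Two a priori inputs are needed. First, a trivial ``flat'' bound at every dyadic scale $\rho\in[\delta,1/K]$: passing from $\ell^{2}$ to $\ell^{p}$ by Hölder's inequality at the cost of the number $(K\rho)^{-d}$ of cubes in $\Part[R_{i}]{\rho}$, using Jensen ($\avL^{2}\le\avL^{p}$) and the identity $L^{p}_{x}\norm{g}_{\avL^{p}(w_{B(x,r)})}=\norm{g}_{p}$, rescaling, and finally the hypothesis $\Dec^{p}(\delta/\rho)\lesssim(\delta/\rho)^{-\eta}$ gives
\[
A_{p}(\rho)\lesssim_{K}(K\rho)^{-d(1/2-1/p)}\,(\delta/\rho)^{-\eta}\,Y=C_{K}\,\delta^{-\eta}\,\rho^{\sigma}\,Y,
\]
so at the finest scale $A_{p}(\delta)\lesssim_{K}\delta^{-d(1/2-1/p)}Y$, much better than the coarse-scale bound $A_{p}(1/K)\lesssim_{K}\delta^{-\eta}Y$. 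Second, the algebraic identity $d\bigl(\tfrac12-\tfrac1{\tilde p}\bigr)=\kappa\,d\bigl(\tfrac12-\tfrac1p\bigr)$, which is immediate from $\tfrac1{\tilde p}=\tfrac{1-\kappa}{2}+\tfrac{\kappa}{p}$.

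The core step is to unroll Proposition~\ref{prop:iter} along the scales $\rho_{0}=1/K$, $\rho_{j+1}=\rho_{j}^{2}$, i.e. $\rho_{j}=K^{-2^{j}}$, stopping at the first $N$ with $\rho_{N}\le\delta$ (so $N\approx\log\log(1/\delta)$). Feeding in $\Dec^{p}(\delta/\rho_{j})\lesssim(\delta/\rho_{j})^{-\eta}$ at each level, using $L^{2}$ orthogonality to pass from $A_{p}(\rho_{j})$ to $A_{p}(\rho_{j+1})$, and terminating with the flat bound at $\rho_{N}$, one is led to an estimate of the shape
\[
A_{p}(1/K)\lesssim_{K}
\Bigl(\textstyle\prod_{j=0}^{N-1}\rho_{j}^{-d(1/2-1/\tilde p)(1-\kappa)^{j}}\Bigr)\,
A_{p}(\rho_{N})^{(1-\kappa)^{N}}\,
\textstyle\prod_{j=0}^{N-1}\bigl((\delta/\rho_{j})^{-\eta}Y\bigr)^{\kappa(1-\kappa)^{j}},
\]
where the $\nu_{K}$-factors are summable ($\sum_{j}(1-\kappa)^{j}=\kappa^{-1}$) and contribute only a constant depending on $K$, and the $Y$-powers recombine to $Y^{1}$. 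Inserting $\rho_{j}=K^{-2^{j}}$, cancelling via the identity above, and keeping track of the $\delta$-exponent, the $K$-exponent, and the $Y$-exponent simultaneously, the outcome can be repackaged as a one-term recursion for the gain $g_{m}$ at scale $\rho_{N-m}$ (measured against $\delta^{-\eta}Y$), namely $g_{m+1}=(1-\kappa)g_{m}+\kappa\sigma\,2^{-(N-m)}$ with $g_{0}=\sigma$ at the finest scale, which is solved explicitly. Here the range restriction $2\le p\le 2+\tfrac{4n}{d}$ enters exactly as $\kappa\le\tfrac12$, which is what keeps the geometric ratio $2(1-\kappa)\le1$ under control when $K$-powers are converted into $\delta$-powers via the $2^{-N}$ factors (in the regime $p\le 2(d+n)/d$ one has $\kappa=0$, the gain is preserved at $\sigma$ through every step, and the argument already delivers the sharp multilinear exponent). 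Choosing $\tilde\eta(\sigma)$ to be a convenient positive lower bound for $\sigma$ times the resulting gain, and reading off monotonicity in $\sigma$ directly from the formula, yields the claim.

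The delicate point — the main obstacle — is precisely this last bookkeeping: a careless unrolling produces a gain over $-\eta$ that degrades as $\delta\to0$, so the iteration must be organized (through the judicious stopping rule above, the exact cancellation coming from $d(\tfrac12-\tfrac1{\tilde p})=\kappa d(\tfrac12-\tfrac1p)$, and, where needed, the improvement at the base scale afforded by the transversality of $R_{1},\dots,R_{M}$ in Proposition~\ref{prop:ball-inflation}) so that what survives is a genuine positive function of $\sigma$ and $p$ alone. Two minor technical matters also need attention. The scale $\delta$ is generally not an exact iterate of $1/K$ under squaring; this is handled by applying the flat bound at the last scale $\rho_{N}\in(\delta,\delta^{1/2}]$ and absorbing the discrepancy into $\tilde\eta$. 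And the reduction from $\MulDec^{p}(\delta,K)$ to $A_{p}(1/K)$, as well as the identification $A_{p}(\rho)\approx L^{p}_{x}\avprod\norm{f_{R_{i}}}_{\avL^{2}(w_{B(x,1/\rho)})}$ used at each scale, rests on local $L^{2}$ orthogonality at scale $1/\rho$ and on the comparability of $\avL^{2}$ and $\avL^{p}$ on $K$-balls for the band-limited functions $f_{R_{i}}$, both of which are standard.
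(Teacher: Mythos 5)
Your overall scheme—reduce $\MulDec$ to $A_{p}$, iterate Proposition~\ref{prop:iter} along doubling scales, feed in the hypothesis \eqref{eq:40}, and close with a flat bound at the base—is the right idea, and the algebraic identity $d(\tfrac12-\tfrac1{\tilde p})=\kappa d(\tfrac12-\tfrac1p)$ is used correctly. But there is a genuine gap in where you start the iteration, and it is not a minor bookkeeping issue.

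You initialize at $\rho_{0}=1/K$ and iterate down to $\rho_{N}\approx\delta$ with $\rho_{j}=K^{-2^{j}}$, so $N\approx\log_{2}\log_{K}(1/\delta)\to\infty$ as $\delta\to0$. Unrolling the recursion correctly (note $\rho_{N-m-1}=\delta^{2^{-(m+1)}}$, so the injection at step $m$ is $\kappa\sigma\,2^{-(m+1)}$, not $\kappa\sigma\,2^{-(N-m)}$ as you wrote; your recursion has an index flip) gives a gain exponent
\[
g_{N}=\sigma(1-\kappa)^{N}+\kappa\sigma\,2^{-N}\sum_{j=0}^{N-1}2^{j}(1-\kappa)^{j}
\asymp \sigma(1-\kappa)^{N},
\]
where the asymptotics follow because $2^{-N}\sum_{j=0}^{N-1}2^{j}(1-\kappa)^{j}\lesssim(1-\kappa)^{N}$ whenever $\kappa<\tfrac12$. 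Since $N$ grows with $\delta$, $(1-\kappa)^{N}\to0$ for every $\kappa>0$, so the $\delta$-gain $\delta^{\sigma(1-\kappa)^{N}}$ is $o(\delta^{c})$ for every $c>0$: it is \emph{not} a fixed positive power of $\delta$, and no fixed $\tilde\eta(\sigma)>0$ can be extracted. (With your flipped recursion the limit looks like $\kappa\sigma/(1+\kappa)$, which is precisely the misleading feature; the correct recursion injects most of the $\sigma$-gain at the finest scales, where it is then damped by high powers of $1-\kappa$.) The bound you end up with is therefore of the form $C_{K}\delta^{-\eta}\cdot o(1)$ rather than $C_{K}\delta^{-\eta+\tilde\eta(\sigma)}$, and cannot be fed into the inductive decrease of $\eta$ in Theorem~\ref{thm:dec:general}.

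The paper's proof fixes exactly this: it does \emph{not} start the iteration at $1/K$. Instead it fixes a finite $m$, sets $\rho=\delta^{2^{-m}}$, and iterates Proposition~\ref{prop:iter} only $m$ times down to $\delta$. The initial reduction from $\MulDec$ to $A_{p}(\rho)$ then costs a $\delta$-power $\rho^{-d/2}=\delta^{-d2^{-m-1}}$ (rather than the harmless $K^{d/2}$ you pay), but the iteration gain $\rho^{\kappa\sigma\sum_{l<m}2^{l}(1-\kappa)^{l}}$ is also a genuine $\delta$-power, and since $\kappa\le\tfrac12$ implies $2^{l}(1-\kappa)^{l}\ge1$, choosing $m=\lceil d/(2\kappa\sigma)\rceil$ makes the net $\rho$-exponent nonnegative. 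The surviving $\delta$-gain is $\sigma(1-\kappa)^{m}$ with this fixed $m$, which is the required $\tilde\eta(\sigma)>0$, monotone in $\sigma$. The key point you are missing is that $m$ must be chosen depending on $d,\sigma,p$ \emph{but not on $\delta$}; the trade-off is accepting a controllable polynomial loss at the reduction step rather than a harmless constant loss.
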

\begin{proof}
Choose $\nu_{K}$-transverse $R_{1},\dotsc,R_{M} \in \Part{1/K}$.
Choose functions $f_{J}$ with
\[
\ell^{p}_{J \in \Part[R_i]{\delta}} \norm{f_J}_{p} = 1.
\]
Let $m\in\N$ be chosen later.
It suffices to consider $\delta$ that are powers of $2^{2^{m}}$.
Let $\rho=\delta^{2^{-m}}$.
Then
\begin{equation}
\label{eq:1}
\begin{split}
\MoveEqLeft\relax
L^{p}_{x} \avprod \norm{f_{R_i}}_{\avL^p(B(x,K))}\\
&=
L^{p}_{x} \avL^{p}_{\tilde{x} \in B(x,1/\rho)} \avprod \norm{f_{R_i}}_{\avL^p(B(\tilde{x},K))}\\
\text{by H\"older's inequality }&\leq
L^{p}_{x} \avprod \avL^{p}_{\tilde{x} \in B(x,1/\rho)} \norm{f_{R_i}}_{\avL^p(B(\tilde{x},K))}\\
&\lesssim
L^{p}_{x} \avprod \norm{f_{R_i}}_{\avL^p(B(x,1/\rho))}\\
\text{by Minkowski's inequality}&\leq
L^{p}_{x} \avprod \ell^{1}_{J\in\Part[R_i]{\rho}} \norm{f_{J}}_{\avL^p(B(x,1/\rho))} \\
\text{by H\"older's inequality }&\leq
\rho^{-d/2}
L^{p}_{x} \avprod \ell^{2}_{J\in\Part[R_i]{\rho}} \norm{f_{J}}_{\avL^p(B(x,1/\rho))}\\
\text{by Bernstein's inequality}&\lesssim
\rho^{-d/2}
L^{p}_{x} \avprod \ell^{2}_{J\in\Part[R_i]{\rho}} \norm{f_{J}}_{\avL^2(w_{B(x,1/\rho)})}\\
&=
\rho^{-d/2}
A_p(\rho)
\end{split}
\end{equation}

Iterating Proposition~\ref{prop:iter}, starting with $\rho=\delta^{2^{-m}}$, until we get to $\rho=\delta$, at which point we use H\"older's inequality, we get
\begin{align*}
A_{p}(\rho)
&\lesssim
\prod_{l=0}^{m-1} \Bigl( C\nu^{-1/\tilde{p}} \rho^{- 2^{l} d (1/2-1/\tilde{p})} \Dec(\delta/\rho^{2^{l}})^{\kappa} \Bigr)^{(1-\kappa)^{l}}
\cdot
(\delta^{-d(1/2-1/p)})^{(1-\kappa)^{m}}.
\end{align*}
By the assumption on the linear decoupling constant, this is
\begin{equation}
\label{eq:45}
\begin{split}
&\lesssim_{m,\nu}
\prod_{l=0}^{m-1} \Bigl( \rho^{- 2^{l} d (1/2-1/\tilde{p})} \delta^{-\eta \kappa}/\rho^{-2^{l} \eta \kappa} \Bigr)^{(1-\kappa)^{l}}
\cdot \delta^{-d(1/2-1/p)(1-\kappa)^{m}}
\\ &=
\delta^{-\eta \kappa \sum_{l=0}^{m-1} (1-\kappa)^{l}} \rho^{(\eta \kappa - d(1/2-1/\tilde{p})) \cdot \sum_{l=0}^{m-1} 2^{l} (1-\kappa)^{l}} \delta^{-d(1/2-1/p)(1-\kappa)^{m}}
\\ &=
\delta^{-\eta (1- (1-\kappa)^{m})} \rho^{(\kappa \sigma) \cdot \sum_{l=0}^{m-1} 2^{l} (1-\kappa)^{l}} \delta^{-(\eta-\sigma)(1-\kappa)^{m}},
\end{split}
\end{equation}
where we used that $d(1/2-1/\tilde{p}) = \kappa d(1/2-1/p) = \kappa(\eta-\sigma)$.

Recalling the factor $\rho^{-d/2}$ from \eqref{eq:1} and taking a supremum over all $R_i$ and $f_{J}$ as above, we deduce
\begin{equation}
\label{eq:49}
\MulDec(\delta, K)
\lesssim_{m, \nu}
\delta^{-\eta + \sigma(1-\kappa)^{m}} \rho^{-d/2+(\kappa\sigma) \cdot \sum_{l=0}^{m-1} 2^{l} (1-\kappa)^{l}}.
\end{equation}

By the hypothesis on $p$, we have $\kappa \leq 1/2$.
Hence,
\[
\eqref{eq:49}
\leq
\delta^{-\eta + \sigma(1-\kappa)^{m}} \rho^{-d/2+(\kappa\sigma) \cdot m}.
\]
Choosing $m = \lceil d/(2\kappa\sigma) \rceil$, we obtain the claim \eqref{eq:2} with $\tilde{\eta}(\sigma) = \sigma (1-\kappa)^{\lceil d/(2\kappa\sigma) \rceil}$.
\end{proof}

\begin{theorem}\label{thm:dec:general}
Let $2 \leq p \leq 2 + \frac{4n}{d}$.
Assume Hypothesis~\ref{hyp:transverse} and Hypothesis~\ref{hyp:lower-dim} with $\Lambda \geq d(1/2-1/p)$.
Then, for every $\epsilon>0$, we have
\[
\Dec^{p}(\delta) \lesssim_{\epsilon} \delta^{-\Lambda-\epsilon}.
\]
\end{theorem}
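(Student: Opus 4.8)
The plan is to run the standard Bourgain--Demeter bootstrap on the critical decoupling exponent. Introduce
\[
\gamma := \inf\Set{ \eta \geq 0 \given \sup_{\delta \in 2^{-\N}} \delta^{\eta}\Dec^{p}(\delta) < \infty },
\]
which is finite because the triangle and H\"older inequalities give the trivial bound $\Dec^{p}(\delta) \leq \delta^{-d(1-1/p)}$. That set is upward closed, so $\Dec^{p}(\delta) \lesssim_{\eta} \delta^{-\eta}$ for every $\eta > \gamma$, and the assertion $\Dec^{p}(\delta) \lesssim_{\epsilon} \delta^{-\Lambda-\epsilon}$ is equivalent to $\gamma \leq \Lambda$. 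I would argue by contradiction, assuming $\gamma > \Lambda$ and extracting a strictly better exponent.

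So fix a small $\epsilon>0$, to be constrained below, and start from the a priori bound $\Dec^{p}(\delta) \lesssim_{\epsilon} \delta^{-(\gamma+\epsilon)}$. Write $\gamma+\epsilon = d(1/2-1/p) + \sigma$; since $\gamma+\epsilon > \Lambda \geq d(1/2-1/p)$ we have $\sigma>0$, and in fact $\sigma \geq \gamma-\Lambda$. Hence Proposition~\ref{prop:bourgain-demeter} applies with $\eta=\gamma+\epsilon$ and gives, for every $K$, $\MulDec^{p}(\delta,K) \lesssim_{K,\epsilon} \delta^{-(\gamma+\epsilon)+\tilde{\eta}(\sigma)}$, where $\tilde{\eta}$ is the monotonically increasing function from that proposition, so $\tilde{\eta}(\sigma) \geq \tilde{\eta}(\gamma-\Lambda) > 0$. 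Now take $K=K(\epsilon)$ as provided by Proposition~\ref{181209prop5.5} (both Hypothesis~\ref{hyp:transverse} and Hypothesis~\ref{hyp:lower-dim} are in force) and insert this into
\[
\Dec^{p}(\delta) \lesssim_{\epsilon} \delta^{-\Lambda-\epsilon} + \delta^{-\epsilon} \max_{\delta \leq \delta' \leq 1} \Bigl[ (\delta'/\delta)^{\Lambda}\MulDec^{p}(\delta',K) \Bigr].
\]
The bracket is $\lesssim_{K,\epsilon} \delta^{-\Lambda}(\delta')^{\Lambda-\gamma-\epsilon+\tilde{\eta}(\sigma)}$, so the maximum over $\delta'\in[\delta,1]$ equals $\delta^{-\Lambda}$ when the exponent $\Lambda-\gamma-\epsilon+\tilde{\eta}(\sigma)$ is nonnegative (attained at $\delta'=1$), and $\delta^{-\gamma-\epsilon+\tilde{\eta}(\sigma)}$ otherwise (attained at $\delta'=\delta$).

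In the first case we obtain $\Dec^{p}(\delta) \lesssim_{\epsilon} \delta^{-\Lambda-\epsilon}$, hence $\gamma \leq \Lambda+\epsilon$; in the second case, since then $-\gamma-\epsilon+\tilde{\eta}(\sigma)<-\Lambda$, that term dominates and $\Dec^{p}(\delta) \lesssim_{\epsilon} \delta^{-(\gamma+2\epsilon-\tilde{\eta}(\sigma))}$, hence $\gamma \leq \gamma+2\epsilon-\tilde{\eta}(\sigma)$, i.e.\ $\tilde{\eta}(\sigma)\leq 2\epsilon$. Choosing $\epsilon < \min\bigl(\gamma-\Lambda,\ \tfrac12\tilde{\eta}(\gamma-\Lambda)\bigr)$, which is a fixed positive quantity under the contradiction hypothesis, makes both conclusions impossible, since then $\Lambda+\epsilon<\gamma$ and $\tilde{\eta}(\sigma)\geq\tilde{\eta}(\gamma-\Lambda)>2\epsilon$. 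Therefore $\gamma\leq\Lambda$, which is the theorem.

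I expect essentially all the substantive difficulty to reside in the results already established: the curvature-gaining lower-dimensional decoupling (Theorems~\ref{thm:near-hyperplane}--\ref{thm:dim-reduction-variety}), the Bourgain--Guth reduction (Proposition~\ref{181209prop5.5}), and the ball-inflation iteration (Proposition~\ref{prop:bourgain-demeter}). The only points to watch in the present step are the matching of quantifiers — that a single $K$ may be used simultaneously in Proposition~\ref{181209prop5.5} and in Proposition~\ref{prop:bourgain-demeter} (the latter holds ``for every $K$''), and that the gain function $\tilde{\eta}$ depends only on $p$ (and $d,n$), not on $K$ — and the genuinely delicate observation that it suffices for $\tilde{\eta}$ to be merely positive rather than bounded below: we run the argument at the critical exponent $\gamma$, where the relevant argument $\sigma=\gamma-d(1/2-1/p)+\epsilon$ of $\tilde{\eta}$ stays bounded away from $0$ precisely by the strict gap $\gamma-\Lambda>0$. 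This is also why the conclusion carries an $\epsilon$-loss and cannot be strengthened to an unqualified $\delta^{-\Lambda}$.
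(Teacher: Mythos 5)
Your proof is correct and follows essentially the same bootstrap strategy as the paper: feed the Bourgain--Demeter multilinear gain (Proposition~\ref{prop:bourgain-demeter}) into the Bourgain--Guth reduction and iterate to lower the decoupling exponent toward $\Lambda$. The only cosmetic differences are that you invoke the already-iterated Proposition~\ref{181209prop5.5} rather than re-iterating the one-step Proposition~\ref{prop:bourgain-guth-arg} by hand, and you close the bootstrap with a critical-exponent contradiction (at $\gamma=\inf$) rather than the paper's explicit ``decrease $\eta$, then iterate'' phrasing; these are two standard, interchangeable ways to finish the argument.
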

\begin{proof}
It is easy to see that $\Dec(\delta) \lesssim \delta^{-\eta}$ for some $\eta = d(1/2-1/p) + \sigma$ with $\sigma>0$.
If $\eta \leq \Lambda$, then we are done.
Otherwise, we will be able to decrease $\eta$.
Substituting the conclusion of Proposition~\ref{prop:bourgain-demeter} into the conclusion of Proposition~\ref{prop:bourgain-guth-arg} gives
\begin{align*}
\Dec(\delta)
&\leq
C_{\epsilon} \sup_{\log \tilde{K} \approx \log K}
\tilde{K}^{\Lambda+\epsilon} \Dec(\delta \tilde{K})
+
C_{K} \MulDec(\delta, K)\\
&\leq
C_{\epsilon} \sup_{\log \tilde{K} \approx \log K}
\tilde{K}^{\Lambda+\epsilon} (\delta \tilde{K})^{-\eta}
+
C_{K} \delta^{-\eta'}
\end{align*}
with $\eta' = \eta - \tilde{\eta}(\sigma)$ for any $K$.
Iterating this inequality $O(\frac{\log \delta^{-1}}{\log K})$ times, we obtain
\[
\Dec(\delta)
\lesssim
C_{K} C_{\epsilon}^{C \abs{\log\delta}/\log K } \delta^{-\max(\Lambda+\epsilon,\eta')}.
\]
Choosing $K$ large enough in terms of $C_{\epsilon}$, this gives
\[
\Dec(\delta)
\lesssim_{\epsilon}
\delta^{-\max(\Lambda,\eta')-2\epsilon}.
\]
Thus we have succeeded in decreasing $\eta$.
Iterating this, we can make $\eta$ arbitrarily close to $\Lambda$.
\end{proof}

\section{Specific surfaces}\label{sec:spec}

\subsection{Lower dimensional decoupling}\label{sec:spec:lower-dim}
In this section, we verify Hypothesis~\ref{hyp:lower-dim}.
\begin{lemma}\label{181212lem5.7}
Let $d$ and $n$ be as in \eqref{d_n_constraint} and assume \eqref{low_dim_assumption}.
Then, for every $2 \leq p \leq 2+\frac{4n}{d}$ and every $\epsilon>0$, we have
\begin{equation}\label{181212e5.34}
\Dec_{\calH}^{p}(\delta)
\lesssim_{\epsilon}
\big(\frac{1}{\delta} \big)^{d(\frac 1 2-\frac 1 p)+\epsilon}
\end{equation}
for every hyperplane $\calH$ given by \eqref{181212e4.2} with $\abs{\nabla \calL}\lesssim 1$.
\end{lemma}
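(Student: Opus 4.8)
The plan is to deduce the bound from the decoupling theorem of Bourgain and Demeter for non-degenerate, possibly hyperbolic, paraboloids \cite{MR3374964,MR3736493}, using \eqref{low_dim_assumption} to produce a non-degenerate quadratic sub-surface. First I would reduce to a single quadratic form: applying \eqref{low_dim_assumption} to the hyperplane $H=\calH$ produces $\lambda_1,\dots,\lambda_n\in\R$ such that $Q:=\lambda_1P_1'+\dots+\lambda_nP_n'$, a quadratic form on $\R^{d-1}$, has $\rank Q\ge d-2$; after a linear change of coordinates in the target $\R^n$ we may assume $P_1'=Q$. Then I would run the standard slicing argument: writing the ambient space as $\R^{d-1}_{x'}\times\R_{y_1}\times\R^{n-1}_{y''}$ with $y_1$ dual to $P_1'$, the Fourier support of $f_\theta$ restricted to any slice $\{y''=\text{const}\}$ lies in the uncertainty box attached to $\theta$ for the codimension-one surface $\{(t',Q(t'))\}$; applying codimension-one decoupling on each slice and integrating in $y''$ yields $\Dec_\calH^p(\delta)\le\Dec_Q^p(\delta)$, where $\Dec_Q^p$ is the decoupling constant of $\{(t',Q(t'))\}\subset\R^d$.

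Next I would treat this single-form surface according to whether $\rank Q=d-1$ or $\rank Q=d-2$ (these are the only possibilities, since $Q$ acts on $\R^{d-1}$), after splitting off the kernel of $Q$ by a linear change of the base variable $t'$. If $\rank Q=d-1$, then $\{(t',Q(t'))\}$ is a non-degenerate paraboloid, and \cite{MR3374964,MR3736493}, together with interpolation against the trivial $\ell^\infty L^\infty$ bound above the $\ell^2$-critical exponent, gives $\Dec_Q^p(\delta)\lesssim_\epsilon\delta^{-\max((d-1)(\frac12-\frac1p),\,(d-1)-\frac{2d}{p})-\epsilon}$. If $\rank Q=d-2$, pick coordinates so that $Q(t')=Q_0(t_1,\dots,t_{d-2})$ with $Q_0$ non-degenerate; then the uncertainty boxes split as products and $\{(t',Q(t'))\}$ decouples as the non-degenerate $(d-2)$-dimensional paraboloid $\{(s,Q_0(s))\}$ tensored with a flat line in the $t_{d-1}$-direction: decoupling first in the flat variable (one-dimensional flat $\ell^pL^p$ decoupling has constant $\lesssim\delta^{-(1-2/p)}$) and then in the curved variables gives $\Dec_Q^p(\delta)\lesssim_\epsilon\delta^{-(1-2/p)-\max((d-2)(\frac12-\frac1p),\,(d-2)-\frac{2(d-1)}{p})-\epsilon}$.

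It then remains to check that these exponents are at most $d(\frac12-\frac1p)$ for $2\le p\le2+\frac{4n}{d}$: the inequality $(d-1)(\frac12-\frac1p)\le d(\frac12-\frac1p)$ is immediate, the identity $(1-2/p)+(d-2)(\frac12-\frac1p)=d(\frac12-\frac1p)$ is a one-line computation, and both ``flat-type'' branches are $\le d(\frac12-\frac1p)$ exactly when $p\le\frac{2d}{d-2}$. Thus the whole lemma comes down to the elementary inequality $2+\frac{4n}{d}\le\frac{2d}{d-2}$ (interpreting the right-hand side as $+\infty$ when $d=2$), which is precisely what the constraint \eqref{d_n_constraint} on the pair $(d,n)$ is designed to ensure; this is the real content of the lemma, and the only place the restriction on $(d,n)$ enters. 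The remaining point to attend to is uniformity in $\calH$: since the set of admissible $\calL$ (with $\abs{\nabla\calL}\lesssim1$) is compact and, by \eqref{low_dim_assumption}, for each $\calH$ the largest achievable $(d-2)$-th singular value of $\sum_j\lambda_jP_j'$ over unit vectors $\lambda$ is continuous and strictly positive, it is bounded below, so the paraboloids produced above have curvature bounded uniformly from below and the implied constants do not depend on $\calH$.
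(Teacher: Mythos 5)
Your overall strategy is the same as the paper's: use \eqref{low_dim_assumption} to produce a form $Q$ of rank $\ge d-2$ on $\calH$, reduce to the single-form surface by a Fubini/slicing argument, obtain the $\ell^pL^p$ bound from the nondegenerate paraboloid theorem of Bourgain--Demeter plus flat decoupling, and observe that the exponent check comes down to $n(d-2)\le d$ (your inequality $2+\frac{4n}{d}\le\frac{2d}{d-2}$ is identical to the paper's $2+\frac{4n}{d}\le 2+\frac{4}{d-2}$). That identification of the arithmetic constraint as the heart of the restriction \eqref{d_n_constraint} matches the paper precisely.

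There is, however, one step that does not hold up as written: your uniformity argument. The case split on $\rank Q=d-1$ versus $\rank Q=d-2$ is not a continuous condition on $\calH$, and your compactness argument only gives a uniform lower bound on the $(d-2)$-th singular value of $\sum_j\lambda_jP_j'$. In the branch $\rank Q=d-1$ you invoke Bourgain--Demeter for a nondegenerate $(d-1)$-dimensional paraboloid, and the constant there depends on the $(d-1)$-th singular value of $Q$, which is \emph{not} controlled by your compactness argument and can degenerate to $0$ as $\calH$ varies. This matters because for the endpoint exponent (e.g.\ $d=4$, $n=2$, $p=4$) the target $d(\tfrac12-\tfrac1p)$ is achieved with no slack. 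The paper sidesteps this entirely by never using the full rank of $Q$: it always restricts further to a $(d-2)$-dimensional subspace $\calH'\subset\calH$ on which $\lambda\cdot P$ is nondegenerate, applies the codimension-one theorem there, and then flat-decouples the single remaining direction of $\calH$. Doing the same (i.e.\ always working with a $(d-2)$-dimensional nondegenerate slice even when $\rank Q=d-1$, treating the extra direction as flat) removes the case distinction and makes your compactness argument suffice.
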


\begin{proof}[Proof of Lemma~\ref{181212lem5.7}.]
The case $d=1$ is trivial, so we assume $d\geq 2$.

By a compactness argument, the implicit constant can be made uniform in $\calH$, so we concentrate on showing \eqref{181212e5.34} for any fixed $\calH$.
It suffices to find a subspace $\calH' \subset \calH$ of dimension $d-2$ on which the decoupling exponent, that is, the power of $\delta^{-1}$ in $\Dec^{p}(\delta)$  of \eqref{eq:dec-const}, is $(d-2)(1/2-1/p)$; then we can apply flat decoupling, see e.g.\ \cite[Appendix B]{arxiv:1811.02207}, in the remaining direction.

By the hypothesis \eqref{low_dim_assumption}, there exist $\calH' \subset \calH$ of dimension $d-2$ and $\lambda$ such that $\lambda_{1}P_{1}+\dotsb+\lambda_{n}P_{n}$ has full rank on $\calH'$.
By a change of variables, we may assume $\calH'=\Set{(t_{1},\dotsc,t_{d-2},0,0)}$ and $\lambda = (1,0,\dotsc,0)$.
But in this case, the claim is given by Theorem~\ref{main_theorem} for $n=1$ and $d$ replaced by $d-2$ in view of \eqref{eq:codim1-hypotheses}.
The only thing to be verified is the restriction on the exponents
\[
2+\frac{4n}{d} \leq 2+\frac{4}{d-2},
\]
which is equivalent to $n(d-2) \leq d$, and is satisfied in the cases listed in \eqref{d_n_constraint}. Also, as mentioned in the introduction, this is how the constraint on $n$ and $d$ shows up. 
\end{proof}

\subsection{Transversality}\label{sec:spec:transverse}
In this section, we verify Hypothesis~\ref{hyp:transverse}.
The case $n=1$ will be verified in Lemma~\ref{lem:BCCT:n=1}, the case $n=2, d\ge 3$ in Lemma~\ref{180717lem5.2}, and the remaining cases in Lemma~\ref{181203lemma6.4}.

We write $\R^{d+n}=\R^{d}\oplus \R^{n}=S_1\oplus S_2$.
\begin{lemma}\label{lem:proj-dim-lower-bd:simple}
Let $V \subset \R^{d+n}$ be a subspace.
Then
\[
\dim( \pi_{t}(V) ) \geq \dim (V \cap S_{1})
\]
for all $t$.
\end{lemma}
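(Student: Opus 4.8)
The plan is to exhibit an explicit injection from $V \cap S_1$ into $\pi_t(V)$ — or, more precisely, to show that $\pi_t$ restricted to $V \cap S_1$ is itself injective, which immediately gives $\dim \pi_t(V) \geq \dim \pi_t(V\cap S_1) = \dim(V \cap S_1)$. So the real content is the claim that $\pi_t|_{S_1}$ is injective for every $t$. First I would recall that $S_1 = \R^d \oplus \{0\}$ is precisely the span of the "base" coordinate directions $e_1,\dotsc,e_d$, and that the tangent space $V(t)$ at the point $(t,P(t))$ of $\calS_{d,n}$ is spanned by the tangent vectors $T_j(t) = e_j + \sum_{k=1}^n \partial_j P_k(t)\, e_{d+k}$, $j=1,\dotsc,d$. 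The key structural observation is that $V(t)$ is a graph over $S_1$: the projection from $V(t)$ onto $S_1$ along $S_2 = \{0\}\oplus\R^n$ is a linear isomorphism, since the matrix of $T_j(t)$ in the $e_1,\dotsc,e_d$ block is the identity.

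Next I would translate this into a statement about the orthogonal projection $\pi_t$ onto $V(t)$. Because $V(t)$ is a graph over $S_1$ (i.e. $V(t) \oplus S_2 = \R^{d+n}$), the orthogonal projection $\pi_t\colon\R^{d+n}\to V(t)$, when restricted to $S_1$, is a bijection onto $V(t)$. Indeed, $\ker \pi_t = V(t)^\perp$, and $V(t)^\perp \cap S_1 = \{0\}$: if $v \in S_1$ were orthogonal to all of $V(t)$, then in particular $\langle v, T_j(t)\rangle = 0$ for all $j$, but writing $v = \sum_j c_j e_j$ gives $\langle v, T_j(t)\rangle = c_j$, forcing $v = 0$. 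Hence $\pi_t|_{S_1}$ is injective, and therefore so is its further restriction $\pi_t|_{V\cap S_1}$.

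Putting this together: since $V \cap S_1 \subseteq S_1$ and $\pi_t|_{S_1}$ is injective, the image $\pi_t(V \cap S_1)$ has dimension exactly $\dim(V \cap S_1)$; and $\pi_t(V \cap S_1) \subseteq \pi_t(V)$, so $\dim \pi_t(V) \geq \dim \pi_t(V\cap S_1) = \dim(V\cap S_1)$, as desired.

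I do not expect a genuine obstacle here — this is a soft linear-algebra fact and the only thing to be careful about is that $\pi_t$ is the \emph{orthogonal} projection (not the projection along $S_2$), so one must argue via $\ker\pi_t = V(t)^\perp$ rather than directly; the transversality of $S_1$ and $V(t)^\perp$, which is what makes $\pi_t|_{S_1}$ injective, is the one computation to pin down, and it reduces to the identity $\langle e_i, T_j(t)\rangle = \delta_{ij}$ noted above.
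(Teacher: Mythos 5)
Your proof is correct and complete. The paper omits the proof entirely, remarking only that it is straightforward; your argument — that $\langle e_i, T_j(t)\rangle = \delta_{ij}$ forces $V(t)^\perp \cap S_1 = \{0\}$, hence $\pi_t|_{S_1}$ (and a fortiori $\pi_t|_{V\cap S_1}$) is injective — is exactly the natural way to fill it in, including the correct care taken to argue via $\ker\pi_t = V(t)^\perp$ rather than treating $\pi_t$ as a coordinate projection.
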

The proof of this lemma is straightforward, and we leave it out. 

\begin{lemma}\label{lem:proj-dim-lower-bd}
Suppose that \eqref{bl_assumption} holds.
Let $V \subset \R^{d+n}$ be a subspace, and let
\[
0 \leq H_{1} \leq \min(\dim(V\cap S_{1}),d-n),
\quad
0 \leq H_{2} \leq \dim(V/S_{1}).
\]
Then
\[
\dim( \pi_{t}(V) ) \geq H_{1}+H_{2}
\]
for all $t$ outside the zero set of some non-trivial polynomial of degree $H_{2}$.
\end{lemma}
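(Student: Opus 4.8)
The plan is to build the subspace $\pi_t(V)$ from a basis chosen to exploit both the product structure $\R^{d+n}=S_1\oplus S_2$ and the transversality hypothesis \eqref{bl_assumption}. First I would pick $H_1$ linearly independent vectors $v_1,\dotsc,v_{H_1}\in V\cap S_1$; since $H_1\le d-n$, I can extend these by $d-n-H_1$ further vectors $\vec w_1,\dotsc,\vec w_{d-n-H_1}\in S_1\cong\R^d$ so that together with $\nabla P_1,\dotsc,\nabla P_n$ they form a basis of $\R^d$. Then I would pick $H_2$ vectors $u_1,\dotsc,u_{H_2}\in V$ whose images in the quotient $V/(V\cap S_1)$, equivalently in $V/S_1$, are linearly independent. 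The point is that, after identifying the tangent space $V(t)$ with $\R^d$ via the parametrization $t\mapsto(t,P(t))$, the projection $\pi_t$ restricted to $S_1$ is essentially an isomorphism onto $V(t)$ (up to the curved part), so the $v_i$ map to $H_1$ independent directions in $\pi_t(V)$ coming from inside $S_1$; meanwhile the $u_j$ carry genuine $S_2$-components that cannot be cancelled by anything in $S_1$.

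The key step is to write down a determinant that certifies $\dim\pi_t(V)\ge H_1+H_2$ and to show it is a nonzero polynomial of degree $H_2$. Concretely, $\dim\pi_t(V)$ is the rank of the matrix whose columns are $\langle$ basis vector of $V$, tangent vector $T_k(t)\rangle$, $k=1,\dotsc,d$, as in the proof of Lemma~\ref{lem:transverse}. I would form the $(H_1+H_2)\times(H_1+H_2)$ submatrix using rows indexed by $v_1,\dotsc,v_{H_1},u_1,\dotsc,u_{H_2}$ and columns indexed by the tangent directions dual to $\vec w_1,\dotsc,\vec w_{d-n-H_1}$ (these pair trivially with the $S_2$-part) together with the $n$ tangent directions involving $\nabla P_1,\dotsc,\nabla P_n$, selecting among the latter $H_2$ of them. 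Because the $v_i$ lie in $S_1$, the block of this submatrix in the $v$-rows and the $\nabla P$-columns has controlled structure, and the hypothesis \eqref{bl_assumption} — which says precisely that $\det[\nabla P_1;\dotsc;\nabla P_n;\vec w_1;\dotsc;\vec w_{d-n}]\not\equiv0$ — forces the relevant minor not to vanish identically. The entries involving $\nabla P_j(t)$ are linear in $t$, while the $\vec w$-columns contribute constants, so the total degree in $t$ is exactly the number of $\nabla P$-columns used, namely $H_2$.

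I expect the main obstacle to be bookkeeping: verifying that one can \emph{simultaneously} complete the $v_i$ inside $S_1$ to interact well with the $\nabla P_j$ and choose the $u_j$ so that their $S_2$-components are independent modulo the span of everything else, and then isolating a single minor that is manifestly a nonzero degree-$H_2$ polynomial. The cleanest route is probably a change of variables (as used repeatedly in the paper, e.g.\ in the proof of Lemma~\ref{181212lem5.7}) normalizing $V\cap S_1$ and the $\vec w_i$ to coordinate subspaces, after which the determinant in question factors as a product of a constant block, an identity block, and a minor of the matrix $[\nabla P_1;\dotsc;\nabla P_n]$ restricted to suitable rows, whose non-vanishing is exactly \eqref{bl_assumption}. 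One should also double-check the degenerate extremes $H_1=0$ (reduces to a statement purely about the $\nabla P_j$, essentially \eqref{bl_assumption} itself) and $H_2=0$ (reduces to Lemma~\ref{lem:proj-dim-lower-bd:simple}), which serve as consistency checks and as the base of the argument.
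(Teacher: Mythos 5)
Your overall strategy is the same as the paper's: you choose a basis of a subspace of $V$ with $H_1$ vectors $v_i=(w_i,0)$ in $V\cap S_1$ and $H_2$ vectors $v_{H_1+i}=(w_{H_1+i},z_{H_1+i})$ whose second components are independent, and you consider the $(H_1+H_2)\times d$ matrix $\bigl(\langle v_i, n_j(t)\rangle\bigr)_{i,j}$ whose rows are $w_i + \nabla P(t)\,z_i$, aiming to show it has rank $H_1+H_2$ outside the zero set of a degree-$H_2$ polynomial. The role of \eqref{bl_assumption} is also correctly identified. However, your plan for exhibiting a nonzero minor has a genuine gap.

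First, a bookkeeping problem: you propose to choose $(d-n-H_1)$ columns ``dual to the $\vec w$'s'' plus $H_2$ columns ``involving $\nabla P$'', which is $(d-n-H_1)+H_2$ columns in total, but you need an $(H_1+H_2)\times(H_1+H_2)$ minor. This matches only when $H_1=(d-n)/2$. More fundamentally, there is no canonical splitting of the $d$ column indices into ``$\vec w$-type'' and ``$\nabla P$-type'': every column $j$ of the matrix has the same form $(w_i + \nabla P(t)z_i)_j$; all of them carry $\nabla P$-dependence through the rows $i>H_1$. So the block structure you envision does not exist prior to a very careful change of variables that you do not spell out, and even after such a normalization the choice of a single good minor is not clear.

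Second, and more importantly, trying to identify a \emph{specific} nonzero minor is harder than necessary. A fixed choice of $H_1+H_2$ columns may well give a minor that vanishes identically in $t$, even though the matrix generically has full rank. The paper sidesteps this with a cleaner contrapositive: assume \emph{all} $(H_1+H_2)\times(H_1+H_2)$ minors vanish identically, pass to their degree-$H_2$ homogeneous parts (which are exactly the minors of the simpler matrix $\bigl(w_1,\dots,w_{H_1},\nabla P(t)z_{H_1+1},\dots,\nabla P(t)z_{H_1+H_2}\bigr)$), extend $w_1,\dots,w_{H_1}$ to $\tilde w_1,\dots,\tilde w_{d-n}$ and $z_{H_1+1},\dots,z_{H_1+H_2}$ to a basis $\tilde z_1,\dots,\tilde z_n$ of $\R^n$, and then factor
\[
\bigl(\tilde w_1,\dots,\tilde w_{d-n}, \nabla P(t)\tilde z_1,\dots,\nabla P(t)\tilde z_n\bigr)
=
\bigl(\tilde w_1,\dots,\tilde w_{d-n}, \nabla P(t)\bigr)
\begin{pmatrix} I_{d-n} & 0\\ 0 & \tilde z_1,\dots,\tilde z_n\end{pmatrix}.
\]
The right factor is invertible, so degenerate rank would force the left factor to be singular for all $t$, contradicting \eqref{bl_assumption}. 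This is the mechanism your proposal is missing: passing to the top-degree homogeneous part of the minors (so that the $w_{H_1+i}$ drop out and only the $\nabla P(t)z_{H_1+i}$ contributions survive) and then the factorization that makes \eqref{bl_assumption} directly applicable. Your ``change of variables normalizing $V\cap S_1$'' intuition points in this direction, but without the homogeneous-part reduction and the factorization, the argument as stated does not close.
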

\begin{remark}
Since $\R^{d+n} = S_{1} \oplus S_{2}$, we have
\[
\dim(V) = \dim(V \cap S_{1}) + \dim(V/S_{1}).
\]
\end{remark}
\begin{proof}[Proof of Lemma \ref{lem:proj-dim-lower-bd}.]
By the hypotheses on $V$, we can choose a linearly independent set $(v_{i})_{1\leq i \leq H_{1}+H_{2}} \subset V$ with $v_{i}=(w_{i},z_{i})\in \R^{d}\times\R^{n}$ such that $z_{1}=\dotsb=z_{H_{1}}=0$ and $z_{H_{1}+1},\dotsc,z_{H_{1}+H_{2}}$ are linearly independent.

Consider the vectors $n_j(t) := (e_{j}, \nabla P(t)\cdot e_j) \in \R^{d} \times \R^{n}$, $j=1,\dotsc,d$, which form a basis of the tangent space of the surface $\calS_{d, n}$ at the point $t\in \R^d$.
Then
\begin{equation}\label{181212e5.9}
\dim(\pi_t(V))
=
\rank \big(\langle v_i, n_j(t) \rangle \big)_{\substack{1\le i\le H_{1}+H_{2}\\1\le j\le d}}.
\end{equation}
The matrix on the right-hand side of \eqref{181212e5.9} can be written as
\begin{multline*}
\big(\langle v_i, n_j(t) \rangle \big)_{\substack{1\le i\le H_{1}+H_{2}\\1\le j\le d}}
=
\big(\langle w_i, e_j \rangle + \langle z_{i}, \nabla P(t)\cdot e_j \rangle  \big)_{\substack{1\le i\le H_{1}+H_{2}\\1\le j\le d}}\\
=
\big(w_i + \nabla P(t) \cdot z_{i}\big)_{1\leq i \leq H_{1}+H_{2}}.
\end{multline*}
Here, $w_i, e_j, z_i$ are all treated as column vectors.
Denote $H=H_1+H_2$.
Since $\nabla P(t)$ is linear in $t$, each $H\times H$ minor determinant of this matrix is a polynomial of degree at most $H_{2}$ in $t$.
Suppose for a contradiction that these minor determinants vanish identically.
Then also their degree $H_{2}$ homogeneous parts vanish identically, and they coincide with the corresponding $H\times H$ minor determinants of the matrix
\[
\big(w_{1}, \dotsc,w_{H_{1}}, \nabla P(t)\cdot z_{H_{1}+1}, \dotsc, \nabla P(t)\cdot z_{H_{1}+H_{2}} \big)
\]
Therefore, the latter matrix does not have full rank for any $t$.

Let $\tilde{w}_{1},\dotsc,\tilde{w}_{d-n} \in \R^{d}$ be linearly independent vectors with $\tilde{w}_{j} = w_{j}$ for $j\leq H_{1}$ and $\tilde{z}_{1},\dotsc,\tilde{z}_{n} \in \R^{n}$ be linearly independent vectors with $\tilde{z}_{j}=z_{H_{1}+j}$ for $j \leq H_{2}$.
Then the matrix
\[
\big( \tilde{w}_{1},\dotsc,\tilde{w}_{d-n}, \nabla P(t)\cdot \tilde{z}_{1}, \dotsc, \nabla P(t)\cdot \tilde{z}_{n} \big)
\]
does not have full rank for any $t \in \R^{d}$.
But the latter matrix can be factored as
\[
\begin{pmatrix}
\tilde{w}_{1},\dotsc,\tilde{w}_{d-n}, \nabla P(t)
\end{pmatrix}
\begin{pmatrix}
I_{d-n} & 0\\
0 & \tilde{z}_{1}, \dotsc, \tilde{z}_{n}
\end{pmatrix}.
\]
The latter matrix is invertible, and the former is invertible for all $t$ outside a proper subvariety by the hypothesis \eqref{bl_assumption}.
This contradiction finishes the proof.
\end{proof}

\begin{lemma}\label{lem:BCCT:n=1}
Let $d\ge 1$ and $n=1$.
For every proper linear subspace $V\subset \R^{d+n}$, it holds that
\begin{equation}
\Set{t \given \dim(\pi_t(V))< \dim(V)}
\end{equation}
is contained in a subvariety of degree $1$.
\end{lemma}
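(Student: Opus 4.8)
The plan is to use that, when $n=1$, the tangent space $V(t)$ of the surface $\calS_{d,1}$ is a hyperplane in $\R^{d+1}$. Consequently the condition $\dim(\pi_t(V)) < \dim(V)$ is equivalent to the statement that the normal line $V(t)^{\perp}$ is contained in $V$, and this will turn out to be a single affine-linear condition on $t$. This is in contrast with Lemma~\ref{lem:proj-dim-lower-bd}, where minor determinants of higher degree appear.

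First I would record that $V(t) = \operatorname{span}\Set{(e_j, \partial_j P_1(t)) \given 1 \le j \le d}$ has orthogonal complement spanned by the nonzero vector $N(t) := (\nabla P_1(t), -1) \in \R^d \times \R$. Since $\ker\pi_t = V(t)^{\perp} = \R N(t)$, the rank--nullity identity gives $\dim(\pi_t(V)) = \dim(V) - \dim(V \cap \R N(t))$, whence
\[
\Set{t \given \dim(\pi_t(V)) < \dim(V)} = \Set{t \given N(t) \in V}.
\]
I would then write the proper subspace $V$ as $V = \bigcap_{i=1}^{c}\ker\langle a_i,\cdot\rangle$ with $c \ge 1$ and $a_i = (b_i,\beta_i) \in \R^d \times \R$ linearly independent. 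Since $P_1$ is a quadratic form, $\nabla P_1$ is linear, say $\nabla P_1(t) = Mt$ for a symmetric matrix $M$ with $\rank(M) = \rank(P_1)$; the condition $N(t) \in V$ then reads $\langle Mb_i, t\rangle = \beta_i$ for $i = 1,\dots,c$, an affine-linear system in $t$.

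Finally I would invoke the nondegeneracy hypothesis: recall that for $n=1$ the assumption \eqref{bl_assumption} is equivalent to $\rank(P_1) = d$, i.e.\ to invertibility of $M$ (cf.\ \eqref{eq:codim1-hypotheses}). If some $b_i = 0$, then $\beta_i \ne 0$ since $a_i \ne 0$, so the $i$-th equation has no solution and $\Set{t \given N(t)\in V}$ is empty. Otherwise every $b_i \ne 0$, hence every $Mb_i \ne 0$ by invertibility of $M$, and $\Set{t \given \langle Mb_1, t\rangle = \beta_1}$ is a genuine hyperplane containing $\Set{t \given N(t) \in V}$. In either case the desired set is contained in a subvariety of degree $1$. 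The one point where the hypothesis on $P_1$ is genuinely needed — and the only real obstacle — is ruling out the degenerate scenario where $N(t) \in V$ for every $t$: this would force $V$ to contain the linear span of $\Set{N(t) \given t \in \R^d}$, which equals $\R^{d+1}$ precisely because $M$ is invertible, contradicting properness of $V$.
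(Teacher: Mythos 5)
Your proposal is correct, and it takes a genuinely different and more elementary route than the paper. The paper derives Lemma~\ref{lem:BCCT:n=1} as a special case of the general Lemma~\ref{lem:proj-dim-lower-bd}: one picks a basis $(v_i)$ of $V$ adapted to the splitting $\R^{d+1}=S_1\oplus S_2$, writes $\dim\pi_t(V)$ as the rank of the $d\times\dim V$ matrix $\big(w_i+\nabla P(t)\cdot z_i\big)_i$, and observes that when $H_2:=\dim(V/S_1)$ is $0$ or $1$ (as is automatic for $n=1$) the relevant maximal minors have degree at most $1$ in $t$. Your argument instead works with defining equations for $V$ rather than a basis, and exploits the fact that for $n=1$ the normal space $\ker\pi_t=V(t)^\perp=\R N(t)$ is one-dimensional, so that $\dim\pi_t(V)<\dim V$ if and only if $N(t)\in V$. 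Writing $V$ as an intersection of hyperplanes $\ker\langle a_i,\cdot\rangle$ then turns this into an explicit affine-linear system $\langle Mb_i,t\rangle=\beta_i$, and invertibility of $M$ (equivalently $\rank P_1=d$, equivalently \eqref{bl_assumption} for $n=1$) shows the solution set is either empty or a hyperplane. Both proofs lean on the same nondegeneracy input, but your version makes the geometry of the $n=1$ case transparent at the cost of not generalizing to $n\geq2$, where the normal space is higher-dimensional and the condition $\ker\pi_t\cap V\neq\{0\}$ is no longer captured by a single affine-linear system; the paper's Lemma~\ref{lem:proj-dim-lower-bd} is the uniform tool that handles all $n\leq3$ at once.
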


\begin{proof}[Proof of Lemma~\ref{lem:BCCT:n=1}.]
We may assume $\dim(V)=d$.
The same argument works for all other cases. 
Let $H_{2} := \dim(V/S_{1})$.
If $H_{2} = 0$, then by Lemma~\ref{lem:proj-dim-lower-bd:simple} we have
\[
\dim(\pi_{t}(V)) \geq \dim(V \cap S_{1}) = \dim(V)
\]
for all $t$.
If $H_{2}=1$, then, by Lemma~\ref{lem:proj-dim-lower-bd} with $H_{1}=\dim(V)-1$, we obtain
\[
\dim(\pi_{t}(V)) \geq \dim(V)
\]
for all $t$ outside a subvariety of degree $1$.
\end{proof}

\begin{lemma}\label{180717lem5.2}
Let $n=2$ and $d\ge 3$.
Let $V\subset \R^{d+n}$ be a non-trivial proper linear subspace.
\begin{enumerate}
\item If $1\le \dim(V)\le d-1$, then the set
\begin{equation}
\Set{t \given \dim(\pi_t(V))< \dim(V)}
\end{equation}
is contained in a subvariety of degree $2$.
\item If $d\le \dim(V)\le d+1$, then the set
\begin{equation}
\Set{t \given \dim(\pi_t(V))< \dim(V)-1}
\end{equation}
is contained in a subvariety of degree $2$.
\end{enumerate}
\end{lemma}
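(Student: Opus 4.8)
The plan is to deduce both parts from the two lower bounds on $\dim(\pi_{t}(V))$ already established, namely Lemma~\ref{lem:proj-dim-lower-bd:simple} and Lemma~\ref{lem:proj-dim-lower-bd}, by a short case analysis on the position of $V$ relative to the splitting $\R^{d+n}=S_{1}\oplus S_{2}$. I set $a:=\dim(V\cap S_{1})$ and $b:=\dim(V/S_{1})=\dim(V)-a$, so $0\le a\le d$ and $0\le b\le n=2$. The two lemmas supply: $\dim(\pi_{t}(V))\ge a$ for every $t$ (Lemma~\ref{lem:proj-dim-lower-bd:simple}), and, using hypothesis~\eqref{bl_assumption}, $\dim(\pi_{t}(V))\ge H_{1}+H_{2}$ for all $t$ outside the zero set of a polynomial of degree $H_{2}$, for any $0\le H_{1}\le\min(a,d-2)$ and $0\le H_{2}\le b$ (Lemma~\ref{lem:proj-dim-lower-bd}). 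Since $b\le 2$, every exceptional set produced this way is a subvariety of degree at most $2$, which is exactly the degree bound in the statement.

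For part (1), where $\dim(V)=a+b\le d-1$: if $b=0$ then $V\subset S_{1}$ and Lemma~\ref{lem:proj-dim-lower-bd:simple} already gives $\dim(\pi_{t}(V))\ge a=\dim(V)$ for every $t$; if $b\ge 1$ then $a=\dim(V)-b\le d-2$, so Lemma~\ref{lem:proj-dim-lower-bd} with $H_{1}=a$, $H_{2}=b$ yields $\dim(\pi_{t}(V))\ge\dim(V)$ off a subvariety of degree $b\le 2$. For part (2), where $\dim(V)=a+b\in\{d,d+1\}$, I split according to $a$ (note that $b\le 2$ forces $a\ge d-2$ here). If $a=d$, then $S_{1}\subseteq V$ and Lemma~\ref{lem:proj-dim-lower-bd:simple} gives $\dim(\pi_{t}(V))\ge d\ge\dim(V)-1$ for all $t$. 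If $a\le d-2$ (which forces $b\ge 1$, since $b=0$ would give $a=\dim(V)\ge d$), then Lemma~\ref{lem:proj-dim-lower-bd} with $H_{1}=a$, $H_{2}=b$ even gives $\dim(\pi_{t}(V))\ge\dim(V)$ off a degree-$b$ subvariety. If $a=d-1$, then $\min(a,d-2)=d-2$ and $b=\dim(V)-(d-1)\in\{1,2\}$, so Lemma~\ref{lem:proj-dim-lower-bd} with $H_{1}=d-2$, $H_{2}=b$ gives $\dim(\pi_{t}(V))\ge d-2+b=\dim(V)-1$ off a subvariety of degree $b\le 2$.

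The step needing the most attention is the regime $\dim(V\cap S_{1})\in\{d-1,d\}$: there the cap $H_{1}\le\min(\dim(V\cap S_{1}),d-n)$ in Lemma~\ref{lem:proj-dim-lower-bd} keeps us from reaching $\dim(\pi_{t}(V))\ge\dim(V)$, which is precisely why the conclusion of part (2) is stated with a deficit of one, and why one must fall back on the cap-free Lemma~\ref{lem:proj-dim-lower-bd:simple} once $V$ contains $S_{1}$. Beyond bookkeeping the admissible values of $H_{1}$ and $H_{2}$ — and observing that $H_{2}\le b\le n=2$ keeps the exceptional degree at most $2$ — the argument requires no further linear-algebra input.
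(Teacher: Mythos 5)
Your proof is correct and follows essentially the same route as the paper: set $H_{2}=\dim(V/S_{1})$, handle $H_{2}=0$ (equivalently $V\subset S_{1}$) by Lemma~\ref{lem:proj-dim-lower-bd:simple}, apply Lemma~\ref{lem:proj-dim-lower-bd} with the largest admissible $H_{1}$ when $H_{2}\ge 1$, and treat the one case where the cap $H_{1}\le d-2$ falls short ($S_{1}\subset V$) by reverting to Lemma~\ref{lem:proj-dim-lower-bd:simple}. The paper organizes the case split by $H_{2}$ and uses $H_{1}=\min(\dim V-H_{2},d-2)$ uniformly, while you split by $\dim(V\cap S_{1})$, but the two parameterizations are equivalent and the underlying argument is identical.
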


\begin{proof}[Proof of Lemma~\ref{180717lem5.2}.]
Let $H_{2} := \dim(V/S_{1}) \leq 2$.
If $H_{2}=0$, then by Lemma~\ref{lem:proj-dim-lower-bd:simple} we have $\dim(\pi_{t}(V))=\dim V$ for all $t$.
Otherwise, by Lemma~\ref{lem:proj-dim-lower-bd} with $H_{1}=\min(\dim(V)-H_{2},d-2)$, we obtain
\[
\dim(\pi_{t}(V)) \geq \min(\dim(V),d-2+H_{2})
\]
for all $t$ outside some subvariety of degree $\leq 2$.
This gives the claim unless $H_{2}=1$, $\dim(V)=d+1$.
But in this case $S_{1} \subset V$, so $\dim(\pi_{t}(V)) \geq d$ for all $t$ by Lemma~\ref{lem:proj-dim-lower-bd:simple}.
\end{proof}

\begin{lemma}\label{181203lemma6.4}
Let $d=n\ge 2$ and $V \subset \R^{d+n}$.
\begin{enumerate}
\item If $\dim(V)$ is odd, then
\begin{equation}
\Set{t \given \dim(\pi_t(V)) < (\dim(V)+1)/2}
\end{equation}
is contained in a subvariety of degree at most $d$.
\item If $\dim(V)$ is even, then either
\begin{equation}
\Set{t \given \dim(\pi_t(V)) < \dim(V)/2 + 1}
\end{equation}
is contained in a subvariety of degree at most $d$, or
\[
\dim(\pi_{t}(V)) \geq \dim(V)/2
\]
for all $t$.
\end{enumerate}
\end{lemma}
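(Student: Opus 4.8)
\subsection*{Proof proposal for Lemma~\ref{181203lemma6.4}}

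The plan is to reduce everything to the two lower bounds on $\dim(\pi_t(V))$ that are already available: Lemma~\ref{lem:proj-dim-lower-bd:simple} and Lemma~\ref{lem:proj-dim-lower-bd} (the latter using the standing hypothesis~\eqref{bl_assumption}). Write $a := \dim(V\cap S_1)$ and $b := \dim(V/S_1)$, so that $\dim(V) = a + b$; since $\R^{d+n}/S_1 \cong S_2 \cong \R^n$ and $n = d$, we get $b \le d$ for free. From Lemma~\ref{lem:proj-dim-lower-bd:simple}, $\dim(\pi_t(V)) \ge a$ for every $t\in\R^d$. In the present case $d = n$, the parameter $H_1$ in Lemma~\ref{lem:proj-dim-lower-bd} is forced to equal $0$, so that lemma, applied with $H_1 = 0$ and $H_2 = b$, yields only $\dim(\pi_t(V)) \ge b$, but now for all $t$ outside the zero set of a single non-zero polynomial of degree at most $b \le d$. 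Combining the two, $\dim(\pi_t(V)) \ge \max(a,b)$ outside a subvariety of degree at most $d$, and $\max(a,b) \ge \lceil \dim(V)/2\rceil$ because $a + b = \dim(V)$.

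If $\dim(V)$ is odd, then $\lceil \dim(V)/2\rceil = (\dim(V)+1)/2$, and the previous paragraph already gives case~(1): when $a$ achieves the maximum, $\dim(\pi_t(V)) \ge (\dim(V)+1)/2$ for all $t$; when $b$ achieves it, the bound holds off a subvariety of degree at most $d$ (if that exceptional set is empty, or the relevant polynomial is constant, replace it by any non-zero polynomial of degree $\le d$).

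If $\dim(V)$ is even, then either $\max(a,b) \ge \dim(V)/2 + 1$ or $a = b = \dim(V)/2$; there is no other possibility since $a + b = \dim(V)$. In the first subcase the same dichotomy places $\Set{t \given \dim(\pi_t(V)) < \dim(V)/2 + 1}$ inside a subvariety of degree at most $d$ (it is empty when $a$ achieves the maximum). In the second subcase $\dim(\pi_t(V)) \ge a = \dim(V)/2$ for every $t$ by Lemma~\ref{lem:proj-dim-lower-bd:simple}, which is exactly the second alternative of case~(2). The whole argument is a short bookkeeping of the two lemmas, so I do not anticipate a real obstacle; the only care needed is in the degree count -- that the obstructing polynomial coming from Lemma~\ref{lem:proj-dim-lower-bd} has degree at most $d$, which is precisely where $b = \dim(V/S_1)\le n = d$ enters -- and in the harmless degenerate configurations ($b = 0$, $V = 0$, $V = \R^{d+n}$, or an empty exceptional set), all of which are absorbed by the $a = b$ alternative or by choosing the obstructing polynomial freely.
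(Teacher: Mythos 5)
Your proposal is correct and uses essentially the same argument as the paper: set $H_1=0$ (forced since $d-n=0$), bound $\dim(\pi_t(V))$ from below by $\dim(V\cap S_1)$ (via Lemma~\ref{lem:proj-dim-lower-bd:simple}, valid for all $t$) and by $\dim(V/S_1)$ (via Lemma~\ref{lem:proj-dim-lower-bd}, valid off a subvariety of degree $\le n=d$), then split on which of the two is $\ge\lceil\dim(V)/2\rceil$. The paper phrases the case split as $\dim(V/S_1)>\dim(V)/2$ versus $\dim(V/S_1)\le\dim(V)/2$, which is the same dichotomy; your write-up is a faithful, if slightly more verbose, rendering of it.
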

\begin{proof}[Proof of Lemma~\ref{181203lemma6.4}.]
Let $H_{2}:=\dim(V/S_{1})$.
If $H_{2} > \dim(V)/2$, then $H_{2}\geq (\dim(V)+1)/2$ for $\dim(V)$ odd, and $H_{2}\geq \dim(V)/2+1$ for $\dim(V)$ even.
By Lemma~\ref{lem:proj-dim-lower-bd} with $H_{1}=0$, we obtain the claim in this case.

If $H_{2} \leq \dim(V)/2$, then $\dim(V\cap S_{1}) \geq \dim(V)/2$, so, by Lemma~\ref{lem:proj-dim-lower-bd:simple}, we obtain $\dim(\pi_{t}(V)) \geq \dim(V)/2$ for all $t$.
A case distinction between $\dim(V)$ odd and even finishes the proof.
\end{proof}

\appendix
\section{Simultaneously diagonalizable forms}\label{sec:simul-diag}
Here we prove Lemma~\ref{lem:simul-diag}.
The case $d=2$ is trivial.
The case $d=3$ is contained in Demeter, Guo, and Shi \cite[Corollary 1.2]{MR3945730}.
Therefore, in this section we work with the case $d=4$.

Let us first verify the condition \eqref{bl_assumption}.
Take two linearly independent vectors $\vec{u}, \vec{v}\in \R^4$ with $\vec{u}=(u_1, \dots, u_4)$ and $\vec{v}=(v_1, \dots, v_4)$.
We need to show that
\begin{equation}
\det\begin{bmatrix}
a_1 t_1 & a_2 t_2 & a_3 t_3 & a_4 t_4\\
b_1 t_1 & b_2 t_2 & b_3 t_3 & b_4 t_4\\
u_1 & u_2 & u_3 & u_4\\
v_1 & v_2 & v_3 & v_4
\end{bmatrix}
\end{equation}
does not vanish constantly, when viewed as a polynomial in $t$.
We argue by contradiction and assume that this determinant vanishes constantly.
Then it is not difficult to see, via calculating this determinant directly, that
\begin{equation}
\det \begin{bmatrix}
u_i & u_j\\
v_i & v_j
\end{bmatrix}
=0 \text{ for every } i<j.
\end{equation}
This further implies that $u$ and $v$ are linearly dependent, which is a contradiction.

Next we verify the condition \eqref{low_dim_assumption}.
We argue by contradiction and assume that there exists a hyperplane $H\subset \R^d$ such that
\begin{equation}\label{181208e2.2}
\max\Set{\rank (P|_H), \rank (Q|_H)}\le 1.
\end{equation}
Define two diagonal matrices
\begin{equation}
M_1:=\diag(a_1, a_2, a_3, a_4) \text{ and } M_2:=\diag(b_1, b_2, b_3, b_4).
\end{equation}
The assumption \eqref{181208e2.2} implies that
\begin{equation}\label{181208e2.4}
\max\Set{ \rank (LM_1 L^T), \rank (LM_2 L^T)}\le 1,
\end{equation}
for some $3\times 4$ matrix $L$ of a full rank.
Multiplying $L$ by an invertible $3\times 3$ matrix on the left and a permutation $4\times 4$ matrix on the right, and reordering $a_{j}$'s and $b_{j}$'s, we may assume that
\begin{equation}
L=
\begin{bmatrix}
1 & 0 & 0 & \lambda_1\\
0 & 1 & 0 & \lambda_2\\
0 & 0 & 1 & \lambda_3
\end{bmatrix}
\end{equation}
for some $\lambda_i$.
Then
\begin{equation}\label{180524e2.25}
L M_1 L^T=
\begin{bmatrix}
a_1 & 0 & 0\\
0 & a_2 & 0\\
0 & 0 & a_3
\end{bmatrix}
+ a_4 (\lambda_1, \lambda_2, \lambda_3)^T (\lambda_1, \lambda_2, \lambda_3)
\end{equation}
and
\begin{equation}\label{180524e2.26}
L M_2 L^T=
\begin{bmatrix}
b_1 & 0 & 0\\
0 & b_2 & 0\\
0 & 0 & b_3
\end{bmatrix}
+ b_4 (\lambda_1, \lambda_2, \lambda_3)^T (\lambda_1, \lambda_2, \lambda_3)
\end{equation}
Notice that
\begin{equation}\label{rank_triangle}
\rank (A+B)\le \rank (A)+\rank (B),
\end{equation}
for two arbitrary matrices and
\begin{equation}
\rank \Big((\lambda_1, \lambda_2, \lambda_3)^T (\lambda_1, \lambda_2, \lambda_3)\Big)\le 1.
\end{equation}
These two facts, combined with \eqref{181208e2.4}, imply that $a_1 a_2 a_3=0$ and $b_1 b_2 b_3=0$.
By \eqref{180713e1.9}, at most one of $a_{1},\dotsc,a_{4}$ can be $0$, so we may assume without loss of generality $a_{3}=0$.
Again by \eqref{180713e1.9}, at most one of $b_{1},\dotsc,b_{4}$ can be $0$, and if $a_{3}=0$ then $b_{3} \neq 0$, so we may assume without loss of generality $b_{2}=0$.
Two minor determinants of order $2\times 2$ of \eqref{180524e2.25} are
\begin{equation}
a_1 a_4 \lambda_3^2 \text{ and } a_2 a_4 \lambda_3^2.
\end{equation}
Hence, we must have $\lambda_3=0$, otherwise we have a contradiction to \eqref{181208e2.4}.
By a similar argument applied to $M_2$, we must have $\lambda_2=0$.
So far, we have obtained
\begin{equation}\label{180524e2.25a}
L M_1 L^T=
\begin{bmatrix}
a_1+a_4 \lambda_1^2 & 0 & 0\\
0 & a_2 & 0\\
0 & 0 & 0
\end{bmatrix}
\text{ and }
L M_2 L^T=
\begin{bmatrix}
b_1+b_4 \lambda_1^2 & 0 & 0\\
0 & 0 & 0\\
0 & 0 & b_3
\end{bmatrix}
\end{equation}
Since from \eqref{180713e1.9} we know $a_2\neq 0$ and $b_3\neq 0$, by \eqref{181208e2.4} we obtain
\begin{equation}
a_1+a_4 \lambda_1^2=b_1+b_4\lambda^2_1=0.
\end{equation}
This is a contradiction to \eqref{180713e1.9}.

\printbibliography

\end{document}